\newcommand{\tomemail}{\href{mailto:tom.bachmann@zoho.com}{tom.bachmann@zoho.com}}
\ifdefvoid{\longdocument}{
\newtheorem*{defn}{Definition}
\newtheorem{prop}{Proposition}
}{
\newtheorem{prop}{Proposition}[chapter]

}
\newtheorem{corr}[prop]{Corollary}
\newtheorem{lemm}[prop]{Lemma}
\newtheorem{thm}[prop]{Theorem}
\newtheorem*{thm*}{Theorem}
\newtheorem*{corr*}{Corollary}
\newtheorem*{prop*}{Proposition}
\newtheorem*{lemm*}{Lemma}
\theoremstyle{definition}
\theoremstyle{remark}
\newcommand{\gmtw}[1]{\langle #1 \rangle}
\newcommand{\Aone}{{\mathbb{A}^1}}
\newcommand{\Gm}{{\mathbb{G}_m}}
\newcommand{\id}{\operatorname{id}}
\newcommand{\ZZ}{\mathbb{Z}}
\newcommand{\NN}{\mathbb{N}}
\newcommand{\DM}{\mathbf{DM}}
\newcommand{\HI}{\mathbf{HI}}
\newcommand{\SH}{\mathbf{SH}}
\newcommand{\iHom}{\ul{\operatorname{Hom}}}
\newcommand{\Hom}{\operatorname{Hom}}
\newcommand{\colim}{\operatorname{colim}}
\newcommand{\hocolim}{\operatorname{hocolim}}
\newcommand{\RR}{\mathbb{R}}
\newcommand{\Mod}{\textbf{-Mod}}
\DeclareRobustCommand{\ul}{\underline}
\newcommand{\heart}{\heartsuit}
\newcommand{\tunit}{\mathbbm{1}}
\newcommand{\ret}{{r\acute{e}t}}
\newcommand{\iso}{\cong}
\newcommand{\wequi}{\simeq}
\def\Aone{\mathbb{A}^1}
\title{On the Conservativity of the Functor Assigning to a Motivic Spectrum its
       Motive}
\author{Tom Bachmann \\ \tomemail}
\begin{document}
\maketitle

\begin{abstract}
Given a $0$-connective motivic spectrum $E \in \SH(k)$ over a perfect
field $k$, we determine $\ul h_0$ of the associated motive $ME \in \DM(k)$ in
terms of $\ul \pi_0(E)$.
Using this we show that if $k$ has finite $2$-étale cohomological dimension,
then the functor $M: \SH(k) \to \DM(k)$ 
is conservative when restricted to the subcategory of compact spectra,
and induces an injection on Picard groups. We extend the conservativity result
to fields of finite \emph{virtual} 2-étale cohomological dimension by
considering what we call ``real motives''.
\end{abstract}

\section{Introduction}
One of the starting points of classical stable homotopy theory is the \emph{Hurewicz
theorem}: we call a spectrum $E \in \SH$ \emph{0-connective} if $\pi_i (E) = 0$ for all
$i < 0$, where $\pi_i$ denote the stable homotopy groups. We may also associate
with $E$ its homology groups $H_i(E) := H_i(C_*E)$, where $C_* E$ denotes the
singular chain complex. The Hurewicz Theorem states
that if $E$ is 0-connective then $H_i(E) = 0$ for all $i < 0$ and that there is
a natural isomorphism $\pi_0(E) \iso H_0(E)$.

This has several nice consequences. For example, if $E
\in \SH$ is a connective spectrum (has only finitely many non-vanishing negative
stable homotopy groups) and $C_* E \wequi 0$, then actually $E \simeq 0$. We
call this the \emph{conservativity theorem}.

For another example, recall that a spectrum $E$ is called \emph{invertible} if there exists
a spectrum $F$ such that $E \wedge F \wequi S$ ($S$ the sphere spectrum). The
set of equivalence classes of invertible spectra forms an abelian group denoted
$Pic(\SH)$; similarly we have the group $Pic(D(Ab))$. The functor $C_*$ induces
a homomorphism $Pic(\SH) \to Pic(D(Ab))$ which is \emph{injective}. (It follows
then quickly that it is an isomorphism, since $Pic(D(Ab))$ is easy to
determine.) We call this the \emph{Pic-injectivity theorem}.

We wish to investigate how to extend these results to the motivic world. Recall
that in this world, for every field $k$ (at least) there is defined a category
$\SH(k)$ analogous to $\SH$, but starting with smooth varieties as building
blocks instead of topological spaces. In complete analogy with the classical
situation, Morel, Ayoub and others have
defined and studied a category $D_\Aone(k)$ and a functor $C_*: \SH(k) \to
D_\Aone(k)$; see for example \cite[Section 5.2]{morel2004motivic-pi0}. Both of the
categories $\SH(k)$ and $D_\Aone(k)$ carry a $t$-structure (known as the
\emph{homotopy $t$-structure}, not the conjectural motivic $t$-structure),
and $C_*$ induces
an isomorphisms of the hearts. (See Section \ref{sec:abstract-hurewicz} for our
conventions regarding $t$-structures.) Let us write $\ul \pi_i(E)$ for the homotopy
objects of a spectrum $E \in \SH(k)$ in this $t$-structure, and $\ul h'_i(E)$ for
the homotopy objects of $C_* E$. Then Morel has proved that for 0-connective $E$
one has $\ul \pi_0(E) \iso \ul h'_0(E)$ \cite[Theorem 6.37]{A1-alg-top}. This is a
perfect analog of the classical Hurewicz theorem, and the conservativity and
Pic-injectivity theorems follow in the same way as classically.

The problem with this formulation is that the category $D_\Aone(k)$ is not yet
very well understood. Moreover Voevodsky has constructed the category $\DM(k)$
which some may consider to be a more accessible target category for a Hurewicz
Theorem. But even
though $\DM(k)$ has a $t$-structure, and there is a nice functor $M: \SH(k)
\to \DM(k)$, the hearts of $\SH(k)$ and $\DM(k)$ are definitely \emph{not}
equivalent, so some new idea is needed.
Write $\ul h_i(E)$ for the homotopy objects of $ME$ (in the homotopy
$t$-structure on $\DM(k)$) for $E \in \SH(k)$.
Suppose $E \in \SH(k)$ is 0-connective. Our first contribution is to identify
$\ul h_0(E)$.

\begin{thm*}[Motivic Hurewicz Theorem; see Theorem \ref{thm:hurewicz}]
Let $E \in \SH(k)$ be 0-connective. Then in notation to be explained later, we
have
\[ \ul h_0(E) \iso \ul \pi_0(E)/\eta. \]
\end{thm*}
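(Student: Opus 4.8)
The plan is to reduce everything to a computation on the hearts of the homotopy $t$-structures. First I would record that $M\colon \SH(k)\to\DM(k)$ is right $t$-exact: it is a left adjoint, and it carries the connective generators $\Sigma^\infty_+ X\wedge\Gm^{\wedge q}$ to the motives $M(X)(q)[q]$, which are $0$-connective. Hence for $0$-connective $E$ the fibre sequence $\tau_{\ge 1}E\to E\to\ul\pi_0(E)$ maps under $M$ to a fibre sequence in which $M\tau_{\ge 1}E$ is $1$-connective, so the long exact sequence of $\ul h_*$ gives a natural isomorphism $\ul h_0(ME)\cong\ul h_0(M\ul\pi_0(E))$. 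This reduces the theorem to the statement that $\ul h_0(MN)\cong N/\eta$ for every homotopy module $N\in\SH(k)^{\heart}$, where $N/\eta$ denotes the cokernel of $\eta\colon N_{*+1}\to N_*$, equipped with its canonical transfer structure (using that a homotopy module on which $\eta$ acts by zero carries unique transfers, after Morel).

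Next I would analyse the restriction $\ul h_0M\colon\SH(k)^{\heart}\to\DM(k)^{\heart}$. By right $t$-exactness of $M$ this is well defined; it preserves colimits (being $\tau_{\le 0}$ composed with the colimit-preserving $M$ on connective objects); and since $M$ is symmetric monoidal and $\otimes$ is right $t$-exact on both sides, $\ul h_0M$ is symmetric monoidal, carrying the unit $\ul\pi_0(\tunit)=\ul K^{MW}_*$ to the unit $\ul h_0(M\tunit)=\ul K^{M}_*$ of $\DM(k)^{\heart}$, compatibly with the quotient map $\ul K^{MW}_*\to\ul K^{MW}_*/\eta=\ul K^{M}_*$. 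Its right adjoint is $\ul\pi_0\circ M^{R}$ restricted to $\DM(k)^{\heart}$, where $M^{R}$ is the right adjoint of $M$; since the homotopy $t$-structure on $\DM(k)$ is the one induced from $\SH(k)$ along the conservative functor $M^{R}$ (equivalently, $M^{R}$ is $t$-exact), this right adjoint is precisely the functor forgetting transfers. Now Morel's description of $\SH(k)^{\heart}$ as homotopy modules and Déglise's description of $\DM(k)^{\heart}$ as homotopy modules with transfers, together with the fact that this forgetful functor is fully faithful with essential image the homotopy modules killed by $\eta$, identify its left adjoint with $N\mapsto N/\eta$. By uniqueness of left adjoints, $\ul h_0M|_{\heart}\simeq(-)/\eta$, and combining with the reduction above finishes the proof.

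The main obstacle is the second and third steps: pinning down $M$ and $M^{R}$ on the two hearts. These are genuinely different abelian categories, and the argument rests on several non-formal inputs — Morel's computation $\ul\pi_0(\tunit)_*=\ul K^{MW}_*$, the identification of $\DM(k)^{\heart}$ with homotopy invariant Nisnevich sheaves with transfers, and above all the equivalence between homotopy modules with transfers and $\eta$-trivial homotopy modules — which I would need either to cite cleanly or to reprove in the present setting. There is also the standing caveat that over a field of positive characteristic $p$ one must work with $\ZZ[1/p]$-coefficients, or invoke the more recent foundational results that make $\DM(k)$ and the comparison functor $M$ behave as in characteristic $0$; I would state the precise hypotheses there. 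By contrast the ``Hurewicz'' content itself — that no higher Tor terms intervene in $\ul h_0$ — is automatic from right $t$-exactness and requires no work.
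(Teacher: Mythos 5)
Your proposal is correct and follows essentially the same route as the paper: reduce to the hearts via right-$t$-exactness of $M$ (your fibre-sequence argument is just the paper's abstract Hurewicz step in long-exact-sequence form), then identify $M^\heart$ using the $t$-exactness of $U$ and D\'eglise's equivalence $\DM(k)^\heart \simeq \SH(k)^{\heart,\eta=0}$. Your use of uniqueness of left adjoints (comparing with the left adjoint $N \mapsto N/\eta$ of the inclusion of $\eta$-trivial homotopy modules) is only a repackaging of the paper's explicit right-exactness computation in Corollary \ref{corr:Mheart}, resting on the same non-formal inputs you correctly flag.
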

Here we use implicitly that the heart of $\DM(k)$ can be identified with a
subcategory of the heart of $\SH(k)$. This theorem shows that we lose some
information in passing to the motive, and so cannot expect a perfect analogy
with the classical situation. Nonetheless we can prove the following.

\begin{thm*}[Conservativity Theorem I; see Theorem \ref{thm:conservativity-I}]
Let $k$ be a perfect field of finite 2-étale cohomological dimension and $E \in
\SH(k)$ be compact. If $ME \wequi 0$ then $E \simeq 0$.
\end{thm*}
(Recall that an object is called compact if $\Hom_{\SH(k)}(E, \bullet)$ commutes
with arbitrary sums.) Combined with the Motivic Hurewicz Theorem, one easily
obtains the following.
\begin{corr*}[Pic-injectivity Theorem; see Theorem \ref{thm:pic-injectivity}]
In the situation of the Theorem, the natural homomorphism $Pic(\SH(k)) \to
Pic(\DM(k))$ is injective.
\end{corr*}

This result was one of the main motivations for our investigations. See later in
this introduction for more details on applications. (We do not know if $Pic(\SH(k)) \to
Pic(\DM(k))$ might also be surjective.)

The Conservativity Theorem (and by extension the Pic-injectivity Theorem)
stated here is not optimal. Let us discuss to what extent the
assumptions can be weakened.

In
characteristic zero, the compactness assumption can be replaced by the technical
assumptions of connectivity and ``slice-connectivity'' (see Section
\ref{sec:homotopy-modules} for a definition of this term). In characteristic $p>0$ we
can also replace compactness by connectivity and slice-connectivity, but in this
case we must additionally assume that the natural map $E \xrightarrow{p} E$ is
an isomorphism.

If $k$ is a non-orderable field (i.e. one in which -1 is a sum of squares; we
still assume $k$ perfect), then
one may present $k$ as a colimit of perfect subfields with finite 2-étale
cohomological dimension, and the theorem can then be deduced for such $k$,
see \cite{bondarko-effectivity}. Compactness instead of connectivity and
slice-connectivity is then essential, however.

The author currently does not know how to weaken the perfectness assumption on $k$.

A further weakening is to consider orderable $k$, i.e. fields
where -1 is \emph{not} a sum of squares. (Such fields necessarily have
characteristic zero.) Assume for simplicity that for any ordering of $k$, there
is an order preserving embedding of $k$ into $\RR$.
Write $Sper(k)$ for the set of orderings of $k$. (Equivalently in this case,
embeddings $k \hookrightarrow \RR$.) If $\sigma \in Sper(k)$, then there is a so-called
\emph{real realisation} functor $R_\sigma^\RR: \SH(k) \to \SH$, related to
considering the real points of a smooth variety, with their strong topology. We
compose this with the singular chain complex functor to obtain the \emph{real
motive} $M_\sigma^\RR: \SH(k) \to D(Ab)$. We can further consider homology with
$\ZZ[1/2]$ coefficients and obtain $M_\sigma^\RR[1/2]: \SH(k) \to D(\ZZ[1/2])$. We may
then prove the following.

\begin{thm*}[Conservativity Theorem II; see Theorem \ref{thm:conservativity-II}]
Let $k$ be a field of finite virtual 2-étale cohomological
dimension with real embeddings for all orderings, as above.
Let $E \in \SH(k)$ be connective and slice-connective. If $0
\wequi ME \in \DM(k)$ and additionally for all $\sigma \in Sper(k)$, $0 \simeq
M_\sigma^\RR[1/2](E) \in D(\ZZ[1/2])$, then in fact $E \wequi 0$.
\end{thm*}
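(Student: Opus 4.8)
The plan is to reduce the orderable case to the already-established conservativity result over fields of finite $2$-étale cohomological dimension (Theorem \ref{thm:conservativity-I}) by inverting $2$ and separately analysing the rest. First I would exploit the arithmetic fracture square for the prime $2$: for a connective and slice-connective $E \in \SH(k)$, knowledge of $E[1/2]$ and of $E/2$ (or equivalently $E^\wedge_2$) determines $E$. So it suffices to show (a) $E[1/2] \simeq 0$ and (b) $E/2 \simeq 0$, using the two hypotheses $ME \simeq 0$ and $M_\sigma^\RR[1/2](E) \simeq 0$ for all $\sigma \in Sper(k)$.

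For part (b), the key point is that after $2$-completion the real realisations see nothing: the real étale / $\rho$-inverted world is "invisible mod $2$", so that modulo $2$ the field $k$ behaves as if it had finite $2$-étale cohomological dimension. Concretely, $\operatorname{vcd}_2(k) < \infty$ means $cd_2(k(\sqrt{-1})) < \infty$, and one can run the argument of Theorem \ref{thm:conservativity-I} after base change to $k(\sqrt{-1})$ together with a transfer/descent argument along the degree-$2$ extension $k(\sqrt{-1})/k$, which is harmless mod $2$ only if one is careful — actually the cleaner route is: the hypothesis $ME \simeq 0$ gives $\ul h_0(E) = \ul\pi_0(E)/\eta = 0$ by the Hurewicz theorem (Theorem \ref{thm:hurewicz}), hence $\ul\pi_0(E)$ is $\eta$-divisible; combined with connectivity this forces, after $2$-completion where $\eta$ is nilpotent, that $\ul\pi_0(E)/2 = 0$ and inductively (via the slices and Morel's $t$-structure) that $E/2$ is highly connected, i.e. $E/2 \simeq 0$. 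Here slice-connectivity is what lets the induction on homotopy objects close up.

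For part (a), over $\ZZ[1/2]$ the category $\SH(k)[1/2]$ splits into a plus part and a minus part, $\SH(k)[1/2] \simeq \SH(k)[1/2]_+ \times \SH(k)[1/2]_-$, where the idempotents are built from $\epsilon = -\langle -1\rangle$. On the plus part, the motive functor $M$ is (rationally/after inverting $2$, on the plus summand) an equivalence onto $\DM(k)[1/2]$ — this is essentially the statement that $\DM$ captures the "$+$" part — so $ME \simeq 0$ kills $E[1/2]_+$. On the minus part, $\eta$ is invertible and $\SH(k)[1/2]_-$ is equivalent, via real realisation, to a product of copies of $\SH[1/2]$ indexed by $Sper(k)$ (this is the Bachmann/real-étale rigidity statement, a variant of Röndigs–Østvær rigidity reproved in this paper); under this equivalence $E[1/2]_-$ corresponds to the tuple $(M_\sigma^\RR[1/2](E))_\sigma$, which vanishes by hypothesis. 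Hence $E[1/2]_- \simeq 0$, and combining, $E[1/2] \simeq 0$.

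The main obstacle, and where I would spend the most care, is part (b): making precise that "mod $2$, finite virtual $2$-cohomological dimension behaves like finite $2$-cohomological dimension," and in particular controlling the passage through the quadratic extension $k(\sqrt{-1})/k$ or, equivalently, controlling the $2$-completed homotopy objects using only the vanishing of $\ul h_0$. One must check that the Hurewicz input $\ul\pi_0(E)/\eta = 0$ together with slice-connectivity genuinely propagates to vanishing of all $\ul h_i(E/2)$ and then invoke a mod-$2$ conservativity statement for $M$; this is where connectivity plus slice-connectivity (rather than compactness) is essential, since the inductive argument builds $E$ from its slices and needs the $t$-structure to be well-behaved under $2$-completion. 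Assembling (a) and (b) via the fracture square then yields $E \simeq 0$.
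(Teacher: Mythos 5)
Your reduction to (a) $E[1/2]\simeq 0$ and (b) $E/2\simeq 0$ is in itself parallel to the paper's route (note $E/2\simeq 0$ just says $2$ is invertible on $E$, which the paper extracts from Bondarko's Lemma \ref{lemm:bondarko} via Corollary \ref{corr:M-SH-}), but your mechanism for (b) has a genuine gap. The claim that ``after $2$-completion $\eta$ is nilpotent'', so that $\eta$-surjectivity of $\ul\pi_0(E)$ (from the Hurewicz theorem) forces $\ul\pi_0(E)/2=0$, is unjustified and false as stated: $\eta$ is not nilpotent on $2$-complete or mod-$2$ motivic spectra (the $\eta$-inverted $2$-complete sphere is already nonzero over $\CC$), and nothing in your hypotheses makes it act nilpotently on $E/2$. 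The actual argument has to use finiteness of $vcd_2$, which your sketch never does: for a $2^n$-torsion, connective, slice-connective spectrum with vanishing motive one combines $\eta$-surjectivity with Levine's slice theorem (Theorem \ref{thm:levine-slice}) to express $\ul\pi_0(E)_{N+r}$ over fields as $(\ul I^{\,r}\,\ul\pi_0(E)_{N+r})^{tr}$, and then invokes $I(L)^r=2^nI(L)^{r-n}$ for $r>vcd_2(K)+n$ (Elman--Lam/Arason) to kill these $2^n$-torsion groups; this is exactly Lemma \ref{lemm:bondarko}, and without that ingredient step (b) is not proved.

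Part (a) also has problems. On the plus part, $M:\SH(k)[1/2]_+\to\DM(k,\ZZ[1/2])$ is not an equivalence (that is only true rationally); what is true, and suffices, is conservativity on connective objects via Corollary \ref{corr:hurewicz-conservativity} and Theorem \ref{thm:deglise}, as in the proof of Corollary \ref{corr:M-SH-}. On the minus part you invoke an equivalence of $\SH(k)[1/2]_-$ with copies of $\SH[1/2]$ over $Sper(k)$ via real realisation; that real-\'etale rigidity statement is not proved in this paper (the rigidity reproved here, Corollary \ref{corr:rigidity}, concerns torsion homotopy modules over algebraically closed extensions). The paper instead proves and uses a weaker chain: $F:\SH(k)_2^-\to EMW_k[1/2]\Mod$ is conservative on connective objects (Lemma \ref{lemm:F-conservativity}); $EMW_k[1/2]\Mod$ satisfies r\'et-descent, so base change to the real closures $k_\sigma$ is jointly conservative (Corollary \ref{corr:W-basechange-cons}); and over a real closed field $EMW[1/2]\Mod\simeq D(\ZZ[1/2])$, the composite being by definition $M_\sigma^\RR[1/2]$ (Corollary \ref{corr:W-real-closed-comparison}). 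Finally, even granting your stronger equivalence, the hypothesis only gives vanishing of $M_\sigma^\RR[1/2](E)$, i.e.\ of the homology of the realisation, not of the realisation itself; to close that gap you would still need connectivity of the realisation together with the classical Hurewicz/conservativity of $C_*$, a step your sketch omits and which the paper's formulation through $EMW$-modules avoids entirely.
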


We point out right away that this theorem does not include a Pic-injectivity
result; in the current form such a result seems unlikely to hold. The conditions
on $k$ can again be relaxed. It turns out that for any orderable field $k$ and
ordering $\sigma \in Sper(k)$, one may define a functor $M_\sigma^\RR[1/2]: \SH(k)
\to D(\ZZ[1/2])$ analogous to the real motive functors for $k \subset \RR$; we
still call them real motives. The theorem holds for all $k$ of characteristic
zero and finite virtual 2-étale cohomological dimension, with the new definition of
$M_\sigma^\RR[1/2]$. Details are explained before the proof of Theorem
\ref{thm:conservativity-II}. One may also show that any orderable field is a
colimit of orderable fields of finite virtual 2-étale cohomological dimension, so the
theorem holds for arbitrary orderable fields, assuming that $E$ is
\emph{compact}. See again Bondarko's work \cite{bondarko-effectivity}.

A word on applications. In the classical situation, Pic-injectivity immediately
implies that $Pic(\SH) = \ZZ.$ In the motivic case, results are not nearly as
strong, mainly because not much is known about $Pic(\DM(k)).$ However, combining
conservativity and Pic-injectivity, many problems about (potentially) invertible
objects in $\SH(k)$ can be reduced to analogous questions in $\DM(k)$. A
prototypical example is as follows (the restriction to $k$ of characteristic
zero can be overcome by inverting the exponential characteristic, we omit this
here to simplify exposition):

\begin{prop*} Let $E, F \in \SH(k)$ be compact and suppose that $k$ is of
finite 2-étale cohomological dimension and characteristic zero.
Then $E, F$ are isomorphic invertible objects if and only if $ME, MF$
are.
\end{prop*}
\begin{proof}
It is clear that if $E \wequi F$ are invertible, then so are $ME, MF.$ We show
the converse.

Since $M$ is a monoidal functor between rigid monoidal categories, it preserves
duals. Consider the natural map $DE \wedge E \xrightarrow{\alpha}
\tunit_{\SH(k)}.$ If $ME$ is invertible, then $M\alpha$ is an isomorphism, and
hence so is $\alpha,$ by conservativity. Thus $E$ is invertible. Similarly for
$F.$ But now $E, F \in Pic(\SH(k))$ and $E \wequi F$ if and only if $ME \wequi
MF,$ by Pic-injectivity.
\end{proof}

The conjectures of Po Hu on invertibility of Pfister quadrics, as stated in
\cite[Conjecture 1.4]{HuPicard}, can be cast in this form. We shall establish
the analog of Hu's conjecture in $\DM(k)$ in a forthcoming article \cite{bachmann-quadrics}.
Together with the results of this work, this establishes Hu's conjecture for $\SH(k)$
whenever $k$ has finite $2$-étale cohomological dimension.

If one only wishes to show that an object is invertible, we do not need
Pic-injectivity. For example, one idea is that the reduced suspension
spectrum of any smooth affine quadric $Q$ should yield an invertible object in
$\SH(k)$. Indeed this holds true under real and complex realisation, and also
follows for Pfister Quadrics from Po Hu's conjectures. Additionally it is true in
the étale topology.

By arguments as in the above proposition, using the two conservativity
theorems, it follows that it suffices to show that the reduced motive $\tilde{M}Q$
is invertible
and that for every ordering $\sigma$ of $k$, the reduced real motive
$\tilde{M}_\sigma^\RR[1/2] Q$ is
invertible. (For a variety $X/k$ we have the reduced suspension spectrum
$\tilde{\Sigma}^\infty X := hofib(\Sigma^\infty X_+ \to \Sigma^\infty
Spec(k)_+)$ and $\tilde{M}X := M(\tilde{\Sigma}^\infty X),
\tilde{M}_\sigma^\RR[1/2] X :=  M_\sigma^\RR[1/2](\tilde{\Sigma}^\infty X)$.)
The former question is purely about motives, and dealt with in the
same forthcoming work mentioned above. The latter question is purely topological
and very easy. Thus we conclude: if $k$ is a field of characteristic $0$ and $Q$ any
smooth affine quadric over $k$, then $\Sigma^\infty Q$ is invertible in
$\SH(k)$. (More generally, this holds for \emph{any} perfect field after
inverting the characteristic.)

Here is a detailed description of the organisation of this paper. In Section
\ref{sec:abstract-hurewicz} we prove a general result about $t$-categories and
certain functors between them, which we call the \emph{abstract Hurewicz
Theorem}. It will be our replacement for the classical Hurewicz Theorem.

In Section \ref{sec:motivic-homotopy-theory}, we recall the construction of
$\SH(k)$ and $\DM(k)$ with the appropriate $t$-structures. We show how to
combine the abstract Hurewicz Theorem with a fundamental result of Deglise to
deduce our motivic Hurewicz Theorem.

In Section \ref{sec:homotopy-modules} we study the heart of the category $\SH(k)$
in more detail. It is also known as the category of \emph{homotopy modules}.
We combine the careful study of Morel of strictly invariant
sheaves with Levine's work on Voevodsky's slice filtration to show that every
homotopy module $H$ has a canonical filtration $F_\bullet$ built using the
unramified Milnor-Witt K-theory sheaves $\ul K_*^{MW}$. Combined with
Voevodsky's resolution of the Milnor conjectures, we can prove the
conservativity theorem for $k$ of finite 2-étale cohomological dimension and
with the exponential characteristic inverted.

In order to remove the need to invert the exponential characteristic, one uses a
very similar (probably identical) filtration $\tilde{F}_\bullet H$. In this
section we only state its properties; the actual construction is relegated to an
appendix.

The Pic-injectivity theorem is obtained as an easy consequence of the
conservativity result.

The rest of the paper deals with the case of orderable fields. Using methods
very similar to the unorderable case, one shows: if $E$ is connective and
slice-connective and $ME \wequi 0$, then $2$ is invertible on $E$.
This is a crucial insight of Bondarko \cite{bondarko-effectivity}. (This is where we need finite virtual
2-étale cohomological dimension.)

In Section \ref{sec:witt-motives} we tackle the problem of building a
conservative functor for spectra on which $2$ is invertible. We consider a
category $\DM_W(k, \ZZ[1/2])$ with a functor $F: \SH(k) \to \DM_W(k, \ZZ[1/2])$
factoring through $\SH(k)_2^-$.
It has already been
studied in \cite{levine2015witt}. An easy argument using the abstract Hurewicz
theorem shows that if $E \in \SH(k)$ is connective and slice connective,
$ME \wequi 0$ and also $FE \simeq 0$,
then $E \wequi 0$. Consequently from this part on we concentrate on studying the
category $\DM_W(k, \ZZ[1/2])$. Using ideas from \cite{karoubi2015witt} we show that
this category satisfies descent in the ``real étale topology'' in an appropriate
sense. This allows us to assume that $k$ is real closed. Using ideas from
semi-algebraic topology, we prove that the category $\DM_W(k, \ZZ[1/2])$ is actually
independent of the real closed field $k$, namely that $\DM_W(k, \ZZ[1/2])$ is
canonically equivalent to $D(\ZZ[1/2])$. Putting all of
these facts together, we obtain the conservativity theorem for orderable fields.

The paper concludes with two appendices. In Appendix \ref{sec:compact-objects},
we recall some well-known results about compact objects in abelian and
triangulated categories. In Appendix \ref{sec:constructing-F*} we
construct the missing filtration $\tilde{F}_\bullet$ mentioned earlier. (The
results from appendix \ref{sec:compact-objects} are needed to establish that the
filtration is finite.) This
appendix is essentially a long and detailed computation with strictly homotopy invariant
sheaves. As a bonus, we can reprove a version of rigidity for torsion objects in
motivic homotopy theory \cite{rigidity-in-motivic-homotopy-theory}, see
Corollary \ref{corr:rigidity}.

The author wishes to thank Fabien Morel for suggesting this topic of
investigation and providing many helpful insights. He also wishes to thank
Mikhail Bondarko for useful discussions, and in particular for showing to him
Lemma \ref{lemm:bondarko}.

\paragraph{Notations and Conventions}
The signs $\wequi$ and $\iso$ shall denote isomorphisms. Whenever there is a category
$\mathcal{C}$ together with a homotopy category $Ho(\mathcal{C})$, we shall only
apply $\wequi$ to mean an isomorphism in $Ho(\mathcal{C})$ and $\iso$ to mean
an isomorphism in $\mathcal{C}$.

An adjunction between categories $\mathcal{C}, \mathcal{D}$ with left adjoint
$F$ and right adjoint $U$ shall be denoted $F: \mathcal{C} \leftrightarrows
\mathcal{D}: U$.

\section{The Abstract Hurewicz Theorem}
\label{sec:abstract-hurewicz}

In this short section we prove an abstract result about $t$-categories, by which
we mean triangulated categories provided with a $t$-structure
\cite[Section 1.3]{beilinson1982faisceaux}. It is
(we feel) so natural that it has probably occurred elsewhere before. We also
include some well-known results. Since all proofs are short and easy, we include
them for the convenience of the reader.

We first review some notations. The most important thing to point out is that we
use \emph{homological} notation for $t$-categories, whereas most sources, such
as the excellent \cite[Section IV \S 4]{gelfand-methods},
use cohomological notation. The usual reindexing
trick ($E_n \leftrightarrow E^{-n}$) can be used to translate between the two.

In more detail, a $t$-category is a triple $(\mathcal{C}, \mathcal{C}_{\le 0},
\mathcal{C}_{\ge 0})$ consisting of a triangulated category $\mathcal{C}$ and
two full subcategories $\mathcal{C}_{\le 0}, \mathcal{C}_{\ge 0} \subset
\mathcal{C}$ satisfying certain properties. One puts $\mathcal{C}_{\le n} =
\mathcal{C}_{\le 0}[n]$ and $\mathcal{C}_{\ge n} = \mathcal{C}_{\ge 0}[n]$. The
inclusion $\mathcal{C}_{\le n} \hookrightarrow \mathcal{C}$ has a
\emph{left} adjoint denoted $E \mapsto E_{\le n}$. Similarly the inclusion $\mathcal{C}_{\ge
n} \hookrightarrow \mathcal{C}$ has a \emph{right} adjoint denoted $E
\mapsto E_{\ge n}$. Both are called \emph{truncation}.

The full subcategory $\mathcal{C}^\heart = \mathcal{C}_{\ge 0} \cap
\mathcal{C}_{\le 0}$ turns out to be abelian. It is called the \emph{heart} of
the $t$-category $\mathcal{C}$. The \emph{homotopy objects} are
$\pi_0^\mathcal{C}(E) = (E_{\le 0})_{\ge 0} \wequi (E_{\ge 0})_{\le 0} \in
\mathcal{C}^\heart$ and $\pi_i^\mathcal{C}(E) = \pi_0^\mathcal{C}(E[-i])$.

One has many more properties. For example $\mathcal{C}_{\ge n} \supset
\mathcal{C}_{\ge n+1}$, $\mathcal{C}_{\le n} \subset \mathcal{C}_{\le n+1}$,
there are functorial distinguished triangles $E_{\ge n} \to E \to E_{\le n-1}$,
the functor $\pi_*^\mathcal{C}$ is homological (turns distinguished triangles
into long exact sequences), etc.

Recall that a triangulated functor $F: \mathcal{C} \to \mathcal{D}$ between $t$-categories
is called \emph{right-t-exact} (respectively \emph{left-t-exact}) if
$F(\mathcal{C}_{\ge 0}) \subset \mathcal{D}_{\ge 0}$ (respectively if
$F(\mathcal{C}_{\le 0}) \subset \mathcal{D}_{\le 0}$). It is called
\emph{t-exact} if it is both left and right $t$-exact.

If $F: \mathcal{C} \to \mathcal{D}$ is any triangulated functor between
$t$-categories, then it induces a functor $F^\heart: \mathcal{C}^\heart \to
\mathcal{D}^\heart, E \mapsto \pi_0^\mathcal{D} FE$.

For computations, we often write $[A, B]$ instead of $\Hom_\mathcal{C}(A, B)$.
We also note that the axioms for $t$-categories are self-dual. Passing from
$\mathcal{C}$ to $\mathcal{C}^{op}$ (and $\mathcal{D}$ to $\mathcal{D}^{op}$)
turns left-$t$-exact functors into right-$t$-exact functors, etc.

The notions of left and right $t$-exactness are partly justified by the
following.

\begin{prop} \label{prop:adjoints-heart}
Let $M: \mathcal{C} \rightleftarrows \mathcal{D}: U$ be adjoint functors between
$t$-categories, with $M$ right-$t$-exact and $U$ left-$t$-exact. Then
\[ M^\heart: \mathcal{C}^\heart \rightleftarrows \mathcal{D}^\heart: U \]
is also an adjoint pair. In particular $M^\heart$ is right exact and $U^\heart$
is left exact.
\end{prop}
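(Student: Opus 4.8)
The plan is to check directly that for objects $A \in \mathcal{C}^\heart$ and $B \in \mathcal{D}^\heart$ one has a natural isomorphism $[MA, B]_{\mathcal{D}^\heart} \cong [A, UB]_{\mathcal{C}^\heart}$, where on the heart categories $\Hom$ is just the ambient $\Hom$ restricted. Note first that $M^\heart A = \pi_0^{\mathcal{D}}(MA)$, but since $M$ is right-$t$-exact and $A \in \mathcal{C}_{\ge 0}$, we have $MA \in \mathcal{D}_{\ge 0}$, so $\pi_0^{\mathcal{D}}(MA) = (MA)_{\le 0}$; similarly $U^{\heart} B = \pi_0^{\mathcal{C}}(UB) = (UB)_{\ge 0}$ since $U$ is left-$t$-exact and $B \in \mathcal{D}_{\le 0}$.

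Next I would compute, for $A, B$ in the hearts:
\[
[M^\heart A, B]_{\mathcal{D}} = [(MA)_{\le 0}, B]_{\mathcal{D}} \cong [MA, B]_{\mathcal{D}},
\]
where the last isomorphism is the adjunction between the truncation functor $(-)_{\le 0}$ and the inclusion $\mathcal{D}_{\le 0} \hookrightarrow \mathcal{D}$, using $B \in \mathcal{D}_{\le 0}$. Then apply the $(M,U)$-adjunction to get $[MA, B]_{\mathcal{D}} \cong [A, UB]_{\mathcal{C}}$. Finally, since $A \in \mathcal{C}_{\ge 0}$, the adjunction between the inclusion $\mathcal{C}_{\ge 0} \hookrightarrow \mathcal{C}$ and its right adjoint $(-)_{\ge 0}$ gives $[A, UB]_{\mathcal{C}} \cong [A, (UB)_{\ge 0}]_{\mathcal{C}} = [A, U^\heart B]_{\mathcal{C}}$. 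Chaining these isomorphisms yields the desired adjunction, and naturality in $A$ and $B$ is inherited from the naturality of each step (truncation units/counits and the original adjunction unit/counit are all natural transformations). The last sentence of the proposition is then immediate: a left adjoint between abelian categories is right exact and a right adjoint is left exact.

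I do not expect any serious obstacle here; the only point requiring a little care is making sure the truncation adjunctions are being applied in the correct variance — the left adjoint $(-)_{\le n}$ of the inclusion $\mathcal{C}_{\le n}\hookrightarrow\mathcal{C}$ means $[E_{\le 0}, B] \cong [E, B]$ for $B \in \mathcal{C}_{\le 0}$, while the right adjoint $(-)_{\ge n}$ of $\mathcal{C}_{\ge n}\hookrightarrow\mathcal{C}$ means $[A, F_{\ge 0}] \cong [A, F]$ for $A \in \mathcal{C}_{\ge 0}$ — and these match up exactly with the fact that $M$ lands in $\mathcal{D}_{\ge 0}$ (so we truncate it from above, i.e.\ apply $(-)_{\le 0}$, to reach the heart) and $U$ lands in $\mathcal{C}_{\le 0}$ (so we truncate it from below). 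One should also remark that $M^\heart$ and $U^\heart$ as defined in the preamble ($E \mapsto \pi_0 F E$) agree with the one-sided truncations used above, which is exactly the computation of the first paragraph. Everything else is formal manipulation of adjunctions.
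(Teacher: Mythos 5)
Your argument is correct and is essentially identical to the paper's proof: the same chain of isomorphisms $[M^\heart A,B]=[(MA)_{\le 0},B]=[MA,B]=[A,UB]=[A,(UB)_{\ge 0}]=[A,U^\heart B]$, justified by right-$t$-exactness of $M$, left-$t$-exactness of $U$, the truncation adjunctions, and the $(M,U)$-adjunction. The concluding remark that a left (resp.\ right) adjoint between abelian categories is right (resp.\ left) exact matches the paper as well.
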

\begin{proof}
Let $A \in \mathcal{C}^\heart, B \in \mathcal{D}^\heart$. We compute
\begin{align*}
[M^\heart A, B] &\stackrel{(1)}{=} [\pi_0^\mathcal{D} MA, B]
    \stackrel{(2)}{=} [(MA)_{\le 0}, B] \stackrel{(3)}{=} [MA, B] \\
  &\stackrel{(4)}{=} [A, UB]
   \stackrel{(3')}{=} [A, (UB)_{\ge 0}]
   \stackrel{(2')}{=} [A, \pi_0^\mathcal{C} UB] \stackrel{(1')}{=} [A, U^\heart B].
\end{align*}
Here (1) is by definition, (2) is because $A \in \mathcal{C}_{\ge 0}$ and so $MA
\in \mathcal{D}_{\ge 0}$ by right-$t$-exactness of $M$, and (3) because $B \in
\mathcal{D}_{\le 0}$. Equality (4) is by adjunction, and finally (3'), (2'),
(1') reverse (3), (2), (1) with $M \leftrightarrow U$, left $\leftrightarrow$
right, etc.
\end{proof}

The following lemma is a bit technical but naturally isolates a crucial step in
the proof of the main theorem of this section.

\begin{lemm} \label{lemm:technical-t-stuff}
Let $M: \mathcal{C} \rightleftarrows \mathcal{D}: U$ be adjoint functors between
$t$-categories. If $M$ is right-$t$-exact (respectively $U$ left-$t$-exact) then
there is a natural isomorphism $(UE)_{\ge n} \wequi (UE_{\ge n})_{\ge n}$
(respectively $(ME)_{\le n} \wequi (M E_{\le n})_{\le n}$).
\end{lemm}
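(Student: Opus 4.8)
The plan is to prove the two statements by duality, so it suffices to treat, say, the case where $M$ is right-$t$-exact and establish $(ME)_{\le n} \approx (ME_{\le n})_{\le n}$; the other assertion then follows by passing to opposite categories, under which $M$ becomes a right adjoint which is left-$t$-exact, and the roles of $\le n$ and $\ge n$ are swapped. By shifting we may also assume $n = 0$. So the goal is: for $M$ right-$t$-exact, $(ME)_{\le 0} \approx (ME_{\le 0})_{\le 0}$ naturally in $E$.

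First I would observe that the natural transformation in question comes from the truncation map $E \to E_{\le 0}$: applying $M$ and then $(-)_{\le 0}$ gives a natural map $(ME)_{\le 0} \to (ME_{\le 0})_{\le 0}$, and the claim is that this is an isomorphism. The cleanest way to see this is to check that $(ME_{\le 0})_{\le 0}$, together with this map, satisfies the universal property of $(ME)_{\le 0}$ — i.e. that it is the reflection of $ME$ into $\mathcal{D}_{\le 0}$. So let $D \in \mathcal{D}_{\le 0}$ be arbitrary. I would compute $[ME_{\le 0}, D]$ in two ways. On one hand, since $D \in \mathcal{D}_{\le 0}$, the truncation adjunction gives $[(ME_{\le 0})_{\le 0}, D] = [ME_{\le 0}, D]$. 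On the other hand, by the $(M, U)$-adjunction $[ME_{\le 0}, D] = [E_{\le 0}, UD]$. Now here is the key point: because $M$ is right-$t$-exact, its right adjoint $U$ is left-$t$-exact (this is part of the content of Proposition \ref{prop:adjoints-heart}, or rather its proof, and in any case is the standard adjoint characterization of $t$-exactness); hence $UD \in \mathcal{D}_{\le 0}$ — wait, $UD \in \mathcal{C}_{\le 0}$. Therefore the truncation adjunction on the $\mathcal{C}$ side gives $[E_{\le 0}, UD] = [E, UD]$, and undoing the $(M,U)$-adjunction this equals $[ME, D]$. Chaining these, $[(ME_{\le 0})_{\le 0}, D] \cong [ME, D]$ naturally in $D \in \mathcal{D}_{\le 0}$, and one checks the identification is induced by the map $(ME)_{\le 0} \to (ME_{\le 0})_{\le 0}$ above; by Yoneda this map is an isomorphism.

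For the parenthetical case ($U$ left-$t$-exact, conclusion $(UE)_{\ge n} \approx (UE_{\ge n})_{\ge n}$), I would run the dual argument: reduce to $n=0$ by shifting, note the natural map $(UE_{\ge 0})_{\ge 0} \to (UE)_{\ge 0}$ induced by $E_{\ge 0} \to E$, and test against arbitrary $C \in \mathcal{C}_{\ge 0}$ using $[C, (UE_{\ge 0})_{\ge 0}] = [C, UE_{\ge 0}] = [MC, E_{\ge 0}]$; since $U$ is left-$t$-exact its left adjoint $M$ is right-$t$-exact, so $MC \in \mathcal{D}_{\ge 0}$, whence $[MC, E_{\ge 0}] = [MC, E] = [C, UE] = [C, (UE)_{\ge 0}]$. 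Alternatively, just invoke the opposite-category reduction explicitly to avoid rewriting the argument.

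I do not expect a genuine obstacle here; the one point that needs a little care is the passage ``$M$ right-$t$-exact $\Rightarrow$ $U$ left-$t$-exact'', which should be stated or used as a known fact about adjoint functors between $t$-categories (it follows from the computation in the proof of Proposition \ref{prop:adjoints-heart}, applied with $B$ replaced by a general object, or directly: for $D \in \mathcal{D}_{\le 0}$ and $C \in \mathcal{C}_{\ge 1}$ one has $[C, UD] = [MC, D] = 0$ since $MC \in \mathcal{D}_{\ge 1}$, so $UD \in \mathcal{C}_{\le 0}$). The mild bookkeeping obstacle is making sure the abstract isomorphism of Hom-functors is realized by the explicit truncation-induced map, so that naturality in $E$ is automatic; this is routine diagram-chasing with the unit/counit of the two adjunctions.
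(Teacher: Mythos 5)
Your argument is correct and is essentially the paper's proof: reduce to one statement by duality, then for $D \in \mathcal{D}_{\le n}$ chain $[(ME_{\le n})_{\le n}, D] = [ME_{\le n}, D] = [E_{\le n}, UD] = [E, UD] = [ME, D] = [(ME)_{\le n}, D]$ using $UD \in \mathcal{C}_{\le n}$, and conclude by Yoneda (naturality in $E$ being routine, as you note). The one bookkeeping discrepancy is that the lemma pairs ``$M$ right-$t$-exact'' with the $(UE)_{\ge n}$ conclusion and ``$U$ left-$t$-exact'' with the $(ME)_{\le n}$ conclusion, the opposite of the pairing you set up; since each of your two computations in fact uses exactly the exactness hypothesis stated in the lemma for that conclusion, you can simply drop the detour through ``$M$ right-$t$-exact $\Rightarrow$ $U$ left-$t$-exact'' (which is correct as you argue it, but is recorded separately in the paper as Lemma \ref{lemm:t-exactness-adjoints} and is there deduced from the present lemma).
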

\begin{proof}
By duality, we need only prove one of the statements. Suppose that $U$ is
left-$t$-exact. We compute for $T \in \mathcal{D}_{\le n}$
\begin{align*}
[(ME)_{\le n}, T] &\stackrel{(1)}{=} [ME, T] \stackrel{(2)}{=} [E, UT] \\
   &\stackrel{(3)}{=} [E_{\le n}, UT] \stackrel{(2)}{=} [M E_{\le n}, T]
    \stackrel{(1)}{=} [(M E_{\le n})_{\le n}, T].
\end{align*}
Since all equalities are natural, the result follows from the Yoneda lemma. Here
(1) is because $T \in \mathcal{D}_{\le n}$, (2) is by adjunction, and (3) is because
$U$ is left-$t$-exact so $UT \in \mathcal{C}_{\le n}$.
\end{proof}

With this preparation, we can formulate our theorem.

\begin{thm}\label{thm:abstract-hurewicz}
Let $M: \mathcal{C} \rightleftarrows \mathcal{D}: U$ be adjoint functors between
$t$-categories, such that $M$ is right-$t$-exact and $U$ is left-$t$-exact. Then
for $E \in \mathcal{C}_{\ge 0}$ there is a natural isomorphism
\[ \pi_0^\mathcal{D} ME \wequi M^\heart \pi_0^\mathcal{C} E. \]
\end{thm}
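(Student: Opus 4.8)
The plan is to compute $\pi_0^{\mathcal D} ME$ directly via the Yoneda lemma, testing against an arbitrary object $B \in \mathcal{D}^\heart$. The starting point is the standard identity $\pi_0^{\mathcal D}(F) = ((F)_{\ge 0})_{\le 0}$ for any $F \in \mathcal{D}$; since $E \in \mathcal{C}_{\ge 0}$ and $M$ is right-$t$-exact, $ME \in \mathcal{D}_{\ge 0}$, so already $(ME)_{\ge 0} = ME$ and hence $\pi_0^{\mathcal D} ME = (ME)_{\le 0}$. Thus for $B \in \mathcal{D}^\heart$ we have $[\pi_0^{\mathcal D} ME, B] = [(ME)_{\le 0}, B] = [ME, B]$, using $B \in \mathcal{D}_{\le 0}$.

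Next I would push the $\pi_0^{\mathcal C}$ onto $E$ on the other side. Here is where Lemma \ref{lemm:technical-t-stuff} does the work: applied with $n = 0$ and using that $M$ is right-$t$-exact, it gives $(ME)_{\le 0} \approx (M E_{\le 0})_{\le 0}$, so $[ME,B] = [ME_{\le 0}, B]$. Then, since $E \in \mathcal{C}_{\ge 0}$, we have $E_{\le 0} = (E_{\ge 0})_{\le 0} = \pi_0^{\mathcal C} E$, so $[ME_{\le 0}, B] = [M(\pi_0^{\mathcal C} E), B]$. Finally, $\pi_0^{\mathcal C} E \in \mathcal{C}^\heart$, so by the computation in Proposition \ref{prop:adjoints-heart} (or just its first three displayed equalities) we get $[M(\pi_0^{\mathcal C} E), B] = [\pi_0^{\mathcal D} M(\pi_0^{\mathcal C} E), B] = [M^\heart \pi_0^{\mathcal C} E, B]$. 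Chaining these, $[\pi_0^{\mathcal D} ME, B] \cong [M^\heart \pi_0^{\mathcal C} E, B]$ naturally in $B \in \mathcal{D}^\heart$, and the Yoneda lemma in the abelian category $\mathcal{D}^\heart$ yields the desired natural isomorphism $\pi_0^{\mathcal D} ME \approx M^\heart \pi_0^{\mathcal C} E$.

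I do not expect a serious obstacle; the proof is essentially a bookkeeping exercise combining the two preparatory results. The one point requiring a little care is \emph{naturality}: each individual equality must be natural in $E$ (as well as in $B$) so that the Yoneda argument produces a natural transformation of functors $\mathcal{C}_{\ge 0} \to \mathcal{D}^\heart$, not merely an isomorphism for each fixed $E$. This is automatic since truncations, adjunction units/counits, and $\pi_0$ are all functorial, but it is worth a remark. A secondary subtlety is making sure the identification $E_{\le 0} \simeq \pi_0^{\mathcal C} E$ for $E \in \mathcal{C}_{\ge 0}$ is invoked correctly — this is just the definition $\pi_0^{\mathcal C}(E) = (E_{\ge 0})_{\le 0}$ together with $E_{\ge 0} = E$. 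Alternatively, one could phrase the whole argument on the level of representable functors from the outset, but the displayed chain of equalities is the cleanest presentation.
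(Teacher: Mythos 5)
Your proposal is correct and is essentially the paper's argument: the paper writes the same chain $M^\heart\pi_0^{\mathcal C}E = \pi_0^{\mathcal D}(ME_{\le 0}) = (ME_{\le 0})_{\le 0} = (ME)_{\le 0} = \pi_0^{\mathcal D}ME$ directly at the level of objects, while you wrap it in $[\,\cdot\,,B]$ for $B\in\mathcal{D}^\heart$ and finish with Yoneda, which is a purely cosmetic difference. One small citation slip: the isomorphism $(ME)_{\le 0}\approx(ME_{\le 0})_{\le 0}$ is the half of Lemma \ref{lemm:technical-t-stuff} predicated on left-$t$-exactness of $U$ (as the paper cites it), not on right-$t$-exactness of $M$; since both hypotheses are assumed (and each implies the other by Lemma \ref{lemm:t-exactness-adjoints}), this does not affect the validity of your argument.
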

\begin{proof}
We have
\[ M^\heart \pi_0^\mathcal{C} E = \pi_0^\mathcal{D} M \pi_0^\mathcal{C} E
  \stackrel{(1)}{=} \pi_0^\mathcal{D} M E_{\le 0}
  \stackrel{(2)}{=} (M E_{\le 0})_{\le 0} \stackrel{(3)}{=} (ME)_{\le 0}
  \stackrel{(2)}{=} \pi_0^\mathcal{D} ME. \]
This is the desired result.
Here (1) is because $E \in \mathcal{C}_{\ge 0}$, (2) is because $M$ is
right-$t$-exact, and (3) is because of
Lemma \ref{lemm:technical-t-stuff} applied to left-$t$-exactness of $U.$
\end{proof}

This has the following useful corollary.

\begin{corr} \label{corr:hurewicz-conservativity}
In the above situation, if $M^\heart$ (or $U^\heart$) is an equivalence of
categories, and the $t$-structure on $\mathcal{C}$ is non-degenerate,
then the functor $M$ is conservative for connective objects.
\end{corr}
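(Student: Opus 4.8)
The plan is to deduce conservativity from the abstract Hurewicz Theorem by a standard induction up the Postnikov tower, using non-degeneracy to conclude. First I would observe that the hypothesis reduces to a single clean statement: since $M^\heart$ is an equivalence (the case where $U^\heart$ is an equivalence being dual, or following by adjunction formalities), Theorem \ref{thm:abstract-hurewicz} gives, for every $E \in \mathcal{C}_{\ge 0}$, a natural isomorphism $\pi_0^\mathcal{D} ME \approx M^\heart \pi_0^\mathcal{C} E$, and the right-hand side vanishes if and only if $\pi_0^\mathcal{C} E$ does. So if $ME \simeq 0$ then $\pi_0^\mathcal{C} E = 0$.

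Next I would bootstrap this to all the higher homotopy objects. Suppose $E \in \mathcal{C}_{\ge 0}$ and $ME \simeq 0$. Applying the above to $E$ itself gives $\pi_0^\mathcal{C} E = 0$, hence $E \in \mathcal{C}_{\ge 1}$. Now $E[-1] \in \mathcal{C}_{\ge 0}$ and $M(E[-1]) = (ME)[-1] \simeq 0$, so the same argument gives $\pi_1^\mathcal{C} E = \pi_0^\mathcal{C}(E[-1]) = 0$, hence $E \in \mathcal{C}_{\ge 2}$. Iterating, $\pi_i^\mathcal{C} E = 0$ for all $i \ge 0$; combined with $E \in \mathcal{C}_{\ge 0}$ (so $\pi_i^\mathcal{C} E = 0$ for $i < 0$ as well), we get $\pi_i^\mathcal{C} E = 0$ for all $i \in \ZZ$. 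Non-degeneracy of the $t$-structure on $\mathcal{C}$ — which says precisely that an object all of whose homotopy objects vanish is itself zero — now forces $E \simeq 0$. Since $F$ (read $M$) sends a connective object with zero image to zero, it is conservative on connective objects in the relevant sense: a map $f\colon E \to E'$ of connective objects with $Mf$ an isomorphism has cofiber $C$ with $MC \simeq 0$; but $C$ need only be assumed connective if we already know it (e.g. it is $(-1)$-connective automatically and we apply the argument to a shift, or one restricts to the statement that $ME \simeq 0 \Rightarrow E \simeq 0$, which is how the corollary is phrased).

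A small technical point I would address carefully: to run the induction I repeatedly use that $M$ is right-$t$-exact so that $ME[-n] = M(E[-n])$ stays in $\mathcal{D}_{\ge 0}$ when $E[-n] \in \mathcal{C}_{\ge 0}$, and that Theorem \ref{thm:abstract-hurewicz} applies to each shift $E[-n]$; both are immediate. The only genuinely load-bearing hypothesis beyond the abstract Hurewicz theorem is non-degeneracy, and it is used exactly once, at the end, to pass from ``all homotopy objects vanish'' to ``the object vanishes''. I do not anticipate a real obstacle here; the argument is the familiar Whitehead-type induction, and the abstract Hurewicz theorem has already done the conceptual work of identifying $\pi_0^\mathcal{D} M$ with $M^\heart \pi_0^\mathcal{C}$. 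If anything, the subtlety is purely expository — being precise about what ``conservative for connective objects'' means (conservativity on the full subcategory $\mathcal{C}_{\ge 0}$, or the statement $ME \simeq 0 \Rightarrow E \simeq 0$ for connective $E$) — and I would simply state it as the latter, which is what the proof delivers.
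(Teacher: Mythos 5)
Your argument is correct and is essentially the paper's own proof: apply the abstract Hurewicz Theorem together with the equivalence of hearts to kill $\pi_0^{\mathcal{C}}$, shift, iterate to show all homotopy objects vanish, and invoke non-degeneracy to conclude $E \simeq 0$. Your closing remark on the intended reading of ``conservative for connective objects'' (the vanishing statement) matches how the corollary is used later in the paper.
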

\begin{proof}
If $X \in \mathcal{C}_{\ge n}$ and $FX = 0$ then $F^\heart(\pi_n^\mathcal{C} X)
= \pi_n^\mathcal{D}FX = 0$, so $X \in \mathcal{C}_{\ge n+1}$. Iterating this
argument we find that $\pi_i^\mathcal{C}X = 0$ for all $i$ and so $X = 0$ by
non-degeneracy of the $t$-structure.
\end{proof}

For convenience, we also include the following well-known observation.

\begin{lemm} \label{lemm:t-exactness-adjoints}
Let $M: \mathcal{C} \rightleftarrows \mathcal{D}: U$ be adjoint functors between
$t$-categories. Then $M$ is right-$t$-exact if and only if $U$ is
left-$t$-exact.
\end{lemm}
So in applying the theorem, only one of the two exactness properties has to be
checked.
\begin{proof}
This is immediate by adjunction: $M$ is right-$t$-exact if and only if
$M(\mathcal{C}_{\ge 0}) \subset \mathcal{D}_{\ge 0}$, which happens
if and only if $[ME, F] = 0$ for all $E \in \mathcal{C}_{\ge 0}, F \in \mathcal{D}_{< 0}$.
By adjunction
$[ME, F] = [E, UF]$, and this vanishes if and only if $U(\mathcal{D}_{< 0}) \subset
\mathcal{C}_{< 0}$, i.e. $F$ is left-$t$-exact.
\end{proof}

To illustrate these rather abstract results, we show how to recover the
classical Hurewicz theorem for spectra. For this, we consider understood the
functor $C_*: \SH \to D(Ab)$ and the respective $t$-structures. We write $\pi_i
= \pi_i^\SH, H_i = \pi_i^{D(Ab)}$. Since $C_*$ commutes with arbitrary sums
(wedges), it has a right adjoint $U$, by Neeman's version of Brown
representability. Since $C_* S = \ZZ[0]$ is projective, an easy computation
shows that $\pi_i UC = H_i C$ for any $C \in D(Ab)$. Thus $U$ is $t$-exact (it is
in fact the Eilenberg-MacLane spectrum functor). From Lemma
\ref{lemm:t-exactness-adjoints} and Theorem \ref{thm:abstract-hurewicz} we
conclude now that for $E \in \SH_{\ge 0}$ we have $\pi_0 E = H_0 E$ (recall that
$C_*^\heart$ is an isomorphism).

\paragraph{Constructing $t$-structures.}
In Section \ref{sec:witt-motives} we will have the need of transferring
$t$-structures to localisations. The following Lemma applies in many situations
where the homotopy objects are computed as homotopy sheaves, as will always be
the case for us.

\begin{lemm} \label{lemm:transfer-$t$-structure}
Let $\mathcal{C}$ be a compactly generated triangulated $t$-category with coproducts,
$G \subset \mathcal{C}$ a set of compact generators.

Let $i: U \subset \mathcal{C}$ be a colocalising subcategory with left adjoint $L: \mathcal{C} \to U$.

Assume the following:
\begin{enumerate}[(a)]
\item $G \subset \mathcal{C}_{\ge 0}$,
\item if $E \to F$ is a morphism in $\mathcal{C}$ such that for all $X \in G$
  the induced map $[X, E] \to [X, F]$ is surjective, then $\pi_0^\mathcal{C}(E) \to
  \pi_0^\mathcal{C}(F)$ is also surjective,
\item the $t$-structure is non-degenerate,
\item homotopy objects commute with directed homotopy colimits,
\item $L$ preserves compact objects,
\item $iLG \subset \mathcal{C}_{\ge 0}$.
\end{enumerate}

Then $U$ has a unique $t$-structure such that $i: U \to \mathcal{C}$ is $t$-exact.
\end{lemm}
\begin{proof}
Uniqueness is clear. We need to show existence.

I claim that for $E \in U$ there exists a map $E \to E'$ with $E' \in U$,
$\pi_0^\mathcal{C}(E') = 0$ and $\pi_i^\mathcal{C}(E) = \pi_i^\mathcal{C}(E')$
for $i < 0$. Indeed let $A = \bigoplus_{X \in G} \bigoplus_{[X, E]} X$. Then
there is a canonical map $A \to E = iE$ and the cofibre $LA \to E \to E'$ has
the required property. Indeed $\pi_i^\mathcal{C}(E') = \pi_i^\mathcal{C}(E)$ for
$i < 0$ because $\pi_i^\mathcal{C}(A) = 0$ by assumptions (a), (d). Also for $X \in G$,
given $\alpha: X \to iE$ we get a factorisation $X \to iLX \to iE$. Thus $[X,
iLA] \to [X, iE]$ is surjective and by assumption (b), $\pi_0^\mathcal{C}(E') = 0$.

Repeating the argument, given $E \in U$ we get a diagram $E = E_0 \to E_1 \to E_2 \to
\dots$ with $E_i \in U$, and
where $E_i \to E_{i+1}$ has the property that $\pi_k^\mathcal{C}(E_i) =
\pi_k^\mathcal{C}(E_{i+1})$ for $k < i$ and $\pi_i^\mathcal{C}(E_{i+1}) = 0$.
Let $E_\infty = \hocolim_i E_i$. Since $L$ preserves compact objects by (e), $i$
preserves coproducts and so commutes with hocolim, whence $E_\infty$ makes
unambiguous sense (i.e. we may compute the homotopy colimit in $U$ or
$\mathcal{C}$, with the same result).

But note that $\pi_i^\mathcal{C}(E) =
\pi_i^\mathcal{C}(E_k)$ for all $i < 0$ and all $k \ge 0$, whereas
$\pi_i^\mathcal{C}(E_k) = 0$ for $0 \le i < k$.
It follows from our assumption (d) that $\pi_i^\mathcal{C}(E_\infty) = 0$ for all $i \ge 0$, and
thus by non-degeneracy of the $t$-structure (c) we have $E_\infty \in \mathcal{C}_{<
0}$. The map $E \to E_\infty$ induces an isomorphism on $\pi_i^\mathcal{C}$ for
all $i < 0$ and hence $E_\infty \wequi E_{< 0}$, again by (c).

Consequently we have shown that for $E \in U$ we also have $E_{< 0} \in U$. It
follows that $E_{\ge  0} \in U$ (since $U$ is triangulated) and one easily
verifies that $U \cap
\mathcal{C}_{\ge 0}, U \cap \mathcal{C}_{< 0}$ defines a $t$-structure on $U$.
The functor $i: U \to \mathcal{C}$ is $t$-exact by design.
\end{proof}

\section{The Case of Motivic Homotopy Theory:
  The Motivic Hurewicz Theorem}
\label{sec:motivic-homotopy-theory}

In this section we shall work with a fixed perfect ground field $k$.

We now show how to apply the results of the previous section to motivic homotopy
theory. This mainly consists in recalling definitions and providing references.

First we need to recall the construction of $\SH(k)$ and $\DM(k)$. We follow \cite[Section
2]{RondigsModules}. Let $Sm(k)$ be the category of smooth schemes over the
perfect field $k$ and $Cor(k)$ the category whose objects are the smooth schemes
and whose morphisms are the finite correspondences. We write $Shv(k)$
(respectively $Shv^{tr}(k)$) for the categories of Nisnevich sheaves. Write $R:
Sm(k) \to Shv(k)$ and $R_{tr}: Cor(k) \to Shv^{tr}(k)$ for the functors sending an
object to the sheaf it represents.

There is a natural functor $M: Sm(k) \to Cor(k)$ with $M(X) = X$ and $M(f) =
\Gamma_f$, the graph of $f$. This induces a functor $U: Shv^{tr}(k) \to Shv(k)$
via $(UF)(X) = F(MX)$. There is a left adjoint $M: Shv(k) \to Shv^{tr}(k)$ to
$U$. It is the unique colimit-preserving functor such that $M(RX) = R_{tr}(X)$.
Write $Shv_*(k)$ for the category of pointed sheaves. Then there is $R_+: Sm(k)
\to Shv_*(k)$ obtained by adding a disjoint base point. The objects in
$U(Shv^{tr}(k))$ are canonically pointed (by zero) and one obtains a new
adjunction $\tilde{M}: Shv_*(k) \rightleftarrows Shv^{tr}(k): U$

We can pass to simplicial objects and extend $M$ and $U$ levelwise to obtain an
adjunction $\tilde{M}: \Delta^{op} Shv_*(k) \rightleftarrows \Delta^{op} Shv^{tr}(k) : U$.
We denote by $Spt(k)$ the category of $S^{2,1} := S^1 \wedge \Gm$-spectra in
$\Delta^{op} Shv_*(k)$ and by $Spt^{tr}(k)$ the category of $M(S^{2,1})$-spectra
in $\Delta^{op} Shv^{tr}(k)$. The adjunction still extends, so we obtain the
following commutative diagram.

\begin{figure*}[h]
\centering
\begin{tikzcd}
  Sm(k) \arrow{d}{R_+} \arrow{r}{M} & Cor(k) \arrow{d}{R_{tr}} \\
  Shv_*(k) \arrow[bend left=10]{r}{\tilde{M}} \arrow{d}
            & \arrow[bend left=10]{l}{U} Shv^{tr}(k) \arrow{d}  \\
  \Delta^{op} Shv_*(k) \arrow[bend left=10]{r}{\tilde{M}} \arrow{d}{\Sigma^\infty}
           & \arrow[bend left=10]{l}{U} \Delta^{op} Shv^{tr}(k) \arrow{d}{\Sigma^\infty} \\
  Spt(k) \arrow[bend left=10]{r}{M} & \arrow[bend left=10]{l}{U} Spt^{tr}(k)
\end{tikzcd}
\end{figure*}

(See also \cite[Diagram (4.1)]{hoyois-algebraic-cobordism}.)
One may put the projective local model structures on the four categories in the
lower square and then the adjunctions become Quillen adjunctions, so pass
through localisation. Contracting the affine line yields the $\Aone$-local model
structures. The homotopy category of $Spt(k)$ (in this model structure) is
denoted $\SH(k)$ and is called the motivic stable homotopy category. Similarly the
homotopy category of $Spt^{tr}(k)$ is denoted $\DM(k)$. It is essentially a
bigger version of the category constructed by Voevodsky, as explained in
\cite[Section 2]{RondigsModules}. We have thus the following commutative
diagram.

\begin{figure*}[h]
\centering
\begin{tikzcd}
  Sm(k) \arrow{d}{\Sigma^\infty(\bullet_+)} \arrow{r}{M} & Cor(k) \arrow{d} \\
  \SH(k) \arrow[bend left=10]{r}{M} & \arrow[bend left=10]{l}{U} \DM(k)
\end{tikzcd}
\end{figure*}

We recall that $M$ (in all its incarnations) is a symmetric monoidal functor.

Next we need to define $t$-structures on $\DM(k)$ and $\SH(k)$. For $E \in
\SH(k)$ (respectively $E \in \DM(k)$) let $\ul \pi_i(E)_j$ (respectively $\ul
h_i(E)_j$) be the sheaf on $Sm(k)$ (respectively on $Cor(k)$) associated with the
presheaf $V \mapsto [\Sigma^\infty (V_+) \wedge S^i, \Gm^{\wedge j} \wedge E]$.
Put
\begin{align*}
  \SH(k)_{\ge 0} &= \{E \in \SH(k): \ul \pi_i(E)_j = 0 \text{ for $i < 0$ and $j \in \ZZ$}\} \\
  \SH(k)_{\le 0} &= \{E \in \SH(k): \ul \pi_i(E)_j = 0 \text{ for $i > 0$ and $j \in \ZZ$}\},
\end{align*}
and similarly for $\DM(k)$. By \cite[Section 5.2]{morel-trieste} this defines a
$t$-structure on $\SH(k)$ called the \emph{homotopy $t$-structure}. It is also
true that $\DM(k)_{\le 0}, \DM(k)_{\ge 0}$ define a $t$-structure. This can be
seen by repeating the arguments of
\cite[Section 2.1]{hoyois-algebraic-cobordism} for $\DM(k)$, noting that the connectivity
theorem for $\DM(k)$ follows from Voevodsky's cancellation theorem.

With all this setup out of the way, we can prove the result of this section.

\begin{lemm} The functor $U: \DM(k) \to \SH(k)$ is $t$-exact.
In fact for $E \in \DM(k)$ we have $\ul \pi_i(UE)_j = U(\ul h_i(E)_j)$ and $U:
Shv^{tr}(k) \to Shv(k)$ detects zero objects.
\end{lemm}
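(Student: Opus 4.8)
The plan is to prove the sheaf-level identity $\ul\pi_i(UE)_j = U(\ul h_i(E)_j)$ first; granting it, both the $t$-exactness of $U$ and the fact that $U$ on sheaves detects zero objects drop out with essentially no further work. (By Lemma~\ref{lemm:t-exactness-adjoints} one would in principle only need to verify one half of $t$-exactness, but the identity supplies both halves at once, so that shortcut is not needed.)

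I would start with the trivial observation on sheaves. The functor $U \colon Shv^{tr}(k) \to Shv(k)$ is given by $(UF)(X) = F(MX)$, and since $M \colon Sm(k) \to Cor(k)$ is the identity on objects we have $(UF)(X) = F(X)$ literally. Thus $U$ only forgets the transfer structure, changing no sections; in particular $UF = 0$ if and only if $F = 0$. This already settles the last assertion of the lemma.

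For the identity, fix $E \in \DM(k)$ and integers $i,j$. By definition $\ul\pi_i(UE)_j$ is the Nisnevich sheaf on $Sm(k)$ associated to $V \mapsto [\Sigma^\infty(V_+)\wedge S^i, \Gm^j \wedge UE]_{\SH(k)}$. Using that $\Gm$ is $\otimes$-invertible in $\SH(k)$, the $(M\dashv U)$-adjunction, the symmetric monoidality of $M$, and the invertibility of $M(\Gm)$ in $\DM(k)$ (which is $\Gm$-stable by construction, being $M(S^{2,1})$-spectra), I would rewrite this group as the chain
\[ [\Sigma^\infty(V_+)\wedge S^i \wedge \Gm^{\wedge(-j)}, UE]_{\SH(k)} = [M(\Sigma^\infty(V_+))\wedge S^i \wedge M(\Gm)^{\wedge(-j)}, E]_{\DM(k)} = [M(V)\wedge S^i, \Gm^j \wedge E]_{\DM(k)}. \]
The right-hand presheaf on $Cor(k)$ is precisely the one whose Nisnevich sheafification with transfers defines $\ul h_i(E)_j$, and the chain exhibits the presheaf computing $\ul\pi_i(UE)_j$ as its restriction along $M$, i.e. its image under the forgetful functor to presheaves on $Sm(k)$. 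So the identity $\ul\pi_i(UE)_j = U(\ul h_i(E)_j)$ reduces to: the Nisnevich sheafification on $Sm(k)$ of the underlying presheaf agrees with the underlying presheaf of the Nisnevich sheafification with transfers. This is exactly the foundational compatibility of Nisnevich sheafification with the forgetful functor for presheaves with transfers, due to Déglise — the ``foundational result'' alluded to in the introduction — and it is the one non-formal ingredient of the whole argument; everything else is the adjunction and the monoidality of $M$. A minor point worth checking in passing is that the ``$\Gm$'' appearing in the definition of $\ul h_i(E)_j$ really is $M$ of the ``$\Gm$'' used for $\ul\pi_i$, which is immediate from the commutative square defining $M \colon \SH(k) \to \DM(k)$.

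Finally, $t$-exactness follows formally. Combining the identity with the observation that $U$ on sheaves detects zero objects, we get $UE \in \SH(k)_{\ge 0}$ iff $U(\ul h_i(E)_j) = 0$ for all $i<0$ and all $j$ iff $\ul h_i(E)_j = 0$ for all $i<0$ and all $j$ iff $E \in \DM(k)_{\ge 0}$, and symmetrically with ``$>$'' in place of ``$<$'' for $\SH(k)_{\le 0}$ versus $\DM(k)_{\le 0}$. Hence $U$ is $t$-exact, and in fact reflects both connectivity and co-connectivity. The expected main obstacle is thus not in the bookkeeping but in correctly invoking the sheafification–forgetful compatibility result of Déglise.
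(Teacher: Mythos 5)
Your proof is correct and follows essentially the same route as the paper: identify the defining presheaves via the $(M,U)$-adjunction and the monoidality of $M$, reduce everything to the fact that Nisnevich sheafification is compatible with forgetting transfers, and then read off $t$-exactness (and detection of zero objects) directly from the definitions of the two $t$-structures. One small correction of attribution: that sheafification compatibility is the result cited in the paper as Theorem 13.1 of the Mazza--Voevodsky--Weibel lecture notes (due to Voevodsky), not Déglise, whereas the ``foundational result of Déglise'' from the introduction is the later identification of $\DM(k)^\heart$ with $\SH(k)^{\heart,\eta=0}$.
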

\begin{proof}
It follows from the definitions of the $t$-structures that we need only prove
the ``in fact'' part. Let $\ul \pi_i^{pre}(E)_j$ be the presheaf
$V \mapsto [\Sigma^\infty (V_+) \wedge S^i, \Gm^j \wedge E]$, and similarly for
$\ul h_i^{pre}(E)_j$. Then writing also $U: PreShv(Cor(k)) \to PreShv(Sm(k))$
we get immediately from the definitions that $U(\ul h_i^{pre}(E)_j) = \ul
\pi_i^{pre}(UE)_j$. So we need to show that $U: PreShv(Cor(k)) \to
PreShv(Sm(k))$ commutes with taking the associated sheaf. This is well known,
see e.g. \cite[Theorem 13.1]{lecture-notes-mot-cohom}.
\end{proof}

\begin{corr}[Preliminary form of the Motivic Hurewicz Theorem]
\label{corr:preliminary-hurewicz}
Let $E \in \SH(k)_{\ge 0}$. Then $ME \in \DM(k)_{\ge 0}$ and $\ul h_0(ME)_* =
M^\heart(\ul \pi_0(E)_*)$.
\end{corr}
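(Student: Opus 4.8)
The plan is to obtain this as an immediate application of Theorem~\ref{thm:abstract-hurewicz} to the adjunction $M : \SH(k) \rightleftarrows \DM(k) : U$. The only input of that theorem not yet established is the $t$-exactness behaviour of the two functors, and this is precisely what the preceding Lemma supplies: $U$ is $t$-exact, hence in particular left-$t$-exact. By Lemma~\ref{lemm:t-exactness-adjoints}, its left adjoint $M$ is then right-$t$-exact. (One could also check right-$t$-exactness of $M$ directly from the definition of the homotopy $t$-structures, using that $M$ preserves colimits and sends the representable generators of $\SH(k)_{\ge 0}$ to the corresponding generators of $\DM(k)_{\ge 0}$; but invoking Lemma~\ref{lemm:t-exactness-adjoints} is cleaner.)

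Granting this, the first assertion is just a restatement of right-$t$-exactness of $M$: for $E \in \SH(k)_{\ge 0}$ we have $ME \in \DM(k)_{\ge 0}$. For the second assertion, Theorem~\ref{thm:abstract-hurewicz} applied to $(M, U)$ yields, for $E \in \SH(k)_{\ge 0}$, a natural isomorphism $\pi_0^{\DM(k)}(ME) \approx M^\heart \pi_0^{\SH(k)}(E)$.

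It then remains only to match this with the notation of the present section. By the description of the homotopy $t$-structures recalled above, the abstract homotopy object $\pi_0^{\SH(k)}(E) \in \SH(k)^\heart$ is exactly the homotopy module whose underlying graded sheaf has pieces $\ul \pi_0(E)_j$, i.e.\ what we denote $\ul \pi_0(E)_*$; likewise $\pi_0^{\DM(k)}(ME) = \ul h_0(ME)_*$. Substituting gives $\ul h_0(ME)_* = M^\heart(\ul \pi_0(E)_*)$, as claimed.

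There is essentially no remaining obstacle once Theorem~\ref{thm:abstract-hurewicz} and the $t$-exactness of $U$ are in hand; the entire content of the statement has been front-loaded into those two inputs (and, behind the $t$-exactness of $U$, into the fact that the forgetful functor on presheaf categories commutes with Nisnevich sheafification). If one wanted to name a delicate point, it is the bookkeeping identifying the abstract heart $\SH(k)^\heart$ --- and hence the precise meaning of $M^\heart$ --- with the category of homotopy modules; but this is part of the construction of the homotopy $t$-structure and not something to be reproved here.
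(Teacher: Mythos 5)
Your proposal is correct and follows the paper's own argument essentially verbatim: the paper likewise deduces right-$t$-exactness of $M$ from the $t$-exactness of $U$ (established in the preceding lemma) via Lemma \ref{lemm:t-exactness-adjoints}, and then reads off both assertions from Theorem \ref{thm:abstract-hurewicz}, with the same bookkeeping identifying the abstract homotopy objects with $\ul \pi_0(E)_*$ and $\ul h_0(ME)_*$.
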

Here $\ul \pi_0(E)_*$ denotes the homotopy object in $\SH(k)^\heart$, and
similarly for $\ul h_0(ME)_*$.
\begin{proof}
We know that $M$ is left adjoint to the $t$-exact functor $U$. Hence $M$ is
right-$t$-exact by Lemma \ref{lemm:t-exactness-adjoints}. Thus $ME \in
\DM(k)_{\ge 0}$, and the result about homotopy objects is
just a concrete incarnation of Theorem \ref{thm:abstract-hurewicz}.
\end{proof}

Next we explain how to identify $M^\heart: \SH(k)^\heart \to
\DM(k)^\heart$. To do this, recall the Hopf map $\mathbb{A}^2 \setminus\{0\} \to
\mathbb{P}^1$. In $\SH(k)$ we have the isomorphism $\Sigma^\infty(\mathbb{A}^2
\setminus \{0\}) \wequi \Sigma^\infty(\mathbb{P}^1 \wedge \Gm)$ and hence this defines a stable
map $\eta: \Sigma^\infty(\Gm) \to S$, where $S$ is the sphere spectrum
$\Sigma^\infty(Spec(k)_+)$. If $F \in \SH(k)^\heart$ we put $F\gmtw{n} := \ul \pi_0(F
\wedge \Gm^{\wedge n})$. Consequently there is a natural map $\eta_F = \ul
\pi_0(\eta \wedge \id_F): F\gmtw{1} \to F$.

An important observation is that $M(\eta)$ is the zero map. One may show that
this implies that for
$F \in \DM(k)^\heart$, we have $0 = \eta_{UF}: UF\gmtw{1} \to UF$. We denote by
$\SH(k)^{\heart,\eta=0}$ the full subcategory of $\SH(k)^\heart$ consisting of
objects $F \in \SH(k)^\heart$ with $\eta_F = 0$.

\begin{thm}[Deglise \cite{deglise-htpy-modules}] \label{thm:deglise}
Let $k$ be a perfect field.
The functor $U: \DM(k)^\heart \to \SH(k)^{\heart, \eta=0}$ is an equivalence of
categories.
\end{thm}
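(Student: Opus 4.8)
The plan is to exhibit a functor in the other direction and show it is inverse to $U$. The cleanest approach is to use the adjunction $M^\heart \dashv U^\heart$ from Proposition \ref{prop:adjoints-heart} together with a compatibility with the action of $\Gm$-twisting. Recall that a homotopy module $F \in \SH(k)^\heart$ is (by Morel's theory) equivalent data to a $\ZZ$-graded family $\{F_j\}_{j \in \ZZ}$ of strictly $\Aone$-invariant sheaves together with bonding isomorphisms $F_j \xrightarrow{\simeq} (F_{j+1})_{-1}$ (contraction); the twisting functor $F \mapsto F(1)$ simply shifts the grading, and $\eta$ acts through a canonical map $(F_{j+1})_{-1} \to F_{j+1}$ coming from the structure of Milnor--Witt $K$-theory. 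Likewise, by Voevodsky's theory (and its extension in \cite{RondigsModules}), $\DM(k)^\heart$ is the category of homotopy invariant Nisnevich sheaves with transfers, and these are again governed by a $\ZZ$-graded family with contraction isomorphisms; the point is that such a graded family with transfers is determined by, and can be reconstructed from, a single sheaf with transfers (its $0$-th piece), since the higher pieces are forced by $\Gm$-twisting inside $\DM$.

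First I would make precise the claim that $M(\eta) = 0$ forces $\eta_{UF} = 0$ for $F \in \DM(k)^\heart$: since $\eta\colon \Sigma^\infty\Gm \to S$ becomes $0$ after applying $M$, for any $E \in \DM(k)$ the composite $U(E)(1) \to U(E)$ induced by $\eta$ factors through $U(M(\eta \wedge \id))$ up to the unit/counit, hence vanishes; applying $\ul\pi_0$ gives $\eta_{UF}=0$. This shows $U$ indeed lands in $\SH(k)^{\heart,\eta=0}$, so the functor in the statement is well-defined. Next I would observe that $U^\heart$ is faithful: $U\colon Shv^{tr}(k) \to Shv(k)$ detects zero objects (from the Lemma above), hence is faithful on the abelian category, and this passes to the hearts. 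To get full faithfulness and essential surjectivity I would argue that the inverse functor is (a suitable version of) the ``free sheaf with transfers'' construction $\ZZtr$ applied piece-by-piece to the graded family, sheafified and made homotopy invariant; concretely, given $F \in \SH(k)^{\heart,\eta=0}$ with graded pieces $F_j$, one forms $M^\heart F$ and checks that $U^\heart M^\heart F \cong F$ using that $\eta_F = 0$ exactly cancels the discrepancy between $\ZZtr$ applied to the representing sheaves of $\SH$ versus the actual homotopy module structure.

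The main obstacle — and the part that needs genuine input rather than formal nonsense — is the essential surjectivity, i.e. showing that every $\eta$-trivial homotopy module underlies a sheaf with transfers, and that this lift is unique. This is where Deglise's theorem really lies: the issue is that a priori a strictly $\Aone$-invariant sheaf need not admit transfers, and even if each graded piece does, the transfer structures on different pieces must be compatible with the contraction isomorphisms. Morel's explicit description of strictly $\Aone$-invariant sheaves in terms of unramified $\ul K^{MW}_*$-modules, together with the fact that the quotient $\ul K^{MW}_*/\eta = \ul K^{M}_*$ carries transfers (Milnor $K$-theory / Rost's cycle modules), is what makes the lift possible; one identifies the category of $\eta$-trivial homotopy modules with $\ul K^M_*$-modules in an appropriate sense and matches this with homotopy invariant sheaves with transfers via Voevodsky's/Deglise's cycle-module formalism. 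I would therefore structure the proof as: (1) reduce to graded pieces via Morel's and Voevodsky's structure theorems; (2) show $\eta = 0$ makes each piece a $\ul K^M_*$-module and the bonding maps $\ul K^M_*$-linear; (3) invoke Deglise's equivalence between such data and homotopy invariant sheaves with transfers to conclude; (4) check this equivalence is implemented by $U^\heart$ using the functoriality already set up. Steps (1), (2), (4) are essentially bookkeeping with the definitions recalled above; step (3) is the cited theorem of Deglise and is where all the real content sits, so in the paper I would simply quote \cite{deglise-htpy-modules} for that and spell out only the translation between his setup and ours.
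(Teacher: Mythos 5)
Your proposal ends up exactly where the paper does: after translating between Morel's identification of $\SH(k)^\heart$ with homotopy modules and the analogous identification of $\DM(k)^\heart$ with homotopy modules with transfers, all the real content is quoted from Deglise, which is precisely the paper's (one-line) proof, so the approaches are essentially the same. One minor caveat: your parenthetical claim that a graded family with transfers is recoverable from its $0$-th piece is neither needed nor true in general (the contraction isomorphisms determine lower, not higher, pieces), but nothing in your outline actually relies on it.
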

\begin{proof}
Modulo identifying $\SH(k)^\heart = \Pi_*(k)$ and $\DM(k)^\heart =
\Pi^{tr}_*(k)$ this is Theorem 1.3.4 of Deglise. The first identification is
explained in \cite[Theorem 5.2.6]{morel-trieste}. The second one is obtained by
adapting loc. cit.
\end{proof}

\begin{corr}
For $F \in \SH(k)^{\heart,\eta=0}$ we have $UM^\heart F = F$.
\end{corr}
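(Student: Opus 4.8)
The plan is to reduce this to a composition of two facts already available: the preliminary Hurewicz theorem (Corollary \ref{corr:preliminary-hurewicz}) and Deglise's theorem (Theorem \ref{thm:deglise}). Concretely, for $F \in \SH(k)^{\heart,\eta=0}$ we want to compute $U M^\heart F$ and identify it with $F$.

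First I would observe that $M^\heart F = \ul h_0(MF)_* \in \DM(k)^\heart$, essentially by definition of $M^\heart$ (and this is consistent with Corollary \ref{corr:preliminary-hurewicz} since $F \in \SH(k)^\heart \subset \SH(k)_{\ge 0}$). Then I would apply the $t$-exact functor $U : \DM(k) \to \SH(k)$, using the Lemma just before Corollary \ref{corr:preliminary-hurewicz} which says $U$ is $t$-exact, so $U$ sends $\DM(k)^\heart$ into $\SH(k)^\heart$ and commutes with $\ul\pi_0$. This already tells us $U M^\heart F \in \SH(k)^{\heart,\eta=0}$, the last point because for any $G \in \DM(k)^\heart$ one has $\eta_{UG} = 0$ (an observation recorded in the text preceding Theorem \ref{thm:deglise}). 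So both $F$ and $U M^\heart F$ live in $\SH(k)^{\heart,\eta=0}$, where by Theorem \ref{thm:deglise} the functor $U : \DM(k)^\heart \xrightarrow{\sim} \SH(k)^{\heart,\eta=0}$ is an equivalence.

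The main step, then, is to produce the natural isomorphism $U M^\heart F \cong F$. Since $U$ restricted to hearts is an equivalence, it suffices to exhibit an isomorphism $M^\heart F \cong M^\heart (U G)$ for the $G \in \DM(k)^\heart$ with $UG \cong F$, or equivalently to show that $M^\heart \circ U \cong \id$ on $\DM(k)^\heart$; combined with the equivalence this yields $U M^\heart F \cong U M^\heart U G \cong U G \cong F$. To see $M^\heart U G \cong G$: by Proposition \ref{prop:adjoints-heart} the pair $(M^\heart, U^\heart) = (M^\heart, U)$ on hearts is adjoint, and since $U$ on hearts is fully faithful (being an equivalence onto $\SH(k)^{\heart,\eta=0}$), the counit $M^\heart U G \to G$ is an isomorphism by the standard fact that the counit of an adjunction is an isomorphism exactly when the right adjoint is fully faithful. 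Chasing this counit through $U$ and using that $U$ on hearts is an equivalence produces the desired natural isomorphism $U M^\heart F \cong F$, and naturality is automatic since every arrow involved (unit/counit, the equivalence, $\ul\pi_0$) is natural.

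The step I expect to require the most care is the bookkeeping that the relevant counit $M^\heart U G \to G$ really is \emph{the} counit of the $(M^\heart, U)$-adjunction from Proposition \ref{prop:adjoints-heart}, rather than some a priori different map, and checking that ``$U$ fully faithful on hearts'' is legitimately extracted from Theorem \ref{thm:deglise} (which states $U : \DM(k)^\heart \to \SH(k)^{\heart,\eta=0}$ is an equivalence — in particular fully faithful into that subcategory, hence fully faithful into $\SH(k)^\heart$). Everything else — $t$-exactness of $U$, right-$t$-exactness of $M$, the vanishing $\eta_{UG}=0$ — is quoted verbatim from earlier in the section, so no real obstacle remains there.
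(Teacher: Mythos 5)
Your proof is correct and follows essentially the same route as the paper: write $F = UF'$ using Theorem \ref{thm:deglise}, then exploit the adjunction $(M^\heart, U)$ from Proposition \ref{prop:adjoints-heart} together with full faithfulness of $U$. The only difference is cosmetic --- you invoke the standard fact that a fully faithful right adjoint forces the counit $M^\heart U \to \id$ to be an isomorphism, whereas the paper verifies exactly this by an explicit Hom computation and the Yoneda lemma.
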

\begin{proof}
By the theorem we may write $F = UF'$. Using the fact that $M^\heart$ is left
adjoint to $U$ ($= U^\heart$) by Proposition \ref{prop:adjoints-heart},
we compute $[M^\heart F, T] = [M^\heart UF', T] =
[UF', UT] = [F', T]$, where the last equality is because $U$ is fully faithful (by the
theorem). Thus $M^\heart F = F'$ by the Yoneda lemma, and finally $UM^\heart F = UF' = F$.
\end{proof}

\begin{corr} \label{corr:Mheart}
For $F \in \SH(k)^\heart$ we have $UM^\heart(F) = F/\eta$, where $F/\eta$
denotes the cokernel of $\eta_F: F\gmtw{1} \to F$ in the abelian category
$\SH(k)^\heart$.
\end{corr}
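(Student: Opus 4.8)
The plan is to reduce the general case to the previous corollary, by showing on the one hand that $F/\eta \in \SH(k)^{\heart,\eta=0}$ and on the other that the quotient map $q\colon F \twoheadrightarrow F/\eta$ induces an isomorphism $M^\heart q\colon M^\heart F \xrightarrow{\ \sim\ } M^\heart(F/\eta)$.

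First I would record two structural facts about the heart. Since $-\wedge \Gm$ is $t$-exact on $\SH(k)$ — it merely shifts the weight index $j$ in the definition of the $\ul\pi_i(-)_j$, and $\Gm$ is $\wedge$-invertible — the functor $(1)=\ul\pi_0(-\wedge\Gm)$ is an exact autoequivalence of $\SH(k)^\heart$, and $F\mapsto \eta_F$ is a natural transformation $(1)\Rightarrow \id$ of functors $\SH(k)^\heart\to\SH(k)^\heart$. Granting these, let $q\colon F\twoheadrightarrow F/\eta$ be the cokernel of $\eta_F$, so that $q\circ\eta_F=0$. Applying $(1)$ produces an epimorphism $q(1)\colon F(1)\twoheadrightarrow (F/\eta)(1)$, and naturality of $\eta$ gives $\eta_{F/\eta}\circ q(1)=q\circ\eta_F=0$; since $q(1)$ is an epimorphism this forces $\eta_{F/\eta}=0$, i.e. $F/\eta\in\SH(k)^{\heart,\eta=0}$.

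Next I would show $M^\heart q$ is an isomorphism. Under the identification $F(1)=\Sigma^\infty\Gm\wedge F$ afforded by the $t$-exactness of $-\wedge\Gm$, the map $\eta_F$ is simply $\eta\wedge\id_F$. As $M$ is symmetric monoidal and $M(\eta)=0$ in $\DM(k)$, we get $M(\eta_F)=M(\eta)\wedge\id_{MF}=0$, hence $M^\heart(\eta_F)=\ul h_0(M\eta_F)=0$ in $\DM(k)^\heart$. Now $M^\heart$ is right exact by Proposition \ref{prop:adjoints-heart}, so applying it to the exact sequence $F(1)\xrightarrow{\eta_F}F\xrightarrow{q}F/\eta\to 0$ yields an exact sequence $M^\heart(F(1))\xrightarrow{\ 0\ }M^\heart F\xrightarrow{M^\heart q}M^\heart(F/\eta)\to 0$; thus $M^\heart q$ is an epimorphism with zero kernel, i.e. an isomorphism. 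Applying the $t$-exact functor $U$ and then invoking the preceding corollary (legitimately, since $F/\eta\in\SH(k)^{\heart,\eta=0}$) gives $UM^\heart F\cong UM^\heart(F/\eta)=F/\eta$, which is the assertion.

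The one step that requires genuine care — everything else being formal — is the identification $M^\heart(\eta_F)=0$: one must be sure that $\eta_F$, which is defined via $\ul\pi_0$ inside the heart, really is the restriction of $\eta\wedge\id_F$ so that symmetric monoidality of $M$ applies verbatim. This is exactly what the $t$-exactness of $-\wedge\Gm$ delivers, since it places $F(1)$ and the map $\eta\wedge\id_F$ already inside $\SH(k)^\heart$, so that truncation does nothing. (Alternatively, one could avoid the reduction and argue purely with adjoints: $U^\heart$ factors as the Deglise equivalence followed by the inclusion $\SH(k)^{\heart,\eta=0}\hookrightarrow\SH(k)^\heart$, whose left adjoint is $F\mapsto F/\eta$; uniqueness of left adjoints then identifies $M^\heart$ with the composite, and $UM^\heart F=F/\eta$ follows. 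Both routes require establishing $F/\eta\in\SH(k)^{\heart,\eta=0}$, which is the computation in the second paragraph.)
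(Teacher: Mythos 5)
Your argument is correct and is essentially the paper's own proof: apply the right exact $M^\heart$ (and the exact $U$) to $F(1)\xrightarrow{\eta_F}F\to F/\eta\to 0$, observe that $M^\heart(\eta_F)=0$ because $M(\eta)=0$ and $M$ is monoidal, and identify $UM^\heart(F/\eta)=F/\eta$ via the preceding corollary since $F/\eta\in\SH(k)^{\heart,\eta=0}$. The only difference is that you spell out the verifications the paper leaves implicit (that $\eta_F$ is $\eta\wedge\id_F$ inside the heart and that $\eta_{F/\eta}=0$ by naturality), which is fine.
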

\begin{proof}
We have the right exact sequence
\[ F\gmtw{1} \xrightarrow{\eta} F \to F/\eta \to 0. \]
Since $M^\heart$ is left adjoint it is right exact. Also $U$ is exact, so we get the
right exact sequence
\[ UM^\heart F\gmtw{1} \to UM^\heart F \to UM^\heart (F/\eta) \to 0. \]
The first arrow is zero and $UM^\heart(F/\eta) = F/\eta$ by the previous corollary
(note that $F/\eta \in \SH(k)^{\heart,\eta=0}$). The result follows.
\end{proof}

We thus obtain the Hurewicz theorem for $\SH(k) \to \DM(k)$.
\begin{thm}[Final Version of the Motivic Hurewicz Theorem] \label{thm:hurewicz}
Let $k$ be a perfect field and $E \in \SH(k)_{\ge 0}$.

Then $ME \in \DM(k)_{\ge 0}$ and modulo the identification of $\DM(k)^\heart$ as a full subcategory of
$\SH(k)^\heart$ (via Theorem \ref{thm:deglise}) we have
\[ \ul h_0(ME)_* = \ul \pi_0(E)_*/\eta. \]
\end{thm}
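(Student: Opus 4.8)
The plan is to assemble the Hurewicz theorem directly from the three ingredients already established in this section: the preliminary Hurewicz theorem (Corollary \ref{corr:preliminary-hurewicz}), Deglise's equivalence (Theorem \ref{thm:deglise}), and the identification of $M^\heart$ given in Corollary \ref{corr:Mheart}. The only genuinely new content is to match up the notation, so the proof should be short.

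First I would recall that by Corollary \ref{corr:preliminary-hurewicz}, since $E \in \SH(k)_{\ge 0}$, we have $ME \in \DM(k)_{\ge 0}$ and a natural isomorphism $\ul h_0(ME)_* = M^\heart(\ul \pi_0(E)_*)$ in $\DM(k)^\heart$. The statement of the theorem, however, is phrased after identifying $\DM(k)^\heart$ with the full subcategory $\SH(k)^{\heart,\eta=0} \subset \SH(k)^\heart$ via the equivalence $U$ of Theorem \ref{thm:deglise}. So what must be shown is that, under this identification, the object $M^\heart(\ul \pi_0(E)_*)$ corresponds to $\ul \pi_0(E)_*/\eta$. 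Concretely, this means I should compute $U$ applied to $\ul h_0(ME)_* = M^\heart(\ul\pi_0(E)_*)$.

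Second, I would invoke Corollary \ref{corr:Mheart} with $F = \ul \pi_0(E)_*$: it gives precisely $U M^\heart(\ul\pi_0(E)_*) = \ul\pi_0(E)_*/\eta$, where the right-hand side is the cokernel of $\eta_F: F(1) \to F$ in the abelian category $\SH(k)^\heart$. Since by Corollary \ref{corr:Mheart} (and the remark in its proof) the cokernel $F/\eta$ lies in $\SH(k)^{\heart,\eta=0}$, it is indeed in the essential image of the fully faithful functor $U$, and hence it determines an object of $\DM(k)^\heart$ which is exactly $\ul h_0(ME)_*$. Naturality in $E$ is inherited from the naturality of all the isomorphisms involved (Theorem \ref{thm:abstract-hurewicz}, and the right-exactness arguments of Corollary \ref{corr:Mheart}). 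This completes the proof.

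I do not anticipate a serious obstacle; the work has all been done in the preceding corollaries. The one point requiring a little care is purely bookkeeping: making sure that "the identification of $\DM(k)^\heart$ as a full subcategory of $\SH(k)^\heart$" in the theorem statement is literally the functor $U^\heart$ of Theorem \ref{thm:deglise}, so that computing $U$ of both sides of the isomorphism from Corollary \ref{corr:preliminary-hurewicz} is the correct thing to do, rather than some adjoint or inverse. Once that is pinned down, the chain $\ul h_0(ME)_* \xmapsto{\ U\ } U M^\heart(\ul\pi_0(E)_*) = \ul\pi_0(E)_*/\eta$ is immediate.
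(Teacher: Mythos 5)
Your proposal is correct and is essentially the paper's own proof: the paper's argument is exactly to combine Corollary \ref{corr:preliminary-hurewicz} with Corollary \ref{corr:Mheart}, with the identification via $U$ of Theorem \ref{thm:deglise} handling the bookkeeping just as you describe. No gaps.
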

\begin{proof}
Combine Corollary \ref{corr:preliminary-hurewicz} with Corollary \ref{corr:Mheart}.
\end{proof}

\section{Homotopy Modules and the Slice Filtration}
\label{sec:homotopy-modules}

In essentially all of this section, the base field $k$ will be assumed perfect.
We restate this assumption with each theorem, but not necessarily otherwise.

We now study in more detail the heart $\SH(k)^\heart$.
First recall some notation. By $Sm(k)$ we denote the symmetric monoidal category
of smooth varieties over a field $k$, monoidal operation being cartesian
product. We write $Shv(k)$ for the abelian closed symmetric monoidal category of
sheaves of abelian groups on $Sm(k)$ in the Nisnevich topology. The monoidal
product comes from the ordinary tensor product of abelian groups. There is a
natural functor $\ZZ\bullet: Sm(k) \to Shv(k)$ sending a smooth variety $Y$ to the
sheaf associated with the presheaf $X \mapsto \ZZ\Hom(X,Y)$. Here $\ZZ\Hom(X,
Y)$ denotes the free abelian group on the set $\Hom(X,Y)$. (In fact this
presheaf is already a Nisnevich sheaf, but we do not need this observation.)
One has $\ZZ X \otimes \ZZ Y \iso \ZZ(X \times Y)$, i.e.
$\ZZ\bullet$ is a monoidal functor.

If $X \in Sm(k)$ and $x \in X$ is a rational point, there is a natural splitting
$\ZZ X \wequi \ZZ(Spec(k)) \oplus \ZZ(X, x)$ (defining the last term). We put
$\ZZ \Gm = \ZZ(\Aone \setminus \{0\}, 1)$.
An easy computation shows that for any $F \in Shv(k), U \in Sm(k)$ one has a
natural splitting $F(U \times \Gm) = F(U) \oplus \Omega F(U)$ (defining the last
term). In fact $\Omega F$ is easily seen to be a sheaf. One verifies without
difficulty that $\Omega F \wequi \iHom(\ZZ \Gm, F)$. (Here $\iHom$ denotes the
right adjoint of $\otimes$.) In other works the notation $\Omega F = F_{-1}$ is
often used; we avoid this for sake of clarity.

A sheaf $F \in Shv(k)$ is called \emph{strictly $\Aone$-invariant} (or
\emph{strictly invariant} if the context is clear) if for all $U \in Sm(k), n
\in \NN$ the natural map $H^n_{Nis}(U, F) \xrightarrow{p^*} H^n_{Nis}(U \times
\Aone, F)$ is an isomorphism. The full subcategory of $Shv(k)$ consisting of
strictly invariant sheaves is denoted $\HI(k)$.

By a \emph{homotopy module} we mean a collection $F_* \in \HI(k), * \in \ZZ$
together with isomorphisms $F_n \xrightarrow{\wequi} \Omega F_{n+1}$. A
morphism of homotopy modules $f_*: F_* \to G_*$ is a collection of morphisms
$f_n: F_n \to G_n$ such that $\Omega f_{n+1} = f_n$ under the natural
identifications. We denote the category of homotopy modules by $\HI_*(k)$. We
study it because of the following result.

\begin{thm}[Morel \cite{morel-trieste}, Theorem 5.2.6] \label{thm:identifying-SHheart}
Let $k$ be perfect and $E \in \SH(k)$. Then for each $n \in \ZZ$, the collection $\ul \pi_n(E)_*$
is a homotopy module. The functor $\SH(k)^\heart \to \HI_*(k), E \mapsto \ul \pi_0(E)_*$
is an equivalence of categories.
\end{thm}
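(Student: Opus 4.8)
The plan is to prove that $\SH(k)^\heart \to \HI_*(k)$, $E \mapsto \ul\pi_0(E)_*$, is an equivalence, and the natural strategy is to exhibit an explicit inverse by producing, from a homotopy module $F_*$, a motivic spectrum $EM(F_*)$ — a kind of ``Eilenberg--MacLane spectrum'' — whose only nonvanishing homotopy object is $F_*$ itself (concentrated in degree $0$). Concretely, for each $n$ the strictly invariant sheaf $F_n$ can be promoted to an $\Aone$-local $S^1$-spectrum (an object of the effective, $\Gm$-free stable homotopy category) since strictly invariant sheaves are exactly the hearts of the $S^1$-stable homotopy $t$-structure; the identifications $F_n \cong \Omega F_{n+1}$ then give bonding maps $F_n \to \iHom(\ZZ\Gm, F_{n+1})$, equivalently $\Gm \wedge F_n \to F_{n+1}$, allowing one to assemble the $F_n$ into a genuine $\Gm$-spectrum, i.e.\ an object of $\SH(k)$. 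One checks this object lies in the heart and that $\ul\pi_0$ of it recovers $F_*$.

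First I would recall (from Morel's work, which the paper is already citing) the $S^1$-stable picture: the category of strictly $\Aone$-invariant Nisnevich sheaves $\HI(k)$ is the heart of a $t$-structure on the category $\SH^{S^1}(k)$ of $S^1$-spectra of pointed Nisnevich sheaves, and the Eilenberg--MacLane functor $F \mapsto H(F)$ gives a $t$-exact section. Next I would observe that the functor $\Omega = \iHom(\ZZ\Gm, -)$ on $\HI(k)$ is induced by the right adjoint to $\Gm \wedge -$ on $\SH^{S^1}(k)$, and that $\Gm \wedge -$ is right-$t$-exact, so on hearts we get an adjunction $(\Gm\wedge -)^\heart \dashv \Omega$; in particular a homotopy module structure $F_n \xrightarrow{\sim}\Omega F_{n+1}$ is the same datum as a compatible system of structure maps $(\Gm\wedge -)^\heart F_n \to F_{n+1}$ which are adjoint to isomorphisms. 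This is exactly a $\Gm$-spectrum object $HF_*$ in $S^1$-spectra, hence (after stabilising) an object $E(F_*) \in \SH(k)$. One then computes $\ul\pi_i(E(F_*))_j$ levelwise: because each level is an Eilenberg--MacLane object and the bonding maps are the structure isomorphisms of the homotopy module, $\ul\pi_i(E(F_*))_j = 0$ for $i \neq 0$ and $\ul\pi_0(E(F_*))_j = F_j$. So $E(F_*) \in \SH(k)^\heart$ and $\ul\pi_0(E(F_*))_* = F_*$.

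To finish, I would check the two composites are the identity. The composite $\HI_*(k) \to \SH(k)^\heart \to \HI_*(k)$ is the identity by the homotopy-object computation just described, naturally in $F_*$. For the other composite, given $E \in \SH(k)^\heart$ I would produce a natural map $E \to E(\ul\pi_0(E)_*)$ — for instance via the universal property of the Eilenberg--MacLane levels, or by using that $E$, being in the heart, is $0$-connective and $0$-coconnective so the unit/counit of the EM adjunction at each level is the relevant truncation — and observe it is an isomorphism on all $\ul\pi_i(-)_j$, hence an isomorphism in $\SH(k)$. Since both functors are additive and we have natural isomorphisms of the composites with the identities, they are mutually inverse equivalences.

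The main obstacle, I expect, is the careful bookkeeping in assembling the $\Gm$-spectrum: one must verify that the abelian-category data of a homotopy module (the sheaf isomorphisms $F_n \cong \Omega F_{n+1}$) genuinely lifts to coherent structure maps at the level of $S^1$-spectra, i.e.\ that the Eilenberg--MacLane construction $F \mapsto HF$ is functorial and monoidal enough that $\Gm \wedge HF_n \to HF_{n+1}$ can be chosen compatibly, and that the resulting object is $\Aone$-local. In the setting of this paper this is essentially Morel's Theorem 5.2.6 of \cite{morel-trieste}, so rather than reprove it I would cite that result directly and only indicate the shape of the argument as above; the genuinely new content of the present paper lies in what is built on top of this identification.
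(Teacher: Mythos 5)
The paper offers no proof of this statement at all — it is an attributed result, cited directly from Morel's Theorem 5.2.6 in \cite{morel-trieste} — and your proposal ultimately does the same, with your Eilenberg--MacLane sketch being a faithful outline of how Morel's own argument constructs the inverse functor. So your approach is essentially identical to the paper's (citation), and the preliminary sketch is a correct account of the underlying construction.
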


In particular the category $\HI_*(k)$ is abelian. We can clarify the abelian
structure as follows. Recall that $\HI(k)$ is the heart of the $S^1$-stable
$\Aone$-homotopy category \cite[Lemma 4.3.7(2)]{morel-trieste}. By construction,
the inclusion $\HI(k) \to Shv(k)$ is exact. This means that for any morphism
of strictly invariant sheaves, the kernel and image (computed in $Shv(k)$) are
strictly invariant. Finally the functor $\omega^n: \HI_*(k) \to \HI(k), F_*
\mapsto F_n$ is also exact.

Homotopy modules have a lot of structure. The zeroth homotopy module of the
sphere spectrum is denoted $\ul K_*^{MW} := \ul \pi_0(S)_*$ and called
\emph{unramified Milnor-Witt K-theory}. It has been explicitly described by Morel
\cite[Chapter 3]{A1-alg-top}. For $n \ge 1$ there exist natural surjections $\ZZ \Gm^{\otimes n}
\twoheadrightarrow \ul K_n^{MW}$. Let $F_* \in \HI_*(k), m \in \ZZ$. The
isomorphism $\Omega F_m \wequi F_{m-1}$ induces by adjunction a pairing $\ZZ
\Gm \otimes F_{m - 1} \to F_m$. It turns out that this pairing factorises
through the surjection $\ZZ \Gm \twoheadrightarrow \ul K_1^{MW}$.

Now quite generally given any three sheaves $F, G, H$ there exists a natural
morphism
$\iHom(F, G) \otimes H \to \iHom(F, G \otimes H)$. Thus the pairing $\ul K_1^{MW}
\otimes F_m \to F_{m+1}$ induces $\Omega^2 \ul K_1^{MW} \otimes F_m \to \Omega^2
F_{m+1}$, i.e. $\ul K_{-1}^{MW} \otimes F_m \to F_{m-1}$. The graded ring sheaf
$\ul K_*^{MW}$ is generated by $\ul K_1^{MW}$ and $\eta \in \ul K_{-1}^{MW}$, so
there is at most one possible extension of these two pairings to a total pairing
$\ul K_*^{MW} \otimes F_* \to F_{* + *}$. It turns out that this paring always
exists. In fact $\HI_*(k) \wequi \SH(k)^\heart$ inherits a monoidal structure we
denote $\wedge$ to avoid confusion. For any $F, G \in \HI_*(k)$ there exists a
natural morphism $F_n \otimes G_m \to (F \wedge G)_{n+m}$. Since $\ul K_*^{MW} =
\ul \pi_0(S)_*$ is the unit object, we obtain a pairing $\ul K_n^{MW} \otimes F_m
\to (\ul K^{MW} \wedge F)_{n+m} \wequi F_{n+m}$. One may prove that these two
pairings just constructed coincide.

Homotopy modules also have transfers, though in a weaker sense than Voevodsky's
sheaves with transfers. For this, denote by $\mathcal{F}_k$ the
category of fields of finite transcendence degree over $k$. If $L \in
\mathcal{F}_k$ (or more generally $L$ essentially smooth over $k$) and $F \in
Shv(k)$, then $F(L)$ can be defined by an appropriate colimit. If $L'/L$ is any
finite extension (still $L \in \mathcal{F}_k$), $n \in \ZZ$ and $F_* \in \HI_*(k),$ then
Morel has constructed the \emph{cohomological transfer} \cite[Chapter 4]{A1-alg-top}
$tr_{L'/L}: F_n(L') \to F_n(L)$. (Actually, this only works in characteristic
not two. See Subsection \ref{subsec:interaction-boundary-transfer} for the
correct definition in full generality.)

If $F$ is a strictly invariant sheaf then $F$ is unramified
\cite[Lemma 6.4.4]{morel2005stable}. (In particular
for connected $X \in Sm(k),$ the natural map $F(X) \to F(k(X))$ is injective.The
notion of unramified presheaves is reviewed in more detail at the beginning of
Subsection \ref{subsec:app-overview}.)
Thus $F = 0$ if and only if $F(L) = 0$ for all $L \in \mathcal{F}_k$. Since
$\HI(k)$ is abelian it follows that if $F$ is a strictly invariant sheaf and $G,
H \subset F$ are strictly invariant subsheaves, then $G = H$ if and only if
$G(L) = H(L)$ for all $L \in \mathcal{F}_k$ (see the beginning of Subsection
\ref{subsec:app-overview} for the definition of $F(L)$ for $L$ not of finite
type over $k$).

Now let $F \otimes G \to H$ be a pairing, where $H$ is a strictly invariant
sheaf with transfers for finite field extensions. We shall write
\[ (FG)^{tr}(L) := \langle tr_{L'/L} (F(L') G(L')) \rangle_{L'/L \text{ finite}} \subset H(L). \]
It follows from the above discussion that there exists at most one
strictly invariant sheaf $(FG)^{tr} \subset H$ with the above sections over fields.

Now recall the slice filtration
\cite[Section 2]{voevodsky-slice-filtration}. Write
$\SH(k)^{eff}(i)$ for the localising subcategory of $\SH(k)$ generated by
$(\Sigma^\infty X_+)\wedge \Gm^{\wedge i}$ for all $X \in Sm(k)$. The inclusion
$\SH(k)^{eff}(i) \hookrightarrow \SH(k)$ commutes with arbitrary sums by
construction and so has a right adjoint $f_i$ by Neeman's version of Brown
representability. The object $f_i E$ is called the $i$-th \emph{slice cover} of
$E$. It is easy to see that there is a commutative diagram of natural transformations
\begin{equation*}
\begin{CD}
f_i @>>> f_{i-1} \\
@VVV     @VVV    \\
\id @=   \id.
\end{CD}
\end{equation*}

We call $E$ such that $E \in \SH(k)^{eff}(n)$ for some $n$ (equivalently $E =
f_nE$) \emph{slice-connective}.

Suppose that $E \in \SH(k)_{\ge 0}$. We want to define a filtration on $\ul
\pi_0(E)_*$. We shall put
\[ F_N \ul \pi_0(E)_* := im(\ul \pi_0(f_{-N}E)_* \to \ul
     \pi_0(E)_*) \subset \ul \pi_0(E)_*. \]
(Image computed in the abelian category $\HI_*(k)$.)
There is now the following interesting result. (We warn the reader that Levine uses
somewhat different indexing conventions than we do.)
\begin{thm}[Levine \cite{levine-slice}, slightly adapted Theorem 2] \label{thm:levine-slice}
Let $k$ be a perfect field of characteristic different from $2$ and $E \in \SH(k)_{\ge 0}$. Then for
$m \ge i$ and any perfect field extension $F/k$ we have
\begin{equation} \label{eq:tr-closure-formula}
 (F_i \ul \pi_0(E)_*)_m(F) = (\ul K_{m-i}^{MW} \ul \pi_0(E)_i)^{tr}(F).
\end{equation}
\end{thm}

Note that if $k$ has characteristic zero, then the above theorem implies that
there is a (unique) strictly invariant sheaf $(\ul K_{m-i}^{MW} \ul
\pi_0(E)_i)^{tr}$ with sections over fields given by formula
\eqref{eq:tr-closure-formula}. In
characteristic $p>0$ the result is not quite strong enough, but it is good
enough if we invert $p$. In Appendix \ref{sec:constructing-F*}, we prove the
following variant.

\begin{thm}\label{thm:slice-filtration}
Let $H_* \in \HI_*(k)$ be a homotopy module over the perfect field $k$.
\begin{enumerate}[(1)]
\item For all $m \in \ZZ, n \ge 0$ there exists a (unique) strictly invariant
      sheaf
      $(\ul K_n^{MW} H_m)^{tr} \subset H_{n+m}$ which has sections
      over fields given by formula \eqref{eq:tr-closure-formula}.

\item We have that $(\ul K_n^{MW} H_m)^{tr} = \Omega(\ul K_{n+1}^{MW}
      H_m)^{tr} \subset H_{n+m}$ for all $m \in \ZZ, n \ge 0$.
\end{enumerate}

Put \[ (\tilde{F}_nH)_m = \begin{cases}
   (\ul K^{MW}_{m-n} H_n)^{tr} &: m > n \\
   H_m &: m \le n
\end{cases}. \]

\begin{enumerate}[(1)]
\setcounter{enumi}{2}
\item $(\tilde{F}_n H)_*$ is a homotopy submodule of $H_*$.
\item If $H = \ul\pi_0(E)_*$, for $E \in \SH(k)^{cpt}_{\ge 0}$, then the
      filtration $\tilde{F}_\bullet H$ is finite: there exists $N >> 0$ with $\tilde{F}_N H =
      H$.
\end{enumerate}
\end{thm}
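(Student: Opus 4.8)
The statement has four parts and the real content is (1), (2) and (4); part (3) is essentially bookkeeping once (1) and (2) are in place. Part (1) asserts existence of the transfer-closure $(\ul K_n^{MW} H_m)^{\tr}$ as a strictly invariant subsheaf of $H_{n+m}$; part (2) is the compatibility $(\ul K_n^{MW} H_m)^{\tr} = \Omega(\ul K_{n+1}^{MW} H_m)^{\tr}$, which is exactly what makes the candidate sections over fields assemble into a homotopy submodule. The plan is to reduce everything to Levine's Theorem \ref{thm:levine-slice} after inverting the characteristic, and then run a separate argument at the residual prime $p$; the appendix (which I cannot see) presumably does the $p$-local part by hand, so here I would emphasise the structure and point to where the computation lives.

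\textbf{Steps.} First I would fix $H_* \in \HI_*(k)$ and, using Theorem \ref{thm:identifying-SHheart}, realise $H_* = \ul\pi_0(E)_*$ for some $E \in \SH(k)^\heart$; in particular $E \in \SH(k)_{\ge 0}$, so Levine's theorem applies away from the prime $2$ and, after tensoring with $\ZZ[1/p]$, at all primes different from $p$. From Levine we read off that the presheaf $L \mapsto (\ul K_{m-i}^{MW}\ul\pi_0(E)_i)^{\tr}(L)$ on $\mathcal{F}_k$ is the restriction to fields of the strictly invariant sheaf $(F_i\ul\pi_0(E)_*)_m$; since strictly invariant sheaves are unramified (\cite[Lemma 6.4.4]{morel2005stable}), a strictly invariant subsheaf of $H_{n+m}$ is determined by its sections over fields, so this already gives existence in part (1) after inverting $p$. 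To handle $p$ itself one uses that $\ul K_*^{MW}$ and all strictly invariant sheaves over a perfect field of characteristic $p$ have bounded $p$-torsion behaviour — concretely, $\eta$ and the $p$-th power map interact in a controlled way — and the appendix carries out the explicit construction of $(\ul K_n^{MW} H_m)^{\tr}$ as an honest subsheaf by a direct limit-of-transfers argument, checking strict invariance by hand (transfers preserve strict invariance by Morel's work \cite[Chapter 4]{A1-alg-top}). Part (2) then follows by applying $\Omega = \iHom(\ZZ\Gm, -)$ to the inclusion $(\ul K_{n+1}^{MW} H_m)^{\tr} \subset H_{n+m+1}$: $\Omega$ is left exact, $\Omega H_{n+m+1} = H_{n+m}$, and on sections over a field $L$ one computes $\Omega(\ul K_{n+1}^{MW} H_m)^{\tr}(L)$ using the splitting $F(L\times\Gm) = F(L)\oplus\Omega F(L)$ together with the multiplication $\ul K_1^{MW}\otimes \ul K_n^{MW} \to \ul K_{n+1}^{MW}$ and the fact that $\ul K_{n+1}^{MW}(L(\mathbb{G}_m))$ is generated over $\ul K_{n+1}^{MW}(L)$ by the class of the coordinate — so that $\Omega$ of the transfer-closure in degree $n+1$ is the transfer-closure in degree $n$. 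Given (1) and (2), part (3) is immediate: $(\tilde F_n H)_m$ is strictly invariant in every degree (it is either all of $H_m$ or the subsheaf just constructed), the compatibility maps $\Omega(\tilde F_n H)_{m+1} \to (\tilde F_n H)_m$ are the restrictions of those for $H$ and are isomorphisms by (2) in the range $m > n$ and trivially for $m \le n$, so $(\tilde F_n H)_*$ is a homotopy submodule.

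\textbf{Finiteness, part (4).} For $E \in \SH(k)^{cpt}_{\ge 0}$ I would argue that $\ul\pi_0(E)_*$ is, in a suitable sense, a compact object of $\HI_*(k)$, using the results recalled in Appendix \ref{sec:compact-objects}. The filtration $\tilde F_\bullet H$ is increasing with $\bigcup_n \tilde F_n H = H$ (in degree $m$, $(\tilde F_n H)_m = H_m$ as soon as $n \ge m$, so the union over $n$ exhausts each degree, but that does not immediately give $\tilde F_N H = H$ for a single $N$ since $H_*$ has infinitely many degrees). So the point is: a compact homotopy module is bounded, i.e. $H_m = 0$ for $|m| \gg 0$, or at least $H_m = 0$ for $m \gg 0$ — because $E$ compact implies $\ul\pi_0(E)_*$ is a quotient of a finitely generated (in the appropriate sense) sub-object of a sum of shifts of $\ul K_*^{MW}$, and such objects vanish in high enough Tate twist. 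Once $H_m = 0$ for all $m > N_0$, we have $(\tilde F_{N_0} H)_m = H_m$ for $m \le N_0$ by definition and $(\tilde F_{N_0} H)_m = (\ul K^{MW}_{m-N_0} H_{N_0})^{\tr} \subset H_m = 0$ for $m > N_0$, so $\tilde F_{N_0} H = H$.

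\textbf{Main obstacle.} The genuinely hard step is the construction of $(\ul K_n^{MW} H_m)^{\tr}$ as a strictly invariant \emph{sheaf} (not merely a presheaf of subgroups of $H(L)$ over fields) integrally, i.e. including the prime $p$ in characteristic $p$ — this is precisely the long computation deferred to Appendix \ref{sec:constructing-F*}, and it is where all the delicate manipulation of Morel's transfers, the unramifiedness criterion, and the interaction of $\eta$ with $p$-th powers takes place. Everything downstream (parts (2), (3), and the finiteness in (4)) is then a formal consequence of (1), the unramifiedness of strictly invariant sheaves, left-exactness of $\Omega$, and the compact-objects lemmas of the appendix.
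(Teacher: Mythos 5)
Your argument for part (4) contains a genuine error. You claim that compactness of $E$ forces $H_m = \ul\pi_0(E)_m = 0$ for $m \gg 0$ ("such objects vanish in high enough Tate twist"). This is false: the sphere spectrum $S$ is compact and $\ul\pi_0(S)_* = \ul K_*^{MW}$ is nonzero in every degree (e.g.\ $\ul K_m^{MW}(k(t_1,\dots,t_m))$ contains the nonzero symbol $[t_1]\cdots[t_m]$). You correctly identified the real difficulty — that $(\tilde F_n H)_m = H_m$ only for $n \ge m$, so degreewise exhaustion does not obviously give a single $N$ with $\tilde F_N H = H$ — but then resolved it with a false boundedness claim. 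The correct resolution is the one Appendix \ref{sec:compact-objects} is designed for: since $E \in \SH(k)^{cpt}_{\ge 0}$, the homotopy module $H = \ul\pi_0(E)_*$ is a \emph{countably compact object of the abelian category} $\HI_*(k)$; the filtration $\tilde F_\bullet H$ is an increasing countable exhaustive filtration (its colimit is $H$, computed degreewise using Ab5), so $\id_H$ factors through some $\tilde F_N H$ by compactness, whence $\tilde F_N H = H$. No vanishing of $H_m$ in high twists is needed or available.

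For parts (1) and (2) your proposal does not actually supply a proof in positive characteristic: you reduce to Levine's theorem, which (as you note) only settles existence in characteristic zero or after inverting $p$, and for the prime $p$ itself you write that "the appendix presumably does the $p$-local part by hand" with vague appeals to "bounded $p$-torsion behaviour" and "$\eta$ and the $p$-th power map interact in a controlled way". But the integral construction at all primes \emph{is} the content of the theorem; deferring it to an unseen appendix leaves the main claim unproved. Concretely, what is missing is: (a) a criterion for extending the field-level data $(\ul K_n^{MW} H_m)^{tr}(L) \subset H_{n+m}(L)$ to an unramified Nisnevich subsheaf, which requires checking compatibility of the specialisation maps with the transfer-closure (this rests on the formula $s^\pi(x) = \langle -1\rangle\partial^\pi([-\pi]x)$ and on showing $\partial^\pi$ carries $(\ul K_n^{MW} H_m)^{tr}$ into $(\ul K_{n-1}^{MW} H_m)^{tr}$, via Morel's twisted transfers and their compatibility with residues); (b) a verification of \emph{strict} invariance, which is not a formal consequence of "transfers preserve strict invariance" but is proved by showing $H^1(\Aone_L, (\ul K_n^{MW} H_m)^{tr}) = 0$ using a truncated Rost–Schmid complex and surjectivity of the residue differential on $\Aone_L$; and (c) part (2), which in the paper is deduced from that same surjectivity via $\Omega F(K) = \partial_0^T(F(K(T)))$ together with unramifiedness — your sketch gestures at this but never proves the surjectivity statement it depends on. As written, the proposal establishes (3) and the characteristic-zero case of (1)–(2), but leaves both the integral construction and the finiteness argument with genuine gaps.
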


(Here $\SH(k)^{cpt}_{\ge 0}$ denotes the set of compact objects in $\SH(k)_{\ge
0}$.)
What this theorem says is that even though we may not be able to understand
the filtration $F_\bullet H$, we can construct a different filtration
$\tilde{F}_\bullet H$ which agrees on perfect fields. There is one set-back
compared to Levine's theorem: the filtration $\tilde{F}_\bullet H$ is completely
algebraic, not geometric in origin. In particular if $E \in \SH(k)^{eff}(-N)_{\ge 0}$ for some
$N$ (i.e. $E$ is slice-connective),
then it is immediate that $F_N \ul \pi_0E_* = \ul \pi_0E_*$, but we do not have such
a nice characterisation for $\tilde{F}_\bullet$. However see (4) of the Theorem.

We write $\SH(k)_e$ for
the e-localised version of $\SH(k)$. This is the full (colocalising) monoidal 
triangulated subcategory of all objects $E \in \SH(k)$ such $E \xrightarrow{e}
E$ is an isomorphism; equivalently all the homotopy sheaves $\ul \pi_i(E)$ are
uniquely e-divisible.

We can finally prove our first conservativity theorem.

\begin{thm}[Conservativity I] \label{thm:conservativity-I}
Let $k$ be a perfect field of finite $2$-étale cohomological dimension and
exponential characteristic $e$, and $E
\in \SH(k)$. Assume that either (a) $E \in \SH(k)_e$ is
connective and slice-connective, or (b) $E$ is compact.

Then if $ME = 0$, one also has $E = 0$.

More specifically, if either (a) or (b) holds, and $ME \in \DM(k)_{\ge n}$ then
also $E \in \SH(k)_{\ge n}$.
\end{thm}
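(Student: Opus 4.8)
The plan is to reduce everything to the Hurewicz-type statement of Corollary \ref{corr:preliminary-hurewicz} together with an analysis of the functor $M^\heart$ via the slice filtration. The key observation is the following refinement of Corollary \ref{corr:hurewicz-conservativity}: if $F \in \SH(k)^\heart$ is nonzero and $M^\heart F = 0$, then we must show this cannot happen under our hypotheses; more precisely I want to show that $M^\heart$ is \emph{conservative} on the relevant subcategory of the heart. Granting such a statement, the final claim follows by induction on connectivity exactly as in Corollary \ref{corr:hurewicz-conservativity}: if $ME \in \DM(k)_{\ge n}$ and $E \in \SH(k)_{\ge n}$ (which holds for some $n$ by the connectivity assumption in case (a), or because compact objects are bounded below in case (b)), then $\ul h_n(ME)_* = M^\heart(\ul\pi_n(E)_*)$ by the Hurewicz theorem applied to $E[-n]$; if this vanishes and $M^\heart$ is conservative then $\ul\pi_n(E)_* = 0$, so $E \in \SH(k)_{\ge n+1}$, and one iterates. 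Non-degeneracy of the homotopy $t$-structure on $\SH(k)$ then forces $E = 0$ (in case (a), after checking the iteration stays inside the relevant subcategory; in case (b), compactness gives boundedness above as well so the iteration terminates).

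So the heart of the matter is: \emph{for which $F \in \SH(k)^\heart$ does $M^\heart F = 0$ imply $F = 0$?} By Corollary \ref{corr:Mheart} we have $UM^\heart F = F/\eta$, and since $U$ detects zero objects, $M^\heart F = 0$ iff $F/\eta = 0$, i.e. iff $\eta_F \colon F(1) \to F$ is an epimorphism in $\SH(k)^\heart \approx \HI_*(k)$. Thus I must show: under the hypotheses, a nonzero homotopy module $H_*$ arising as $\ul\pi_0(E)_*$ for $E$ as in (a) or (b) cannot have $\eta$ acting surjectively. This is where the slice filtration enters. Using Theorem \ref{thm:slice-filtration} (or Levine's Theorem \ref{thm:levine-slice} when $e$ is inverted and $\operatorname{char} k \neq 2$), the homotopy module $H = \ul\pi_0(E)_*$ carries the finite filtration $\tilde F_\bullet H$ with $\tilde F_N H = H$ for $N \gg 0$; the top nonzero graded piece $\operatorname{gr}_i = \tilde F_i H / \tilde F_{i-1} H$ is, over fields, built from transfers of $\ul K^{MW}_{m-i} \cdot \ul\pi_0(E)_i$, i.e. it is a module over $\ul K^{MW}_*$ that in the relevant range looks like a quotient of $\ul K^{MW}_*$-modules induced up from the single sheaf $\ul\pi_0(E)_i \in \HI(k)$. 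On such a "$\ul K^{MW}$-generated" piece, $\eta$-divisibility can be controlled: $\eta$ acts on $\ul K^{MW}_*$ with $\ul K^{MW}_*[\eta^{-1}] \cong \ul W[\eta,\eta^{-1}]$, the Witt sheaf, and the map $\eta \colon \ul K^{MW}_{n+1} \to \ul K^{MW}_n$ is \emph{not} surjective for $n \le 0$ — its cokernel is $\ul K^M_n$ (Milnor K-theory, with $\ul K^M_0 = \ZZ$) by Morel's $\ul K^{MW} = \ul K^M \times_{\ul K^M/2} \ul I$ pullback square. Here is exactly where finite $2$-étale cohomological dimension is used: Voevodsky's resolution of the Milnor conjecture identifies $\ul I^j/\ul I^{j+1} \cong \ul K^M_j/2$ and, combined with $cd_2(k) < \infty$, guarantees that the fundamental-ideal filtration on the Witt sheaf is finite, so that the graded pieces of $H$ cannot be infinitely $\eta$-divisible unless they vanish — forcing $H/\eta \neq 0$ when $H \neq 0$.

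The main obstacle, and the step requiring the most care, is the last one: translating "$H$ has a finite slice-type filtration whose graded pieces are $\ul K^{MW}_*$-modules generated in a single degree" into "$\eta_H$ is not surjective unless $H = 0$". One cannot argue purely graded-piece by graded-piece naively, because surjectivity of $\eta$ on $H$ does not immediately imply surjectivity on subquotients — but one \emph{can} argue with the \emph{bottom} of the filtration: if $\eta_H$ is surjective, pass to the smallest $i$ with $\tilde F_i H \neq 0$; there $\tilde F_i H$ in low weights equals $H_m$ for $m \le i$ and is $\ul K^{MW}$-generated from $\ul\pi_0(E)_i$ in high weights, and one checks that $\eta$ surjective on $H$ forces, via the unramifiedness and the sections-over-fields description, $\eta$ surjective on this bottom piece, which over a field is a quotient of $\ul K^{MW}_* \otimes (\text{fixed group})$ — and there, the computation of $\operatorname{coker}(\eta)$ via Milnor K-theory together with $cd_2 < \infty$ gives a contradiction with $\tilde F_i H \neq 0$. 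I expect the bookkeeping with transfers (the sheaf $(\ul K^{MW}_n H_m)^{tr}$ rather than the naive tensor product) and with the precise indexing of the slice filtration to be the fiddly part, together with verifying in case (a) that the connectivity/slice-connectivity hypotheses are preserved under the downward induction so that Theorem \ref{thm:slice-filtration} keeps applying, and in case (b) that compactness of $E$ is inherited appropriately (or can be replaced by boundedness) to make both the finiteness of $\tilde F_\bullet$ and the termination of the induction go through.
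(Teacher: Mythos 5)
Your reduction to the heart is fine and matches the paper's strategy: the downward induction via Theorem \ref{thm:hurewicz} shows everything hinges on proving that a nonzero homotopy module $H = \ul\pi_n(E)_*$ of the kind appearing here cannot have $\eta$ acting surjectively. But the argument you sketch for that key step has a genuine gap, and in two places would actually fail. First, ``pass to the smallest $i$ with $\tilde F_i H \neq 0$'' is not available: since $(\tilde F_i H)_m = H_m$ for $m \le i$, any homotopy module with $H_i \neq 0$ for all $i$ (e.g.\ $H = \ul K^{MW}_*$, the case $E = S$) has $\tilde F_i H \neq 0$ for every $i \in \ZZ$, so there is no bottom of the filtration to start from; the finiteness you are allowed to use (Theorem \ref{thm:slice-filtration}(4), or slice-connectivity giving $F_N H = H$ in case (a)) is a statement about the \emph{top} of the filtration. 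Second, the structural claim you lean on --- that $\eta$ cannot be surjective on $\ul K^{MW}$-generated pieces because $\operatorname{coker}(\eta)$ is Milnor K-theory --- is false in negative weights: $\eta\colon \ul K^{MW}_{n+1} \to \ul K^{MW}_n$ is surjective (an isomorphism of Witt sheaves) for $n < 0$, and $\ul W$ is a nonzero homotopy module on which $\eta$ is invertible over any field. So no formal cokernel computation can give the contradiction; the hypothesis $cd_2(k) < \infty$ must enter quantitatively, not just through the shape of $\operatorname{coker}(\eta)$.

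The paper closes exactly this gap by using the top of the filtration directly. With $F_N \ul\pi_n(E)_* = \ul\pi_n(E)_*$, for any finitely generated $K/k$ and $r \gg 0$ one computes
\[ \ul\pi_n(E)_{N+r}(K) = (\ul K_r^{MW}\,\ul\pi_n(E)_N)^{tr}(K)
 = (\ul K_r^{MW}\,\eta^r\,\ul\pi_n(E)_{N+r})^{tr}(K)
 = (\ul I^r\,\ul\pi_n(E)_{N+r})^{tr}(K), \]
using Theorem \ref{thm:levine-slice} (or Theorem \ref{thm:slice-filtration} when $e$ is merely inverted), the surjectivity of $\eta$ coming from $\ul h_n(ME)_* = 0$, and the identity $\ul K^{MW}_r \eta^r = \ul I^r$ inside $\ul{GW}$; since every finite extension $L/K$ has $cd_2(L) \le cd_2(K) < \infty$, the resolution of the Milnor conjectures gives $I^r(L) = 0$ for $r$ large, so the right-hand side vanishes, and $\eta$-surjectivity then propagates the vanishing to all weights, whence $\ul\pi_n(E)_* = 0$ by unramifiedness. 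This is the missing argument in your outline. Beyond it, your ``fiddly parts'' also conceal real content that the paper supplies separately: compact objects are connective (via compact generation of $\SH(k)$), the case $e = 2$ is handled by noting $\SH(k)_2^- = 0$ when $cd_2(k) < \infty$ so that Corollary \ref{corr:hurewicz-conservativity} applies, and for $e > 2$ under hypothesis (a) one needs Lemma \ref{lemm:inj-p} to pass to perfect closures of finitely generated extensions so that the filtration theorems (stated over perfect fields) apply.
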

\begin{proof}
As a preparatory remark, let us note that compact objects are connective and
sllice-connective. Indeed $\SH(k)$ is generated (as a localising
subcategory) by the compact objects $\Sigma^\infty X_+(i)$ for $i \in \ZZ$ and
$X \in Sm(k)$ (see e.g. \cite{RondigsModules}, Lemma 2.27 and paragraph
thereafter); it follows from general results
\cite[Lemma 2.2]{Neeman1992} that $\SH(k)^{cpt}$ is the thick triangulated subcategory generated by
the same objects. It follows that $E \in \SH(k)^{cpt}$ is obtained using
finitely many operations from (finitely many) objects of the form $\Sigma^\infty
X_+(i)$, and all of these are connective \cite{morel2005stable} and
slice-connective. It remains to observe that the subcategories of connective and
slice-connective spectra are also thick.

We shall prove: if $E \in \SH(k)_{\ge n}$, (a) or (b) holds, and $\ul h_n(ME)_* = 0$ then $E
\in \SH(k)_{\ge n+1}$. This is the ``more specifically'' part. It follows that
if actually $ME = 0$ then $\ul \pi_i(E)_j = 0$ for all $i$ and $j$
and so $E \wequi 0$, since weak equivalences in $\SH(k)$ are detected by $\ul
\pi_*(\bullet)_*$, essentially by construction \cite[Proposition
5.1.14]{morel-trieste}.

The first claim of the theorem then follows, since under (a) we assume $E$
connective, and under (b) we assume $E$ compact and all compact objects are
connective. 

To explain the general argument, let us assume for now that $k$ has
characteristic zero. We will indicate at the end what needs to be changed in
positive characteristic.

By unramifiedness of strictly invariant sheaves, it is enough to show that $\ul
\pi_n(E)_*(K) = 0$ for all finitely generated field extensions $K/k$. We consider the
filtration $F_\bullet \ul \pi_n(E)_*$. We know that $F_N \ul \pi_n(E)_* = \ul
\pi_n(E)_*$ for some $N$ sufficiently large, by slice-connectivity. I claim that for $r
>> 0$ we have $\ul \pi_n(E)_{N + r}(K) = 0$. This claim proves the result.
Indeed we have $\ul \pi_n(E)_*/\eta = 0$ by the Hurewicz Theorem \ref{thm:hurewicz}, so $\eta$ is
surjective, whence the claim implies that $\ul \pi_n(E)_i(K) = 0$ for all $i$.

We compute
\begin{align*}
 \ul \pi_n(E)_{N+r}(K) &= (\ul K_r^{MW} \ul \pi_n(E)_N)^{tr}(K) \\
                &= (\ul K_r^{MW} \eta^r \ul \pi_n(E)_{N+r})^{tr}(K) \\
                &= (\ul I^r \ul \pi_n(E)_{N+r})^{tr}(K).
\end{align*}
Here the first equality is by Theorem \ref{thm:levine-slice},
the second is by surjectivity of
$\eta$, and the third is essentially by definition. Here we make use of the
\emph{sheaf of unramified fundamental ideals} $\ul{I}$ and its powers $\ul{I}^r$.
We can define it quickly as $\ul{I} = im(\eta: \ul{K}_1^{MW} \to
\ul{K}_0^{MW})$. It is easy to see that $\ul{I}^r = im(\eta^r:\ul{K}_r^{MW} \to
\ul{K}_0^{MW})$, and also that if $K$ is any field, then $\ul{I}^r(K) \subset
\ul{K}_0^{MW}(K) \iso GW(K)$ is given by $I(K)^r$, the $r$-th power of the
fundamental ideal. It is thus enough to show
that for some $r >> 0$ and for all $L/K$ finite, we have $I(L)^r = 0$.
Now $K$ is finitely generated over $k$, so $K$ has finite 2-étale cohomological dimension
\cite[Theorem 28 of Chapter 4]{shatz1972profinite}, say $R$.
Then $L$ also has 2-étale cohomological dimension bounded by $R$ by
loc. cit. Finally the resolution of the Milnor
conjectures (see \cite{morel-voevodskys-proof} for an overview)
implies that $I(L)^l = 0$ for $l > R$. This concludes the proof in
characteristic zero.

If $e = 2$ then the functor $\SH(k)_2 \to
\DM(k, \ZZ[1/2])$ is conservative on connective objects by Corollary
\ref{corr:hurewicz-conservativity} (see the discussion of $\SH(k)_2^-$
in the last paragraph of this section), and the theorem follows.

If $e > 1$ and assumption (b) holds (i.e. $E$ is compact), then the same
argument as in characteristic zero applies, but with $F_\bullet$ replaced by
$\tilde{F}_\bullet$ and the reference to Theorem \ref{thm:levine-slice} replaced
by Theorem \ref{thm:slice-filtration}.

If $e > 2$ and assumption (a) holds, we still need to show that for $K/k$
finitely generated we have $\ul \pi_n(E)_*(K)= 0$. Let $K^p/K$ be the perfect
closure. By Lemma \ref{lemm:inj-p} below, the natural map $\ul \pi_n(E)_*(K) \to
\ul \pi_n(E)_*(K^p)$ is injective. It is thus enough to show that the latter group is
zero. For this we use the same argument as in characteristic zero; the only
additional thing we need is to show that $K^p$ has finite 2-étale cohomological
dimension, but this follows from finite dimension of $K$ since they have the
same étale site \cite[Tags 04DZ and
01S4]{stacks-project}.
\end{proof}

\begin{lemm} \label{lemm:inj-p}
Let $k$ be a perfect field of characteristic $p > 0$
and $H_*$ a homotopy module on which $p$ is invertible. Let $K/k$ be a finitely
generated field and $K'/K$ a purely inseparable extension. Then $H_*(K) \to
H_*(K')$ is injective.
\end{lemm}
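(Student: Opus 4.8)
The plan is to reduce to a transfer argument for strictly invariant sheaves, exploiting that $K'/K$ is a colimit of finite purely inseparable subextensions, each of $p$-power degree, and that multiplication by $p$ is invertible on $H_*$. First I would recall that if $L''/L'/L$ is a tower of finite extensions of fields in $\mathcal{F}_k$, then the composite $H_*(L) \xrightarrow{\text{res}} H_*(L') \xrightarrow{tr_{L'/L}} H_*(L)$ of restriction followed by cohomological transfer equals multiplication by the ``quadratic degree'' of $L'/L$; for $L'/L$ finite \emph{purely inseparable} of degree $p^s$, this degree is $p^s \cdot \langle 1 \rangle$ (there is no contribution from a trace form since the residue field extension is trivial and everything is concentrated at a single point), so the composite is multiplication by $p^s$, which is invertible by hypothesis. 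Hence $\mathrm{res}\colon H_*(L) \to H_*(L')$ is a split injection for every finite purely inseparable extension.

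Next I would pass to the colimit. Since $K'/K$ is purely inseparable, it is the filtered colimit of its finite subextensions $K \subset K_\alpha \subset K'$, each of which is finite purely inseparable over $K$. By the definition of $H_*$ on essentially smooth $k$-schemes as an appropriate filtered colimit, we have $H_*(K') = \colim_\alpha H_*(K_\alpha)$, and the map $H_*(K) \to H_*(K')$ is the colimit of the maps $H_*(K) \to H_*(K_\alpha)$. Each of the latter is injective by the previous paragraph, and a filtered colimit of injections of abelian groups is injective, so $H_*(K) \to H_*(K')$ is injective as desired. (One must be slightly careful that $H_*(K)\to H_*(K_\alpha)$ is injective uniformly, i.e. that the splittings are compatible enough; but injectivity of each individual map already suffices, since if $x \in H_*(K)$ maps to $0$ in $H_*(K')$ then it maps to $0$ in some $H_*(K_\alpha)$, hence $x = 0$.)

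The main obstacle I anticipate is making the composite formula $tr_{L'/L} \circ \mathrm{res} = \cdot\, p^s$ precise in this setting. Morel constructs the cohomological transfers for homotopy modules only for \emph{separable} finite extensions in \cite[Chapter 4]{A1-alg-top}, via geometric presentations of the diagonal; the purely inseparable case needs a separate (easier) input. The cleanest route is probably to observe that a finite purely inseparable extension $L'/L$ of degree $p^s$ can be realized, after a suitable finite separable base change or at the level of the relevant local rings, so that the ``transfer'' one needs is the one associated to a finite flat morphism of essentially smooth $k$-schemes with trivial residue extension, for which the composite with restriction is visibly multiplication by the degree. Alternatively one can use that $H_*$, being a homotopy module, is a module over $\ul K_*^{MW}$ and that the relevant transfer is $\ul K_*^{MW}$-linear, reducing the computation to $\ul K_*^{MW}(L') \to \ul K_*^{MW}(L)$, where the purely inseparable transfer and the degree formula are classical (e.g. for Milnor K-theory the transfer along a degree-$p$ purely inseparable extension composed with restriction is multiplication by $p$). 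Either way, once the degree formula is in hand the rest is routine, so I would isolate that formula as the one technical lemma to prove carefully and treat the colimit step as formal.
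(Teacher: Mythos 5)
Your overall architecture---reduce to finite purely inseparable subextensions (in fact to simple extensions $L(x^{1/p})/L$), use the transfer plus the projection formula to make restriction a split injection after inverting $p$, then pass to the filtered colimit---is exactly the paper's. The colimit step is fine, and your worry about the existence of transfers is unnecessary: Morel's cohomological (absolute) transfers are defined for arbitrary finite extensions, not just separable ones, so no separate construction is needed. The gap is precisely in the one technical lemma you isolate. For a homotopy module the transfer is $\ul K_*^{MW}$-linear, so by the projection formula the composite $tr_{L'/L}\circ\mathrm{res}$ is multiplication by the element $t = tr_{L'/L}(1) \in GW(L)$, \emph{not} by the integer $[L':L]$. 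Your claim that ``there is no trace-form contribution'' and that the composite is literally $\cdot\,p^s$ is false in general: the paper computes, via the identification of the cohomological transfers on $\ul K_*^{MW}$ with Scharlau transfers (using Scharlau's reciprocity law, cf. Calm\`es--Fasel), that $t = \sum_{i=1}^{p}\langle(-1)^{i-1}\rangle$, which for odd $p$ is $\tfrac{p-1}{2}(\langle 1\rangle + \langle -1\rangle) + \langle 1\rangle$ and in general differs from $p\langle 1\rangle$ in $GW(L)$ (e.g.\ for $p=3$, $L=\FF_3(T)$). Neither of your proposed fixes closes this: the Milnor K-theory degree formula computes a transfer with $\ZZ$-coefficients, whereas the multiplier here lives in $GW(L)$; and for a finite flat map with trivial residue field extension the composite is ``visibly'' multiplication by the degree only for theories with $\ZZ$-linear (Cor-type) transfers, not for homotopy modules, where the local multiplicity is a quadratic form.

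What is actually needed, and what the paper proves, is that $t$ is a unit in $GW(L)[1/p]$. This is checked on the two factors: the rank of $t$ is $p$, invertible by hypothesis, and the image of $t$ in $W(L)[1/p]$ is a unit---for odd $p$ the hyperbolic summands vanish in $W(L)$ so the Witt class is $1$, and for $p=2$ one uses that $W(L)[1/2]=0$ in characteristic $2$ (Milnor). With this invertibility in hand (together with the projection formula, which the paper justifies by the transfer being induced by a map of pro-spectra), your splitting and colimit argument goes through verbatim. So the outline matches the paper, but the degree formula you rely on is wrong as stated, and the genuine content of the lemma is exactly the computation of $t$ and the verification that it is a unit after inverting $p$.
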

\begin{proof}
Let $L/k$ be a finitely generated field extension and $x \in L$ not a $p$-th
root. Write $L' = L(x^{1/p})$. I claim that $H_*(L) \to H_*(L')$ is injective.
Once this is done we conclude that $H_*(K) \to H_*(K')$ is injective for any
purely inseparable finitely generated extension (being a composite of finitely
many extensions of the form $L'/L$) and hence for any purely inseparable
extension by continuity. (See the beginning of Subsection
\ref{subsec:app-overview} for the continuous
extension of a homotopy module from a presheaf
on $Sm(k)$ to essentially smooth schemes.)

In order to prove the claim, we shall use the transfer $tr_{L'/L}: H_*(L') \to
H_*(L)$. This satisfies the projection formula: if $\alpha \in H_*(L)$ then
$tr_{L'/L} \alpha|_{L'} = tr_{L'/L}(1) \alpha$, where $1 \in \ul GW(L')$ is the
unit. (This is because transfer comes from an actual map of pro-spectra
$\Sigma^\infty Spec(L)_+ \to \Sigma^\infty Spec(L')_+$.)
Hence it is enough to show that $t := tr_{L'/L}(1)$ is a unit in
$GW(L)[1/p]$.  I claim that
\begin{equation} \label{eq:t-formula}
  t = \sum_{i=1}^p \langle (-1)^{i-1} \rangle.
\end{equation}
Indeed this may be checked by direct computation, using the fact that
(geometric) transfers on $\ul K_*^{MW}$ coincide with Scharlau transfers, as
follows from their definition \cite[Section 4.2]{A1-alg-top} and Scharlau's reciprocity law
\cite[Theorem 4.1]{scharlau1972quadratic}. We will explain this at the end of
the proof.

Next we need to show
that $t \in GW(L)[1/p]$ is invertible. For this, recall the fibre product
decomposition
\begin{equation*}
\begin{CD}
GW(L) @>{dim}>> \ZZ \\
@V{cl}VV        @VVV \\
W(L)  @>>>      \ZZ/2.
\end{CD}
\end{equation*}
Here $W(L) = GW(L)/h$ is the witt ring, $h = \langle 1, -1 \rangle$ is the
hyperbolic plane, $dim$ is the dimension homomorphism (determined by the
property that $dim(\langle a \rangle) = 1$ for all $a \in L^\times$) and $cl$ is the
canonical surjection (as is $\ZZ \to \ZZ/2$). Both injectivity and surjectivity of
the map $GW(L) \to
W(L) \times_{\ZZ/2} \ZZ$ follow from the fact that $GW(L) h = \ZZ h$
\cite[Lemma 1.16]{wittrings}.

Thus to show that $t \in GW(L)[1/p]$ is invertible,
it is enough to consider the
canonical images $dim(t) \in \ZZ[1/p]$ and $cl(t) \in W(L)[1/p]$. We know
that $dim(t) = p$ is invertible by design. If $p = 2$ then $W(L)[1/p] = 0$
\cite[Theorem III.3.6]{milnor1973symmetric}, so
$cl(t)$ is a unit. Otherwise we have that
$t = \frac{p-1}{2} (\langle 1 \rangle + \langle -1 \rangle) + 1$ and so
$cl(t) = 1$ is also invertible. This concludes
the proof, modulo the claim \eqref{eq:t-formula}.

\paragraph{Computation of $t$.}
Recall that
the transfer of a simple extension $GW(L') \to GW(L)$ is defined by considering the surjection
\[ K_1^{MW}(L(U)) \xrightarrow{\partial} \bigoplus_{P \in (\Aone_L)^{(1)}} GW(L[U]/P) =: R \to 0. \]
Now write $L' = L[U]/P_0$ for some irreducible polynomial $P_0$, pick $\alpha
\in GW(L')$, consider the
element $X \in R$ with $X_{P_0} = \alpha$ and $X_{P} = 0$ for $P \ne P_0$ and
pick a lift $X' \in K_1^{MW}(L(U))$. There exists a canonical boundary
$\partial^\infty: K_1^{MW}(L(U)) \to GW(L)$ and $tr(x) := \partial^\infty(X')$.
The fact that this is well-defined is not at all obvious and is discussed in
\cite[Section 4.2]{A1-alg-top}.

Let $f_P: L[U]/P$ be the $L$-linear map with $f_P(u^i) = 0$ for $i = 0, 1,
\dots, deg(P) -2$ and $f_P(u^{deg(P)-1}) = 1$. Here $u$ is the image of $U$.
Then given a bilinear space $B$ over
$L[U]/P$, we can view $B$ as a space over $L$ and transport the bilinear form to
have values in $L$ by applying $f_P$. The result is denoted $f_{P*} B$. This
extends by linearity to yield $f_{P*}: GW(L[U]/P) \to GW(L)$.

Let $Y = (Y_P)_P \in R$. Then the
Scharlau reciprocity theorem tells us that $\partial^\infty(Y) = \sum_P f_{P*}
\partial^P(Y)$. In particular, for $\alpha \in GW(L')$ we apply this to the lift
$X'$ to obtain $tr(x) = f_{P*}(x)$.

In our case, $x$ corresponds to the one-dimensional $L'$-vector space $V$ with basis $e$ and
bilinear form $B(ae, be) = ab$ for $a, b \in L'$. We have $P(U) = U^p - x$. Thus
as an $L$-vector space $V$ has basis $e, ue, \dots, u^{p-1} e$, where $u \in L'$
is the image of $U$. It follows that
\[ f_PB_V\left(\sum_i a_i u^i e, \sum_j b_j u^j e\right) = \sum_i a_i b_{p-1-i}. \]
Thus $f_{P*} V$ can be decomposed in the evident way into a sum
of two-dimensional bilinear spaces all of which are hyperbolic, and possibly a
single one-dimensional space with the standard form (this happens if and only if
$p$ is odd).
Hence we arrive at the
claimed formula $t = \sum_{i=1}^p \langle (-1)^{i-1} \rangle$.
\end{proof}

We note that Theorem \ref{thm:conservativity-I} can definitely fail if $k$
has infinite 2-étale
cohomological dimension. See \cite[Example 2.1.2(4)]{bondarko-effectivity} for
an example.

We also obtain the Pic-injectivity result:

\begin{thm}[Pic-injectivity] \label{thm:pic-injectivity}
Let $k$ be a perfect field of finite 2-étale
cohomological dimension and $E \in \SH(k)$ be invertible. If $ME \wequi \ZZ$
then $E \wequi S$.
\end{thm}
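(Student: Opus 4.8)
The plan is to leverage the conservativity Theorem \ref{thm:conservativity-I} together with the tensor structure on $M$, exactly as in the Proposition in the introduction but now extracting more from the hypothesis $ME \simeq \ZZ$. First I would observe that since $E$ is invertible it is in particular compact (invertible objects are always dualizable, hence compact in a compactly generated tensor category), so Theorem \ref{thm:conservativity-I} applies to $E$ and to any cofiber of a map out of $E$. Let $F$ be the inverse of $E$, so $E \wedge F \simeq S$, and note $M$ being symmetric monoidal gives $ME \wedge MF \simeq MS \simeq \ZZ$ (the unit of $\DM(k)$); this is already compatible with the hypothesis.

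The heart of the argument is to produce an explicit map $S \to E$ in $\SH(k)$ whose image under $M$ is an isomorphism $\ZZ \to ME$, and then invoke conservativity. Since $ME \simeq \ZZ$, we have $\ul h_0(ME)_* = \ZZ$ (the unit homotopy module of $\DM(k)^\heart$, i.e.\ the constant sheaf $\ZZ$) and $\ul h_i(ME)_* = 0$ for $i \neq 0$. By the Hurewicz Theorem \ref{thm:hurewicz} — applied after possibly shifting $E$ so that it is $0$-connective, using that an invertible spectrum over a field is connective up to a shift — we get $\ul h_0(ME)_* = \ul\pi_0(E)_*/\eta$, so $\ul\pi_0(E)_*/\eta \cong \ZZ$. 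Now the unit $\ZZ \cong \ul\pi_0(S)_*/\eta \to \ul\pi_0(E)_*/\eta$ induced by picking any generator — concretely, a map $S \to E$ hitting $1 \in \ul\pi_0(E)_0(k)$ under the identification, which exists because $\ul\pi_0(E)_0(k) \to (\ul\pi_0(E)_*/\eta)_0(k) = \ZZ$ is surjective — becomes an isomorphism after applying $M$ on $\ul h_0$. Since both $S$ and (a shift of) $E$ are $0$-connective with vanishing higher $ME$-homotopy except in degree $0$, the map $M(S \to E)$ is a map between objects of $\DM(k)$ concentrated in homological degree $0$ with the same $\ul h_0$; one checks it is actually an isomorphism there (not merely on $\ul h_0$ but as a map of spectra) because $\DM(k)_{\ge 0} \cap \DM(k)_{\le 0}$ is the heart and the map is an isomorphism in the heart. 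Then conservativity (the ``more specifically'' form: the cofiber $C$ of $S \to E$ has $MC \simeq 0$, hence $C \simeq 0$) yields $S \simeq E$.

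The main obstacle I expect is the bookkeeping around connectivity: an invertible object of $\SH(k)$ need not be $0$-connective (e.g.\ $\Gm[1]$-type twists), so one must first argue that $E$, possibly after tensoring with an invertible object coming from $\Gm$-shifts, can be taken $0$-connective, and then track that the corresponding normalization was already forced by $ME \simeq \ZZ$ (which pins down the bidegree). Concretely, invertible objects in $\SH(k)$ that map to $\ZZ$ in $\DM(k)$ must have the ``right'' weight, and the slice/connectivity estimates on Picard-graded pieces (cf.\ the classical computation that $Pic$ of a well-behaved stable category is small) force $E$ into $\SH(k)_{\ge 0} \cap \SH(k)_{\le 0}$ after the reduction; making this precise, perhaps by testing against $\Gm$-powers and using that $M(\Gm^{\wedge n})$ detects the twist, is where the real care is needed. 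Once $E$ is known to lie in the heart with $\ul\pi_0(E)_*/\eta \cong \ZZ$, the rest is the conservativity-plus-Hurewicz formal argument sketched above.
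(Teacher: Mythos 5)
Your overall strategy is the paper's: invertible implies compact, Hurewicz to compute $\ul h_0(ME)$, lift a map $S \to E$ through the surjection $\ul\pi_0(E)_0(k) \to (\ul\pi_0(E)_*/\eta)_0(k)$, and finish by conservativity applied to the cofiber. The gap is precisely the step you yourself flag as ``the main obstacle'': you never establish that $E$ is $0$-connective. Your proposed remedy (shift or twist $E$ by $\Gm$-powers and control the normalization by ``slice/connectivity estimates on Picard-graded pieces'') is not an argument, and the assertion that an invertible spectrum is connective up to a shift is unjustified — what is true in general is only that compact objects are bounded below, i.e.\ lie in some $\SH(k)_{\ge n}$. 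Without $E \in \SH(k)_{\ge 0}$ you cannot invoke Theorem \ref{thm:hurewicz} in the form $\ul h_0(ME)_* = \ul\pi_0(E)_*/\eta$, nor identify $\Hom(\ZZ, ME)$ with $\ul h_0(ME)_0(k)$, so the lifting step on which everything rests has no foundation as written.

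The fix is immediate and makes all the twisting bookkeeping unnecessary: this is exactly what the ``more specifically'' clause of Theorem \ref{thm:conservativity-I} is for. Since $E$ is compact (hypothesis (b)) and $ME \simeq \ZZ \in \DM(k)_{\ge 0}$, that clause gives $E \in \SH(k)_{\ge 0}$ outright; the hypothesis $ME \simeq \ZZ$ already pins down the normalization through this statement, with no need to test against $\Gm$-powers. (Note that at the end of your sketch you cite the ``more specifically'' form only to conclude $C \simeq 0$ from $MC \simeq 0$, which is the basic form of the theorem — its real use here is the connectivity statement you are missing.) With $E \in \SH(k)_{\ge 0}$ in hand, your argument closes up as in the paper: $\Hom(\ZZ, ME) = \ul h_0(ME)_0(k) = \ul\pi_0(E)_0(k)/\eta\,\ul\pi_0(E)_1(k)$, so the given isomorphism $a \colon \ZZ \to ME$ lifts to some $\tilde a \in \ul\pi_0(E)_0(k) = \Hom(S,E)$ with $M\tilde a = a$, and conservativity applied to the (compact) cofiber of $\tilde a$ shows $\tilde a$ is an isomorphism. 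It is also cleaner to lift the given isomorphism $a$ itself rather than ``any generator'', which spares you the extra step of checking that a map between objects of the heart inducing an isomorphism on $\ul h_0$ is an isomorphism.
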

(Here $\ZZ, S$ denote the respective monoidal units of $\DM(k), \SH(k)$).
\begin{proof}
Invertible objects are compact because the unit is,
so we can use the conservativity
theorem. Since $\ZZ \in \DM(k)_{\ge 0}$ it follows from the ``more specifically'' part
that $E \in \SH(k)_{\ge 0}$. Let $a \in \Hom(\ZZ, ME)$ be an
isomorphism. We have $\Hom(\ZZ, ME) = \ul h_0(ME)_0(k) = (\ul
\pi_0(E)/\eta)_0(k) = \ul \pi_0(E)_0(k) / \eta \ul \pi_0(E)_1(k)$. It follows
that there exists $\tilde{a} \in \ul \pi_0(E)_0(k) = \Hom(S, E)$ such that
$M\tilde{a} = a$. Conservativity implies that $\tilde{a}$ is an isomorphism.
This concludes the proof.
\end{proof}

The remainder of this paper deals with the situation in which the 2-étale
cohomological dimension cannot
be finite, namely the orderable fields. We will not worry about
imperfectness problems, since we are really only interested in characteristic
zero. We begin with the following.

\begin{lemm}[Bondarko \cite{bondarko-effectivity}] \label{lemm:bondarko}
Let $k$ be of characteristic zero and finite virtual $2$-étale cohomological
dimension.

Let $E \in \SH(k)$ be connective and slice-connective, and suppose that $2^m
\id_E = 0$, for some $m \in \NN$. Then if $ME \in \DM(k)_{\ge n}$ also $E \in \SH(k)_{\ge n}$. In
particular if $ME = 0$ then $E=0$.
\end{lemm}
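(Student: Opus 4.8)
The plan is to mimic the structure of the proof of Theorem \ref{thm:conservativity-I}, replacing the use of finite $2$-étale cohomological dimension by finite \emph{virtual} $2$-étale cohomological dimension together with the hypothesis $2^n\id_E = 0$. As in the proof of Theorem \ref{thm:conservativity-I}, it suffices to show the ``more specifically'' part: if $X \in \SH(k)_{\ge m}$ is connective, slice-connective, $2^n \id_X = 0$, and $\ul h_m(MX)_* = 0$, then $X \in \SH(k)_{\ge m+1}$; iterating and using non-degeneracy of the $t$-structure gives $X \simeq 0$ when $MX \simeq 0$. By unramifiedness of strictly invariant sheaves it is enough to prove $\ul\pi_m(X)_*(K) = 0$ for every finitely generated $K/k$, and by the Motivic Hurewicz Theorem \ref{thm:hurewicz} we have $\ul\pi_m(X)_*/\eta = 0$, so $\eta$ acts surjectively on $\ul\pi_m(X)_*(K)$; hence as before it suffices to show $\ul\pi_m(X)_{M+r}(K) = 0$ for $r \gg 0$, where $F_M \ul\pi_m(X)_* = \ul\pi_m(X)_*$ by slice-connectivity.

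Running the same computation as in Theorem \ref{thm:conservativity-I}, using Theorem \ref{thm:levine-slice} and surjectivity of $\eta$, we reduce to showing that for $r \gg 0$ and all finitely generated $L/K$ we have $I(L)^r = 0$ in $W(L)$, where now additionally $2^n$ annihilates everything in sight. The point is that $K$ (hence $L$) has finite virtual $2$-étale cohomological dimension, say bounded by some $R$, meaning $L(\sqrt{-1})$ has $2$-cohomological dimension $\le R$. Since $\SH(k)^{\heart}$ here carries the action of $2^n = 0$, the relevant Milnor--Witt/Witt data is $2$-power torsion, and one invokes the resolution of the Milnor conjectures in the form: for a field $L$ with $cd_2(L(\sqrt{-1})) \le R$, the fundamental ideal satisfies $I^l(L)$ is $2$-torsion-free for $l$ large, hence $I^l(L) = 0$ once we also know $2^n$ kills it. More precisely, the Arason--Pfister / Milnor-conjecture package gives that for $l > R$ the group $I^l(L)$ injects into a product of copies of $I^l$ of the real closures of $L$ (the ``signature'' side), while the torsion part $I^l_{tors}(L)$ vanishes for $l$ large once $cd_2(L(\sqrt{-1}))$ is finite; combining with $2^n I^l(L) = 0$ forces $I^l(L) = 0$ for $l \gg 0$. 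This is exactly Bondarko's observation, and it is the step I expect to be the main obstacle — not because it is deep, but because one must carefully cite the correct form of the Milnor-conjecture consequences for the behaviour of $I^l$ over fields of finite virtual cohomological dimension, and make sure the torsion hypothesis $2^n \id_E = 0$ is genuinely used to discard the (potentially infinite) contribution coming from the orderings of $L$.

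Finally, the ``in particular'' statement follows formally: if $ME \simeq 0$ then $ME \in \DM(k)_{\ge m}$ for every $m$, so by the above $E \in \SH(k)_{\ge m}$ for every $m$; since $E$ is connective and the homotopy $t$-structure on $\SH(k)$ is non-degenerate (weak equivalences being detected by $\ul\pi_*(\bullet)_*$, as in \cite[Proposition 5.1.14]{morel-trieste}), this yields $E \simeq 0$. I would remark that the role of slice-connectivity is, as in Theorem \ref{thm:conservativity-I}, only to guarantee that the slice filtration $F_\bullet \ul\pi_m(E)_*$ terminates, so that the reduction to a single graded piece $\ul\pi_m(X)_{M+r}$ is legitimate; no new input beyond the characteristic-zero case of Theorem \ref{thm:conservativity-I} is needed there.
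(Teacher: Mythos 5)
Your overall skeleton (reduce, as in Theorem \ref{thm:conservativity-I}, via the Hurewicz Theorem and the slice filtration, to a statement about $I(L)^r$ acting on $\ul\pi_0(E)_{N+r}(L)$ for extensions $L/K$) matches the paper's, but the key algebraic step is wrong. You reduce to showing that $I(L)^r = 0$ for $r \gg 0$; this cannot work in the present setting, because the fields in question may be orderable (that is the whole point of assuming only \emph{virtual} finiteness of $cd_2$), and for an orderable field $L$ one has $I^r(L) \neq 0$ for \emph{every} $r$: the $r$-fold Pfister form $\langle\langle -1,\dots,-1\rangle\rangle$ has nonzero signature at any ordering of $L$, hence gives a nonzero class in $I^r(L)$. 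Your attempted rescue, ``combining with $2^n I^l(L) = 0$ forces $I^l(L) = 0$'', rests on an unjustified premise: the hypothesis $2^n\id_E = 0$ makes the homotopy sheaves $\ul\pi_*(E)_*$ into $2^n$-torsion objects, but it says nothing about the Witt ring of $L$, so $2^n I^l(L) = 0$ is not available (and is false for orderable $L$).

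The correct use of the torsion hypothesis is to apply it to the module rather than to the fundamental ideal. One only needs $I(L)^r\,\ul\pi_0(E)_{N+r}(L) = 0$, and this follows from the Elman--Lam consequence of finite virtual cohomological dimension cited in the paper: $vcd_2(L) \le vcd_2(K) < \infty$, and for $r > vcd_2(K) + n$ one has $I(L)^r = 2^n I(L)^{r-n}$. Hence every element of $I(L)^r$ is $2^n$ times an element of $I(L)^{r-n}$, so it acts by zero on the $2^n$-torsion group $\ul\pi_0(E)_{N+r}(L)$, even though $I(L)^r$ itself does not vanish. Your remark that the torsion part of $I^l(L)$ vanishes for $l$ large is true but insufficient on its own: torsion-freeness of $I^l(L)$ as an abelian group says nothing about how it acts on a torsion module; what is needed is precisely the divisibility statement $I^r = 2^n I^{r-n}$. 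With that substitution the rest of your argument (the reduction itself, and the ``in particular'' statement via non-degeneracy of the homotopy $t$-structure) goes through as in the paper.
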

\begin{proof}
We may assume that $E \in \SH(k)_{\ge 0}$ and prove that
$\ul\pi_0(E)_* = 0$. By induction it will follow that $\ul\pi_i(E)_* = 0$ for
all $i$, and so $E = 0$ because the homotopy $t$-structure is non-degenerate.

Suppose that $E \in \SH(k)^{eff}(-N)$ (this is true for some $N$ because $E$ is
assumed slice-connective.)
By the same arguments as in the proof of Theorem \ref{thm:conservativity-I} it
suffices to show that for any finitely generated extension $K/k$ there exists $r
>> 0$ such that for any finite extension $L/K$ we have that $I(L)^r \ul
\pi_0(E)_{N+r}(L) = 0$.

Write $vcd_2$ for the
virtual $2$-étale cohomological dimension. Then $vcd_2(K) < \infty$ and
$vcd_2(L) \le vcd_2(K)$ \cite[Theorem 28 of Chapter 4]{shatz1972profinite}.
If $r > vcd_2(K) + m$ then
$I(L)^{r} = I(L)^{r-m}2^m$ \cite{elman-lum-2cohom}. But then $I(L)^r
\ul\pi_0(E)_{N+r}(L) = 0$ because $\ul\pi_0(E)_{N+r}(L)$ is a $2^m$-torsion
group. This concludes the proof.
\end{proof}

We will have to deal with several different monoidal categories at once. In
order to keep notations straight, we
will write $\tunit = \tunit_\mathcal{C}$ for the unit in any monoidal category
$\mathcal{C}$, omitting the reference to $\mathcal{C}$ if it is clear from
context. Thus for example $\tunit_{\SH(k)} = S$.

The unit $\tunit_{\SH(k)_2}$ is the spectrum $S_2 =
\hocolim(S \xrightarrow{2} S \xrightarrow{2} S \xrightarrow{2} \dots)$ and the
localisation $\SH(k) \to \SH(k)_2 \subset \SH(k)$ is given by smash product with
$S_2$.

Recall that the category $\SH(k)_2$ naturally splits into two parts. Indeed
$End(\tunit)_{\SH(k)_2} = GW(k)[1/2] = W(k)[1/2] \oplus \ZZ[1/2]$ \cite[Theorem
6.2.1, Remark 6.1.6 (b)]{morel2004motivic-pi0} \cite[Chapter 3 and Theorem
6.40]{A1-alg-top}. More specifically one
puts $\epsilon = -\langle -1 \rangle \in End(\tunit)$. Then $\epsilon^2 = 1$ and
hence $\epsilon_\pm = (\epsilon \pm 1)/2$ are idempotents and induce the
decomposition. We write $\SH(k)_2^+$ for those $E \in \SH(k)_2$ where $\epsilon$
acts as -1, and $\SH(k)_2^-$ for those $E$ where $\epsilon$ acts as +1; then
every object $E \in \SH(k)_2$ can be written uniquely as $E = E^+ \oplus E^-$,
with $E^\pm \in \SH(k)_2^\pm$. Note that $\eta[-1] = -\epsilon -1 = -2$ on
$\SH(k)_2^-$, so $\eta$ is invertible on $\SH(k)_2^-$.

\begin{corr} \label{corr:M-SH-}
Let $k$ be of characteristic zero and finite virtual 2-étale cohomological
dimension. If $E \in \SH(k)$ is connective and slice-connective, and $ME = 0$,
then $E \in \SH(k)_2^-$.
\end{corr}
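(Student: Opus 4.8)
The plan is to prove the statement in two moves: first that $2$ acts invertibly on $E$ (so $E\in\SH(k)_2$), and then that the $\epsilon=-1$ summand of $E$ vanishes.

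For the first move I would pass to $E':=\operatorname{cofib}(E\xrightarrow{2}E)$. Since $M$ is triangulated, $ME'=\operatorname{cofib}(ME\xrightarrow{2}ME)=0$. From the defining triangle $E\xrightarrow{2}E\xrightarrow{i}E'\xrightarrow{r}\Sigma E$ one reads off that $E'$ is connective (each $\ul\pi_n(E')$ is an extension of $\ul\pi_n(E)/2$ by the $2$-torsion of $\ul\pi_{n-1}(E)$, and $E$ is connective) and slice-connective (the category $\SH(k)^{eff}(-N)$ is triangulated, hence thick, and contains $E$, so also $E'$). The one non-formal input is the classical fact, valid in any triangulated category, that $4\cdot\id_{E'}=0$: since $i\circ 2=0$ we have $(2\cdot\id_{E'})\circ i=2i=0$, so $2\cdot\id_{E'}$ factors as $\phi\circ r$ for some $\phi\colon\Sigma E\to E'$; then $4\cdot\id_{E'}=(2\phi)\circ r$, and the long exact sequence of the triangle identifies the kernel of $(-)\circ r$ on $[\Sigma E,E']$ with $2\cdot[\Sigma E,E']$, which of course contains $2\phi$. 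Thus $E'$ is connective, slice-connective, has $2^2\,\id_{E'}=0$, and $ME'=0$, so Bondarko's Lemma~\ref{lemm:bondarko} forces $E'=0$, i.e. $E\xrightarrow{2}E$ is an isomorphism and $E\in\SH(k)_2$. This is the only step that uses characteristic zero and finite virtual $2$-étale cohomological dimension.

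For the second move, write $E=E^{+}\oplus E^{-}$ with $E^{\pm}\in\SH(k)_2^{\pm}$. Since $\eta$ is invertible on $\SH(k)_2^{-}$ while $M\eta=0$, applying $M$ to the $\eta$-multiplication isomorphism on $E^{-}$ shows $ME^{-}=0$; hence $ME^{+}=ME=0$. Now $E^{+}$ is connective (a direct summand of $E$) and $\eta$ acts as $0$ on it, since on $\SH(k)_2^{+}$ one has $\eta[-1]=-\epsilon-1=0$. I would then finish by the motivic Hurewicz theorem: assuming $E^{+}\in\SH(k)_{\ge 0}$, Corollary~\ref{corr:preliminary-hurewicz} gives $0=\ul h_0(ME^{+})_*=M^{\heart}(\ul\pi_0(E^{+})_*)$, and Corollary~\ref{corr:Mheart} gives $UM^{\heart}(\ul\pi_0(E^{+})_*)=\ul\pi_0(E^{+})_*/\eta=\ul\pi_0(E^{+})_*$ (the last equality because $\eta$ acts as $0$), so $\ul\pi_0(E^{+})_*=0$. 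Hence $E^{+}\in\SH(k)_{\ge 1}$, and repeating the argument for $E^{+}[-n]$ shows $\ul\pi_n(E^{+})_*=0$ for all $n$, whence $E^{+}=0$ by non-degeneracy of the homotopy $t$-structure. Therefore $E=E^{-}\in\SH(k)_2^{-}$.

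The main obstacle is finding the right reduction in the first move — recognizing that $E/2$ is (uniformly) $2$-torsion and lands in the hypotheses of Bondarko's lemma; the identity $4\cdot\id_{E/2}=0$ is then purely triangulated, and everything in the second move is a formal consequence of the motivic Hurewicz theorem together with the structural facts about $\SH(k)_2^{\pm}$ recalled just above. One could instead phrase the second move via Corollary~\ref{corr:hurewicz-conservativity} applied to $M\colon\SH(k)_2^{+}\to\DM(k,\ZZ[1/2])$, using that $M^{\heart}$ is an equivalence on the relevant hearts.
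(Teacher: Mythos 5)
Your argument is correct and is essentially the paper's own proof: you take the cofibre of multiplication by $2$, note $4\id=0$ and feed it into Bondarko's Lemma \ref{lemm:bondarko} to get $E\in\SH(k)_2$, then kill the plus summand exactly as the paper does, via $ME^-=0$ (because $M\eta=0$ while $\eta$ is invertible on $E^-$) and conservativity of $M$ on connective objects of $\SH(k)_2^+$, your second move being just Corollary \ref{corr:hurewicz-conservativity} unwound. One small touch-up: on $\SH(k)_2^+$ the identity $\eta[-1]=-\epsilon-1=0$ by itself only kills the product $\eta[-1]$, not $\eta$; to get $\eta=0$ there, combine it with $\eta h=0$ and $h=2+\eta[-1]=2$, so $2\eta=0$ and hence $\eta=0$ since $2$ is invertible.
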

\begin{proof}
Let $F$ fit in the distinguished triangle $E \xrightarrow{2} E \to F$. We have
$4\id_F = 0$ and so the above Lemma applies: $F = 0$. We conclude that $E
\xrightarrow{2} E$ is an isomorphism, whence $E \in \SH(k)_2$. Then $ME = M(E^+)
\oplus M(E^-)$. We have $M(E^-) = 0$ (on $UME^-$ the map $\eta$ is zero,
but it
is also an isomorphism since it comes from $\eta$ on $E^-$), thus $M(E^+) = 0$. But $M:
\SH(k)_2^+ \to \DM(k, \ZZ[1/2])$ is conservative on connective objects by
Corollary \ref{corr:hurewicz-conservativity} and Theorem \ref{thm:deglise} (note
that $\eta=0$ on $\ul K_*^{MW}[1/2]^- = \ul K_*^M[1/2]$). Thus $E^+ = 0$. This
concludes the proof.
\end{proof}

Note also that if $k$ is of arbitrary characteristic but
has finite 2-étale cohomological dimension then $k$ is
non-orderable, so $W(k)[1/2] = 0$ \cite[Theorem III.3.6]{milnor1973symmetric}
and $\SH(k)_2^- = 0$. Thus $\SH(k)_2 \to \DM(k, \ZZ[1/2])$ is conservative on
connective objects in this case. This finishes an argument in the proof of
Theorem \ref{thm:conservativity-II}.

\section{Witt Motives, Real Motives and the Real Étale Topology}
\label{sec:witt-motives}

We will now use the category of Witt motives \cite[Section 4]{levine2015witt}.
Recall that for any scheme $X$, one may define the Witt ring $W(X)$
\cite{knebusch-bilinear}. This is the ring of isometry classes of symmetric
bilinear forms on vector bundles on $X$, modulo hyperbolic bundles. The
associated sheaf $\ul{W}$ restricted to smooth varieties over a field is
unramified \cite[Theorem A]{panin-witt-purity}. This is the sheaf of unramified
Witt groups. One may check that it coincides with the sheaf defined by Morel
\cite[Example 3.34, Lemma 3.10]{A1-alg-top}.

To construct the category of Witt motives,
start with the category of complexes of
sheaves of $\ul W[1/2]$-modules. This category is abelian, so we may form its derived category
$\mathcal{D}$, which is even closed symmetric monoidal. Consider the
subcategory $\mathcal{E}$ of those objects $E$ such that the natural maps
$E \to \iHom(\Aone, E)$ (coming from pull-back) and $E
\to \iHom(\ZZ\Gm, E)$ (coming from the structure as a $\ul W$-module and
the identification $\ul W = \iHom(\ZZ\Gm, \ul W)$) are quasi-isomorphisms. This
is the category $\DM_W(k, \ZZ[1/2])$ of (2-inverted) Witt motives. We summarize
its basic properties.

\begin{lemm} \label{lemm:DMW-basic}
Let $k$ be a field of characteristic zero. The category $\DM_W(k, \ZZ[1/2])$ is
tensor triangulated and compact-rigidly generated. There is an adjunction
$F: \SH(k)_2^- \leftrightarrows \DM_W(k, \ZZ[1/2]): U$ with $F$ monoidal.
\end{lemm}
\begin{proof}
We can describe $\DM_W(k, \ZZ[1/2])$ as the homotopy category
a certain Bousfield localization of the
a model category of chain complexes of presheaves of $\ul{W}[1/2]$-modules. Let us
write $C(\ul{W}[1/2], k)$ for this model category of presheaves of $\ul{W}$-modules
and $LC(\ul{W}[1/2], k)$ for the Bousfield
localization such that $Ho(LC(\ul{W}, k)) = \DM_W(k,
\ZZ[1/2])$.

When using appropriate model structures on $C(\ul{W}[1/2], k)$ and
$\mathcal{SH}(k)$, there is a Quillen adjunction $\mathcal{SH}^{S^1}(k)
\leftrightarrows LC(\ul{W}[1/2], k)$. Passing to $\Gm$-spectra, we obtain a
Quillen adjunction $\mathcal{SH}(k) \leftrightarrows Spt(LC(\ul{W}[1/2], k), \Gm)$.
But in $\DM_W(k, \ZZ[1/2])$ the object corresponding to $\Gm$ is
$\otimes$-invertible by design, so $Ho(Spt(LC(\ul{W}[1/2], k), \Gm)) \wequi \DM_W(k,
\ZZ[1/2])$ \cite[Theorem 5.1]{hovey2001spectra}. We thus have an
adjunction $\SH(k) \leftrightarrows \DM_W(k, \ZZ[1/2]): U$. It is clear that
$U(\DM_W(k, \ZZ[1/2])) \subset \SH(k)_2^-$, so we have an adjunction as
displayed. It follows (since $k$ has characteristic zero) that $\DM_W(k,
\ZZ[1/2])$ is rigidly generated.

To prove that $\DM_W(k, \ZZ[1/2])$ is compactly generated note first that
$Ho(C(\ul{W}[1/2], k))$ is compactly generated (since we deal here with
\emph{presheaves} \cite[Example 1.5 and Theorem 5.4]{cisinski2009local}). I
claim that $LC(\ul{W}[1/2], k)$ can be obtained by inverting three kinds of
maps: $X \times \Aone \to X$, $X \to X \times \Gm$ (for $X \in Sm(k)$) and the
elementary Nisnevich squares. Here $X \to X \times \Gm$ comes from the point
-1 of $\Gm$. Once we know that it will follow that $\DM_W(k, \ZZ[1/2])$ is still
compactly generated. This is because we still have a ``bounded descent structure''
in the sense of \cite[Theorem 5.4]{cisinski2009local}.

The only part of the claim that needs proof is that inverting $X \to X \times
\Gm$ has the same effect as passing to complexes such that $\epsilon_E: E^\bullet
\to \iHom(\ZZ\Gm, E^\bullet)$. In order to understand this we may pass to
$Spt(C(\ul{W}[1/2], k))$ first, i.e. make $\Gm$ $\otimes$-invertible (indeed
both kinds of localizations make $\Gm$ $\otimes$-invertible). Then one sees that
inverting $X \to X \times \Gm$ corresponds to inverting the element $[-1] \in
[S^0, \Gm]$ whereas passing to complexes $E$ on which $\epsilon_E$ is an
equivalence means inverting $\eta \in [S^0, \Gm]$. However in $\SH(k)_2^-$ we
have the equality $\eta [-1] = -2$, whereas on $\SH(k)_2^+$ we have $2[-1]^2 =
0$ and so $[-1]$ is nilpotent (see \cite[Section 3.1]{A1-alg-top} for these
kinds of calculations). Thus inverting either element has the same
effect.
\end{proof}

We shall make good use of the next result, even though the proof is very easy. 
Recall that an object $E$ in a symmetric monoidal category $\mathcal{C}$ is
called \emph{rigid} if there exists an object $DE$ such that the functors
$\otimes E, \otimes DE$ are both left and right adjoint to one another. Rigid objects
are preserved by monoidal functors since they are detected by the zig-zag
equations \cite[Theorem 2.6]{MayBurnsideRing}. If $\mathcal{C}$ is a tensor
triangulated category then the subcategory $\mathcal{C}^{rig}$ of rigid objects
is a thick tensor triangulated subcategory \cite[Theorem A.2.5]{hovey1997axiomatic}.

\begin{lemm} \label{lemm:rigidity-tunit-trick}
Let $F: \mathcal{C} \leftrightarrows \mathcal{D}: G$ be an adjunction, where
$\mathcal{C}, \mathcal{D}$ are
symmetric monoidal triangulated categories.
Assume that $F$ is a symmetric monoidal, triangulated functor and that the
composite $GF$ preserves coproducts, that $\otimes_\mathcal{C},
\otimes_\mathcal{D}$ commute with coproducts (in both variables separately)
and that $\mathcal{C}$ is rigid-compactly generated.

Then for any $E \in \mathcal{C}$ there is a natural isomorphism $GFE \to E \otimes
G \tunit_\mathcal{D}$.
In particular if $\tunit_\mathcal{C} \to G\tunit_\mathcal{D}$ is an isomorphism,
then $F$ is fully faithful.
\end{lemm}
We note that for example if $\mathcal{D}$ is also
compact-rigidly generated, then $F$ preserves compact objects and $G$ preserves
coproducts, so $GF$ preserves coproducts. Moreover the tensor product commutes
with coproducts as seen as the monoidal structure is closed.

\begin{proof}
Let $E \in \mathcal{C}$. There is a natural map $GFE \to E \otimes G\tunit$
defined as follows. By adjunction and monoidality of $F$,
we only need $FE \to FE \otimes FG\tunit$. Now $FE \wequi FE \otimes \tunit$, so
it suffices to find $\tunit \to FG \tunit$. For this we use that $F\tunit \wequi
\tunit$, so we are trying to find $F\tunit \to FGF \tunit$, for which it
suffices by functoriality to find $\tunit \to GF \tunit$. This is clear by
adjunction.

Let $E \in \mathcal{C}$ be rigid. I claim that $GFE \to E \otimes
G\tunit$ is an isomorphism. To do this, let $T \in \mathcal{C}$. We compute
\[ [T, E \otimes G\tunit] \iso [T \otimes DE, G\tunit]
   \iso [FT \otimes DFE, \tunit] \iso [FT, FE] \iso [T, GFE]. \]
Here we have used that $F$ is monoidal and so preserves duals. The claim
follows by the Yoneda lemma.

Now let $A$ be the class of objects in $E \in \mathcal{C}$ such that $GFE \to E
\otimes G\tunit$ is an isomorphism. It is closed under isomorphisms and cones
(since $G, F$ are triangulated functors) and also under arbitrary sums (since
$GF$ preserves coproducts). Finally it contains all rigid objects, by the claim.
Consequently $A$ contains all objects of $\mathcal{C}$, by compact-rigid
generation.

For the last part, if $\tunit \to G\tunit$ is an isomorphism then it follows
that $GF \wequi \id$, and $F$ is fully faithful.
\end{proof}

This allows us to prove a somewhat deeper property of $\DM_W(k, \ZZ[1/2])$.

\begin{prop} \label{prop:DMW-t-tsructure}
The category $\DM_W(k, \ZZ[1/2])$ admits a $t$-structure such that in the
adjunction $F: \SH(k)_2^- \leftrightarrows \DM_W(k, \ZZ[1/2]): U$ the functor
$U$ is $t$-exact and induces an equivalence on the hearts.

In particular $F$ is conservative on connective objects.
\end{prop}
\begin{proof}
We will use again the notation of the proof of Lemma \ref{lemm:DMW-basic}.

We wish to use the obvious analogue of the homotopy $t$-structure. We shall use
Lemma \ref{lemm:transfer-$t$-structure}. In order to
do this we need to know that if $X \in Sm(k)$, then the fibrant replacement of
$\ul{W}[1/2] \otimes X \in C(\ul{W}[1/2], k)$ is non-negative in the standard
$t$-structure. The proof of Lemma \ref{lemm:DMW-basic} shows that it is enough
to know that $FUX \in \SH(k)_{\ge 0}$. But by Lemma
\ref{lemm:rigidity-tunit-trick} for this it is enough to know that $U\tunit \in
\SH(k)_{\ge 0}$.

The unit in $C(\ul{W}[1/2], k)$ is just the sheaf $\ul{W}[1/2]$. This is already
strictly homotopy invariant and $\epsilon$-local, so globally
equivalent to its own fibrant replacement! In particular it is non-negative.

This finishes the proof that $\DM_W(k, \ZZ[1/2])$ admits a homotopy
$t$-structure. By design $U$ is $t$-exact. If $F_* \in \SH(k)_2^{-, \heart}$
then $F$ consists of strictly invariant sheaves which are $\epsilon$-local, and
consequently $F$ defines an object $e(F) \in \DM_W(k, \ZZ[1/2])^\heart$. (All of
the sheaves $F_i$ are canonically isomorphic, so this is well defined.) The
functors $e$ and $U: \DM_W(k, \ZZ[1/2])^\heart \to \SH(k)_2^{-, \heart}$ are
manifestly inverse to each other.

The last sentence just follows from Corollary
\ref{corr:hurewicz-conservativity} and Lemma \ref{lemm:t-exactness-adjoints}.
\end{proof}

We now have to use the \emph{real étale topology} \cite[(1.2)]{real-and-etale-cohomology}.
To any scheme $X$ one may
functorially associate a topological space (in fact locally ringed
space) $R(X)$, called its ``realification''. If $X =
Spec(A)$ then one writes $Sper(A) := R(Spec(A))$. The points of $R(X)$ are
pairs $(P, \alpha)$ where $P \in X$ and $\alpha$ is an ordering of the residue
field $k(P)$. We call a family of étale maps $\{\alpha_i: X_i \to X\}$ a \emph{real étale
covering (or rét-covering)} if the induced maps $R(\alpha_i): R(X_i) \to R(X)$
are jointly surjective. For example, if $X = Spec(k)$ where $k$ is a field, then
$R(X)$ is the set of orderings of $k$, and a family of separable field extension
$l_i/k$ defines a rét-covering $\{Spec(l_i) \to Spec(k)\}$ if and only if every
ordering of $k$ extends to one of the $l_i$.

The rét-coverings define a topology on schemes
\cite[(1.2)]{real-and-etale-cohomology}. We write $a_{\ret}$ for the associated
sheaf functor, in various situations. Write $W$ for the presheaf of Witt
groups.

\begin{lemm}[\cite{karoubi2015witt}, Lemma 6.4 (2) and its proof]
The sheaf $a_{Nis} W[1/2]$ is a
sheaf in the rét-topology on $Sch/\ZZ[1/2]$: for a scheme $X$ such that $2$ is
invertible on $X$, the natural map
\[ (a_{Nis} W[1/2])(X) \to (a_{\ret} W[1/2])(X) \]
is an isomorphism. Moreover so is 
\[ (a_\ret \ZZ[1/2])(X) \to (a_{\ret} W[1/2])(X). \]
\end{lemm}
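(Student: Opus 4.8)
The plan is to reduce the statement to a local question about henselian local rings in the étale topology, where the comparison between Nisnevich-sheafified and real-étale-sheafified Witt groups becomes a concrete statement about the Witt ring of such a ring. Write $\mathcal{W} = a_{Nis}\tilde{W}[1/2]$. The key point is that a morphism of Nisnevich sheaves is an isomorphism in the rét-topology precisely when it induces an isomorphism on stalks at all rét-points, and the rét-points of $Sch/\ZZ[1/2]$ are (spectra of) real closed fields, or more precisely the local rings one obtains from the real spectrum. So the first step is to recall the description of the points of the rét-topos from \cite{real-and-etale-cohomology}: the stalk of the rét-sheafification of a Nisnevich sheaf $G$ at a real closed point is a filtered colimit of sections $G(U)$ over étale neighbourhoods whose realification hits that point, i.e. over local rings that are ``real strictly henselian''.

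First I would therefore show: for $R$ a strictly henselian (for the rét-topology) local ring with $2 \in R^\times$, the canonical map $\mathcal{W}(R) \to W(R)$ (and both to $(a_{\ret}\tilde W[1/2])(R)$) is an isomorphism, and in fact $W(R)[1/2] \cong \ZZ[1/2]$ if $R$ is ``real'' and $W(R)[1/2] = 0$ otherwise. Concretely, such an $R$ is a henselian local ring (hence its Witt ring is detected by the residue field, by Witt-ring rigidity for henselian local rings with $2$ invertible — this is standard, e.g. via the fact that squares lift and $W$ of a henselian local ring equals $W$ of the residue field when $2$ is a unit) and the residue field is real closed or has no ordering; for a real closed field $W = \ZZ$ via the signature, and for a field with no ordering $W[1/2] = 0$ by \cite[Theorem III.3.6]{milnor1973symmetric}, which is already invoked in the excerpt. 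So the map to the rét-stalk, which by definition computes $\ZZ[1/2]$ on the real component of $Sper(R)$ and $0$ elsewhere, matches.

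Having the stalkwise statement, the second step is to assemble it: a map of Nisnevich sheaves on $Sch/\ZZ[1/2]$ that is a stalkwise isomorphism at every rét-point is a rét-local isomorphism, so $\mathcal{W} \to a_{\ret}\tilde W[1/2]$ becomes an isomorphism after rét-sheafification; but $\mathcal{W}$ is \emph{already} a rét-sheaf by what we just checked (its restriction to the small rét-site of any $X$ has the sheaf property because its values agree with those of the rét-sheaf $a_{\ret}\tilde W[1/2]$ on a basis, namely the rét-local rings), so in fact $\mathcal{W}(X) \to (a_{\ret}\tilde W[1/2])(X)$ is an isomorphism for all $X$ with $2$ invertible. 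One subtlety: one must check the sheafification $a_{\ret}$ is computed by a single application of the plus-construction or at least that evaluating on a scheme $X$ only sees finitely many ``orderings worth'' of data; here one uses that $Sper(X)$ is compact and that the rét-topology is quasi-compact, as in \cite{real-and-etale-cohomology}, so no set-theoretic issues arise and the comparison of global sections is legitimate.

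The main obstacle I expect is the strictly-henselian-local computation, specifically pinning down exactly which henselian local rings arise as rét-stalks and verifying $W(R)[1/2] \cong \ZZ[1/2]$ (resp. $0$) for them — this requires knowing that the real-étale-local rings are henselian with real closed (resp. nonorderable) residue field, and that passing from the residue field to $R$ does not change $W[1/2]$, which is the rigidity input. Everything after that is formal topos theory (stalkwise isos are local isos, a presheaf agreeing with a sheaf on a basis of points is a sheaf). If one prefers to avoid discussing the points of the rét-topos directly, an alternative is to cite \cite{karoubi2015witt} more heavily — indeed this lemma is essentially quoted from there — and only sketch the reduction, but the honest content is the henselian-local Witt-ring computation together with $W[1/2] = 0$ for nonorderable fields and $W = \ZZ$ for real closed fields.
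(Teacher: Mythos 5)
The lemma in the paper is quoted verbatim from \cite{karoubi2015witt} (Lemma 6.4(2)) with no proof given, so your sketch stands on its own merits. Your first step is fine as far as it goes: the rét-points have strictly real henselian local rings as stalk rings, these are henselian with real closed residue field, and $W(R)[1/2]\cong W(\text{residue field})[1/2]\cong\ZZ[1/2]$ by henselian rigidity with $2$ invertible. The gap is in the second step. Knowing that $a_{Nis}\tilde W[1/2]\to a_{\ret}\tilde W[1/2]$ is an isomorphism on all rét-stalks only says that the two sides agree \emph{after} applying $a_{\ret}$ -- which is a tautology, since $a_{\ret}a_{Nis}=a_{\ret}$. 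It does not give the rét-sheaf condition for $a_{Nis}\tilde W[1/2]$, and the inference you lean on (``a presheaf agreeing with a sheaf on a basis of points is a sheaf'') is not valid: the rét-local rings are pro-objects computing stalks, not a basis of the site, and stalkwise agreement never forces the sheaf property on actual schemes. The whole content of the lemma is exactly the statement about sections over an arbitrary $X$, which your argument never touches.

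A concrete sanity check that something essential is missing: run your argument without inverting $2$. The rét-points are unchanged, the stalks of both $a_{Nis}\tilde W$ and $a_{\ret}\tilde W$ at them are still $W$ of a real closed field $=\ZZ$, so the stalkwise comparison ``succeeds'' -- yet the conclusion is false: for a nonorderable field $k$ (e.g.\ $k=\QQ_2$) the real spectrum is empty, so the empty family is a rét-cover of $Spec(k)$ and any rét-sheaf must vanish there, while $W(k)\neq 0$. The inversion of $2$ has to enter where your sketch has no access, namely in verifying descent on honest coverings: one needs that the torsion of $W(k)$ is $2$-primary and the Pfister--Arason local--global principle, giving $W(k)[1/2]\cong C(Sper(k),\ZZ[1/2])$, and then a Gersten/unramifiedness-type argument to pass from fields to general $X$ -- this is what Jacobson's theorem and \cite{karoubi2015witt} actually supply. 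Your appeal to $W[1/2]=0$ for nonorderable residue fields occurs only at stalks, where it never applies (no rét-point has nonorderable residue field), so in your argument it does no work. Either repair the second step by proving descent on coverings using these arithmetic inputs, or simply cite \cite{karoubi2015witt} as the paper does.
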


\begin{corr} \label{corr:W-ret-sheaf}
For any field $k$ of characteristic not two, the sheaf $\ul{W}[1/2]$ on $Sm(k)$
is a sheaf in the rét-topology. Moreover the natural map $\ZZ[1/2] \to
\ul{W}[1/2]$ induces an isomorphism $a_\ret \ZZ[1/2] = \ul{W}[1/2]$.
\end{corr}
\begin{proof}
The (Nisnevich!) sheaf of unramified Witt
groups $\ul{W}$ is just the sheaf associated (in the Zariski
topology) with the presheaf $W$ of Witt groups \cite[Theorem
A]{panin-witt-purity}.
\end{proof}

We can consider a rét-version of the category $\DM_W(k, \ZZ[1/2])$. This is
constructed in precisely the same way, just that we start with rét-sheaves
instead of Nisnevich sheaves. There is an evident adjunction $L: \DM_W(k, \ZZ[1/2])
\leftrightarrows \DM_W(k, \ZZ[1/2])^\ret: R$ induced by $a_\ret$ and the
forgetful functor. Note that since $\ul{W}[1/2] = a_\ret
\ZZ[1/2]$, the category $\DM_W(k, \ZZ[1/2])^\ret$ is really just built from the
derived category of chain complexes on $Shv(Sm(k)_\ret)$, the Witt notation is
mostly a coincidence.

\begin{lemm}
The canonical functor $L: \DM_W(k, \ZZ[1/2]) \to \DM_W(k, \ZZ[1/2])^\ret$ is an
equivalence of categories.
\end{lemm}
\begin{proof}
I claim that $\DM_W(k, \ZZ[1/2])^\ret$ is compactly generated. The category
$D(Shv(Sm(k)_\ret))$ is compactly generated by \cite[Proposition
1.1.9]{cisinski2013etale}, since the rét-cohomological dimension of a scheme $X$
is bounded by the Zariski dimension of $X$ \cite[Theorem 7.6]{real-and-etale-cohomology}.
The category $\DM_W(k, \ZZ[1/2])^\ret$ is a reflective localization of
$D(Shv(Sm(k)_\ret))$. In other words there is a fully faithful functor $\iota: \DM_W(k,
\ZZ[1/2])^\ret \to D(Shv(Sm(k)_\ret))$ which has a left adjoint $L'$. The
essential image of $\iota$ consists of the \emph{local objects}, i.e. those $E
\in D(Shv(Sm(k)_\ret))$ such that $\alpha^*: [T_2, E[i]] \to [T_1, E[i]]$ is an
isomorphism for all $i \in \ZZ$ and appropriate maps
$\alpha: T_1 \to T_2 \in D(Shv(Sm(k)_\ret))$. Namely, as in the proof of Lemma
\ref{lemm:DMW-basic}, the maps $\alpha$ we consider here are induced by
$X \times \Aone \to X$ and $X \to X \times \Gm$, for $X \in Sm(k)$. In
particular, these are maps between \emph{compact objects}. Since $\iota$ is fully faithful,
it is easy to see that the image under $L'$
of a generating set for $D(Shv(Sm(k)_\ret)$ is
a generating set for $\DM_W(k, \ZZ[1/2])^\ret$. Since local objects are
characterised in terms of maps out of certain compact objects,
they are closed under arbitrary sums. This implies that $\iota$ preserves
abitrary sums, and hence $L'$ preserves compact
objects. Thus the claim is proved.

It follows that $\DM_W(k, \ZZ[1/2])^\ret$ is compact-rigidly generated (since
the monoidal functor $L: \DM_W(k, \ZZ[1/2]) \to \DM_W(k, \ZZ[1/2])^\ret$ preserves
rigid objects) and so we may apply Lemma \ref{lemm:rigidity-tunit-trick}. It is
thus enough to show that $RL \tunit = \tunit$. But as we have said before the
sheaf $\ul{W}[1/2] \in LC(\ul{W}[1/2], k)$ is already fibrant, i.e.
$H^n_{Nis}(X \times \Aone, \ul{W}[1/2]) = H^n_{Nis}(X, \ul{W}[1/2]$ and
$H^n_{Nis}(X \times \Gm, \ul{W}[1/2]) = H^n_{Nis}(X, \ul{W}[1/2]_{-1}) =
H^n_{Nis}(X, \ul{W}[1/2]$. Since Nisnevich and rét-cohomology of rét-sheaves
agree \cite[Proposition 19.2.1]{real-and-etale-cohomology} and $\ul{W}[1/2]$
already \emph{is} a rét-sheaf by Corollary \ref{corr:W-ret-sheaf} we conclude
that $\ul{W}[1/2]$ is also fibrant as an object of $L^\ret C(\ul{W}[1/2], k)$,
i.e. in the model category underlying $\DM_W(k, \ZZ[1/2])^\ret$. Consequently
$R\tunit \wequi \tunit$ and we are done.
\end{proof}

As we have noted above, the category $\DM_W(k, \ZZ[1/2])^\ret$ is obtained as a
localization of $D(Shv(Sm(k)_\ret), \ZZ[1/2])$. We might thus attempt to use
Lemma \ref{lemm:transfer-$t$-structure} to build a $t$-structure on $\DM_W(k,
\ZZ[1/2])^\ret$ such that $\DM_W(k, \ZZ[1/2])^\ret \hookrightarrow
D(Shv(Sm(k)_\ret), \ZZ[1/2])$ is $t$-exact. The heart of this $t$-structure
will, by design, be a full subcategory of $Shv(Sm(k)_\ret)$. If this is possible,
we shall say that $\DM_W(k, \ZZ[1/2])^\ret$ admits a homotopy $t$-structure.
Note that we have already constructed a $t$-structure on the category $\DM_W(k,
\ZZ[1/2])$ which we now know is equivalent to $\DM_W(k, \ZZ[1/2])^\ret$.
However, it is not clear a priori that these two $t$-structures are the same.

\begin{lemm} \label{lemm:DMW-ret-t-structure}
The category $\DM_W(k, \ZZ[1/2])^\ret$ admits a homotopy $t$-structure, and the
equivalence $\DM_W(k, \ZZ[1/2])^\ret \wequi \DM_W(k, \ZZ[1/2])$ is $t$-exact.
\end{lemm}
\begin{proof}
We have ``forgetful'' functors
\[ D(Shv(Sm(k)_\ret), \ZZ[1/2]) \xrightarrow{U_1} D(\ul W[1/2]\Mod)
\xrightarrow{U_2} D(Shv(Sm(k)_{Nis})). \]
All three categories have canonical $t$-structures, and the left adjoints are
right $t$-exact. Moreover, $U_2$ is $t$-conservative (by definition). Together
with \cite[Proposition 19.2.1]{real-and-etale-cohomology}, this implies that
$U_1$ is $t$-exact. Consider the restricted functor $U_1:  D(Shv(Sm(k)_\ret),
\ZZ[1/2]) \supset \DM_W(k, \ZZ[1/2])^\ret \to \DM_W(k, \ZZ[1/2]) \subset D(\ul
W[1/2]\Mod)$. It follows from the above that
\begin{gather*} U_1(\DM_W(k, \ZZ[1/2])^\ret \cap D(Shv(Sm(k)_\ret),
\ZZ[1/2])_{\ge 0}) \subset \\ \DM_W(k, \ZZ[1/2])_{\ge 0} = \DM_W(k, \ZZ[1/2]) \cap
D(\ul W[1/2]\Mod)_{\ge 0},
\end{gather*}
and similarly for $\le 0$. Since $U_1: \DM_W(k, \ZZ[1/2])^\ret \to \DM_W(k,
\ZZ[1/2])$ is an equivalence, it follows that $\DM_W(k, \ZZ[1/2])^\ret \cap D(Shv(Sm(k)_\ret),
\ZZ[1/2])_{\ge 0}$ is the non-negative part of a (unique) $t$-structure on
$\DM_W(k, \ZZ[1/2])^\ret$. This proves simultaneously that the homotopy
$t$-structure exists and that the equivalence is $t$-exact.
\end{proof}

This result has the following interesting consequence. Recall the functor
$\Omega$ from the beginning of Section \ref{sec:homotopy-modules}.

\begin{corr} \label{corr:Nis-ret-sheaves}
Let $k$ be a field of characteristic zero.

Let $F$ be a Nisnevich sheaf of $\ul W[1/2]$-modules on $Sm(k)$ which is strictly homotopy
invariant and such that the natural map $F \to \Omega F$ is an isomorphism. Then
for any $p \ge 0$ and $X \in Sm(k)$ we have $H^p_{Nis}(X, F) = H^p_{\ret}(X, F)$.
\end{corr}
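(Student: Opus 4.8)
The plan is to deduce this from Lemma~\ref{lemm:EMW-conservativity} by identifying the sheaf $F$ with the $\ul h_0$ of an object of $EMW_k[1/2]\Mod$. Given $F$ as in the statement, the hypotheses (strictly homotopy invariant, $\ul W[1/2]$-module, $F \to \Omega F$ an isomorphism) are precisely the conditions describing $(EMW_k[1/2]\Mod)^\heart$ recorded just before Lemma~\ref{lemm:F-conservativity}. So there is an object $E \in EMW_k[1/2]\Mod$, concentrated in degree $0$ of the $t$-structure, with $\ul \pi_0(E) = F$ (as a homotopy module all of whose terms are $F$, since $\eta$ is invertible).

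Next I would compute both sides of the desired equality as Hom-groups in the two categories $EMW_k[1/2]\Mod$ and $(EMW_k[1/2]\Mod)^{\ret}$. On the Nisnevich side, for $X \in Sm(k)$ the descent spectral sequence (as in \eqref{eq:SS1}) for the $t$-structure on $EMW_k[1/2]\Mod$ reads $H^p_{Nis}(X, \ul\pi_q E) \Rightarrow [FX[q-p], E]$, and since $E$ is concentrated in a single degree this degenerates to give $[FX[-p], E] = H^p_{Nis}(X, F)$. On the rét side, applying the equivalence $L$ of Lemma~\ref{lemm:EMW-conservativity} and the analogous spectral sequence \eqref{eq:SS2} computing maps in $(EMW_k[1/2]\Mod)^{\ret}$, one gets $[FX[-p], LE] = [FX[-p], iLE] = H^p_{\ret}(X, F)$. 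But $L$ is an equivalence, so $[FX[-p], E] = [FX[-p], LE]$, and comparing the two computations yields $H^p_{Nis}(X,F) = H^p_{\ret}(X,F)$, naturally in $X$.

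The main thing to be careful about is the convergence and degeneration of the descent spectral sequences: one needs that the Nisnevich (resp.\ rét) cohomological dimension of $X$ is finite, which holds since both are bounded by $\dim X$ (as already cited in the proof of Lemma~\ref{lemm:EMW-conservativity}), so the spectral sequences converge completely; and one needs that $E$ really is concentrated in $t$-degree $0$, which is exactly the content of identifying $F$ with an object of the heart. A secondary point is that the identification $[FX[-p], iLE] = H^p_{\ret}(X, F)$ should be justified by the explicit description of $(EMW_k[1/2]\Mod)^{\ret}$ in terms of complexes of rét-sheaves of $\ul W[1/2]$-modules, so that its $t$-structure and descent spectral sequence involve rét-cohomology; alternatively one can invoke that $i$ is $t$-exact and compare directly. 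I expect the identification of $F$ with an object of the heart, and checking that the degenerate spectral sequences genuinely give the stated cohomology groups rather than some associated graded, to be the only real (if minor) obstacle; everything else is formal given Lemma~\ref{lemm:EMW-conservativity}.
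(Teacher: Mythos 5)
Your proposal is correct and is essentially the paper's argument spelled out: the paper's one-line proof (the corollary ``just says'' that $EMW_k[1/2]\Mod$ and $(EMW_k[1/2]\Mod)^{\ret}$ have the same hearts) is exactly your identification of $F$ with an object of the heart, which is rét-local by Lemma \ref{lemm:EMW-conservativity}, plus the computation of maps out of the generators $FX[-p]$ as Nisnevich resp.\ rét cohomology. The extra care you take with the descent spectral sequences and their convergence matches what is already used in the proof of Lemma \ref{lemm:EMW-conservativity}, so there is nothing further to fix.
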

\begin{proof}
This follows from Proposition \ref{prop:DMW-t-tsructure} and Lemma
\ref{lemm:DMW-ret-t-structure}. Indeed the sheaves $F$ as in the corollary are
precisely objects in the heart of $\SH(k)_2^-$, which is the same as the
$\Aone$-invariant and $\Gm$-invariant rét-sheaves by the two results.
\end{proof}

From now on, we will identify $\DM_W(k, \ZZ[1/2])$ and $\DM_W(k,
\ZZ[1/2])^\ret$.

\begin{corr} \label{corr:W-basechange-cons}
Let $k$ have characteristic zero and $l_j/k$ be field extensions, $j \in
J$ some indexing set. Assume that every ordering of $k$ extends to an ordering
of one of the $l_j$, i.e. that $\{Spec(l_j) \to Spec(k)\}_j$ is a rét-covering.
Then the functor
\[ \DM_W(k, \ZZ[1/2]) \to \prod_{j \in J}\DM_W(l_j, \ZZ[1/2]) \]
is conservative.
\end{corr}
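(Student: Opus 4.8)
The plan is to reduce the conservativity statement to a descent statement for the category $EMW_k[1/2]\Mod$ along the rét-topology, which is exactly what Lemma \ref{lemm:EMW-conservativity} provides. First I would use Lemma \ref{lemm:EMW-conservativity} to replace $EMW_k[1/2]\Mod$ by its rét-version $(EMW_k[1/2]\Mod)^{\ret}$, and likewise for each $l_j$; since $L$ is an equivalence, conservativity of the original functor is equivalent to conservativity of
\[ (EMW_k[1/2]\Mod)^{\ret} \to \prod_{j \in J} (EMW_{l_j}[1/2]\Mod)^{\ret}. \]
The point of passing to the rét-local categories is that they are genuinely "étale-local" objects, so base change along the $l_j/k$ should behave like pulling back a sheaf along a jointly-surjective family of points of the topological space $Sper(k) = R(\operatorname{Spec} k)$, and such a family detects whether a (hyper)sheaf vanishes.

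Concretely, I would argue on the level of the hearts and then bootstrap using the $t$-structure. An object $E \in EMW_k[1/2]\Mod$ is zero iff all its homotopy objects $\ul\pi_i(E)$ vanish (the $t$-structure is non-degenerate, since $U$ to $\SH(k)$ is $t$-exact and conservative on that piece), so it suffices to show that the functor on hearts is conservative. By the explicit description recalled before Lemma \ref{lemm:F-conservativity}, $(EMW_k[1/2]\Mod)^\heart$ is the category of strictly $\Aone$-invariant Nisnevich sheaves of $\ul W[1/2]$-modules $F$ with $F \xrightarrow{\sim} \Omega F$, and by Corollary \ref{corr:Nis-ret-sheaves} such a sheaf is automatically a rét-sheaf. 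Now if $F$ restricts to zero on each $Sm(l_j)$, then in particular $F(\operatorname{Spec} l_j \times_k X) = 0$ for all $X$; using that $F$ is unramified (strictly invariant sheaves are unramified, \cite[Lemma 6.4.4]{morel2005stable}) it is enough to see $F(\operatorname{Spec} E') = 0$ for every finitely generated field extension $E'/k$. Since $F$ is a rét-sheaf and $\{\operatorname{Spec} l_j \to \operatorname{Spec} k\}$ is a rét-covering, every ordering of $E'$ restricts to an ordering of $k$ which extends to some $l_j$; one can then find a common separable extension $E''/E'$ through which a point of $Sper(l_j)$ lifts, and the rét-sheaf property forces $F(E') \hookrightarrow \prod F(E'')$-type injectivity, killing $F(E')$. (Equivalently: the stalks of $F$ at points of $Sper(k)$ are computed after base change to the $l_j$, and a sheaf with vanishing stalks on a space covered by the images of the $Sper(l_j)$ is zero.)

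Having disposed of the hearts, the general case follows by a standard dévissage: given $E$ with $E|_{l_j} = 0$ for all $j$, the functor on homotopy objects is $\ul\pi_i(E|_{l_j}) = \ul\pi_i(E)|_{l_j}$ (base change is $t$-exact here, being the composite of the symmetric monoidal localisation and extension-of-scalars functors, each of which is $t$-exact), so $\ul\pi_i(E)|_{l_j} = 0$ for all $i,j$, hence $\ul\pi_i(E) = 0$ by the heart case, hence $E = 0$ by non-degeneracy. The main obstacle I anticipate is the bookkeeping in the heart step: making precise that "a strictly invariant $\ul W[1/2]$-sheaf which is a rét-sheaf and vanishes on all $Sm(l_j)$ must vanish", i.e. correctly phrasing the statement that the rét-points coming from $Sper(l_j)$ jointly detect the vanishing of a rét-sheaf on $Sm(k)$ — this is where one must be careful that unramifiedness plus the rét-covering property genuinely suffice, rather than needing some finiteness or Noetherianity hypothesis. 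Everything else is formal given Lemma \ref{lemm:EMW-conservativity} and Corollary \ref{corr:Nis-ret-sheaves}.
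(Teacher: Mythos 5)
Your proposal is correct and follows essentially the same route as the paper: reduce to the hearts using $t$-exactness of base change and non-degeneracy of the $t$-structure, observe via Corollary \ref{corr:Nis-ret-sheaves} that the relevant heart objects (homotopy sheaves) are rét-sheaves, and conclude because $\{Spec(l_j) \to Spec(k)\}_j$ is a rét-covering. The only differences are cosmetic: your opening reduction via Lemma \ref{lemm:EMW-conservativity} is redundant (you never use it afterwards), and your field-by-field argument with unramifiedness and extension of orderings just spells out the detection step that the paper dismisses as ``essentially clear''.
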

\begin{proof}
The base change functor is $t$-exact and so commutes with taking homotopy
sheaves. By the previous corollary all of the homotopy sheaves are sheaves in
the rét-topology, so it suffices to show that
\[ Shv_{\ret}(Sm(k)) \to \prod_j Shv_{\ret}(Sm(l_j)) \]
is conservative. This is essentially clear, since $\{Spec(l_j) \to Spec(k)\}_j$
is a rét-covering.
\end{proof}

The above corollary shows that for our purposes, in studying $\DM_W(k,
\ZZ[1/2])$ we may
assume that $k$ is real closed. It turns out that in this case the field is
essentially irrelevant. Before we can prove this, we need to recall a result
from semi-algebraic topology. This combines the work of several people.

\begin{thm}[Coste-Roy, Delfs, Scheiderer] \label{thm:real-closed-comparison}
Let $K/k$ be an extension of real closed fields and $X/k$ a separated scheme of finite
type. Then for any presheaf of abelian groups $F$ (on $X$) and any $p \ge 0$, the natural map
\[ H^p_{\ret}(X, F) \to H^p_{\ret}(X_K, F_K) \]
is an isomorphism, where $F_K$ denotes the pulled back sheaf.
\end{thm}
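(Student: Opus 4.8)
The plan is to pass from sites to spaces using the fundamental theorem of Coste--Roy: for any scheme $Y$ the small real étale topos $Y_{\ret}$ is equivalent to the topos of sheaves on the real spectrum $R(Y)$, which is a spectral topological space, and under this equivalence $H^p_{\ret}(Y, G) = H^p(R(Y), G^{\ret})$, where $G^{\ret}$ is the associated sheaf; see \cite[Chapter~1]{real-and-etale-cohomology}. Since $H^*_{\ret}(-,F)$ depends only on $a_{\ret}F$ and pullback of sheaves commutes with sheafification, I may assume that $F$ is a sheaf on $X_{\ret}$. Writing $\varphi\colon R(X_K)\to R(X)$ for the continuous map induced by base change, the comparison map in question is, via the Leray spectral sequence, induced by the unit $\mathcal F\to R\varphi_*\varphi^*\mathcal F$; so it suffices to show this unit is an isomorphism in the derived category of $R(X)$ for every abelian sheaf $\mathcal F$.

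First I would reduce to $X$ affine. The Zariski topology is coarser than the rét-topology, so both $H^*_{\ret}(X,F)$ and $H^*_{\ret}(X_K,F)$ sit in Mayer--Vietoris long exact sequences for a finite Zariski open cover, compatibly with the comparison map; since $X$ is separated, finite intersections of affine opens are again affine, so an induction on the size of a finite affine cover together with the five lemma reduces us to $X = Spec(A)$ with $A$ a finitely generated $k$-algebra. Now $R(X) = Sper(A)$ is a spectral space, so the unit $\mathcal F\to R\varphi_*\varphi^*\mathcal F$ may be checked on stalks at points $\xi\in Sper(A)$.

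The stalk at $\xi$ is computed by a standard limit argument: cohomology of spectral spaces commutes with the relevant cofiltered limits, so $(R^q\varphi_*\varphi^*\mathcal F)_\xi = H^q(\varphi^{-1}(\xi), \varphi^*\mathcal F|_{\varphi^{-1}(\xi)})$, and $\varphi^*\mathcal F$ restricted to the fibre is constant with value $\mathcal F_\xi$. The fibre $\varphi^{-1}(\xi)$ is (up to natural identification) the real spectrum $Sper(L\otimes_k K)$, where $L$ is the real closure of the residue field $k(\mathfrak p_\xi)$ at the ordering $\xi$. Hence the theorem reduces to the assertion that for real closed fields $L, K$ containing $k$ the spectral space $Sper(L\otimes_k K)$ is connected and acyclic for arbitrary coefficients, i.e.\ has the cohomology of a point. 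Connectedness is classical in real algebra (both $L$ and $K$ order-embed into a common real closed extension), and the vanishing of higher cohomology with arbitrary coefficients is the sheaf-theoretic form of the Delfs--Knebusch comparison theorem in semialgebraic topology --- invariance of semialgebraic (equivalently, real étale) cohomology under extension of the real closed base field --- applied to a point; this is where finiteness of type of $X$ over $k$ enters.

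I expect the main obstacle to be exactly this last step: controlling $H^*(Sper(L\otimes_k K), -)$ for \emph{all} coefficient sheaves is a genuine input from semialgebraic topology rather than a formal manipulation, so one must either quote it or reprove it from the local structure of real spectra of finite-type algebras over real closed fields (via triangulation/stratification arguments). A secondary point requiring care is the base-change/limit argument identifying the stalks of $R\varphi_*$ with cohomology of the fibres of $\varphi$; this uses that $\varphi$ is a spectral (in particular closed, with quasi-compact fibres) map and the quasi-compactness properties of real spectra. If the semialgebraic comparison theorem is available in sufficient generality one could bypass the dévissage to the affine case, but carrying it out there keeps the geometric input in its most standard form.
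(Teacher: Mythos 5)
Your opening step (the identification of the real étale topos of $X$ with sheaves on the real spectrum $R(X)$) is the same as the paper's, but from there the paper simply passes to the Delfs--Knebusch ``geometric space'' $G(X)$, whose topos is again that of $R(X)$, and quotes Delfs' comparison theorem, which is already stated for \emph{arbitrary} abelian sheaves; no dévissage to constant coefficients or to fibres is needed. Your proposal instead tries to bootstrap the general-coefficient statement from a constant-coefficient input via $R\varphi_*$, and the key step in that bootstrap has a genuine gap: the identification $(R^q\varphi_*\varphi^*\mathcal F)_\xi \cong H^q\bigl(\varphi^{-1}(\xi),\mathcal F_\xi\bigr)$ is not a ``standard limit argument''. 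The stalk is $\colim_{U\ni\xi}H^q(\varphi^{-1}(U),\varphi^*\mathcal F)$, and continuity of sheaf cohomology on spectral spaces identifies this with the cohomology of $\varphi^{-1}(\mathrm{Gen}(\xi))$, the preimage of the set of \emph{generizations} of $\xi$ --- not of the fibre --- and on this larger set $\varphi^*\mathcal F$ is not constant. Passing from there to the honest fibre requires a proper-base-change type statement, but $\varphi\colon Sper(A\otimes_k K)\to Sper(A)$ is not a closed (let alone proper) map for a general extension $K/k$, contrary to the property you invoke: already for $A=k[x,y]$ and $\alpha\in K$ transcendental over $k$, the image of the closed constructible set $\{y=\alpha x\}$ is not closed. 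Without such a base-change input, the ``stalkwise'' assertion you reduce to is just the original comparison statement localized at $\xi$, so nothing has been gained.

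A secondary misidentification: the acyclicity of $Sper(L\otimes_k K)$ is not ``the Delfs--Knebusch comparison applied to a point'' --- for $X=Spec(k)$ the theorem is vacuous, since $Sper(K)$ is a one-point space. Because $L$ is real closed and algebraic over a residue field, $Spec(L)$ is not of finite type over $k$; what you actually need is a cofiltered-limit argument writing $Sper(L\otimes_k K)=\lim_i Sper(A_i\otimes_k K)$ over finite-type models $A_i\subset L$, plus the comparison theorem with constant coefficients for each $A_i$ --- i.e.\ essentially the special case of the theorem you are proving, fed back in. (Also, embedding $L$ and $K$ into a common real closed field gives nonemptiness of $Sper(L\otimes_k K)$, not connectedness; that too needs the semialgebraic input.) Such a two-stage argument (constant coefficients on finite-type models by triangulation, then general sheaves) could in principle be organized into a proof genuinely different from the paper's three-citation one, but only after the stalk/fibre confusion above is repaired, e.g.\ by an induction on dimension in the style of Delfs or by establishing the required base change for this class of spectral maps.
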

\begin{proof}
Associated with $X$ we have the real étale site $X_{\ret}$, the topological space
$R(X)$, and a further site which we shall denote $G(X)$ and which is called
the \emph{geometric space} associated with $X$ in \cite{delfs1991homology} (this reference
requires the finite type and separatedness assumptions). The toposes associated
with $X_{\ret}$ and $R(X)$ are equivalent \cite[Theorem
1.3]{real-and-etale-cohomology}, and the toposes associated with $G(X)$ and
$R(X)$ are also equivalent \cite[Proposition 1.4]{delfs1991homology}. Finally, the comparison
result is proved for cohomology in $G(X)$ in \cite[Theorem
6.1]{delfs1991homology} (take $\Phi = X$).
\end{proof}

\begin{prop} \label{prop:real-closed-comp}
Let $f: Spec(l) \to Spec(k)$ be an extension of real closed fields. The functor
$f^*: \DM_W(k, \ZZ[1/2]) \to \DM_W(l, \ZZ[1/2])$
is an equivalence of categories. Moreover, both categories are
generated by the monoidal unit.
\end{prop}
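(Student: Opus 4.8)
The plan is to establish three facts in turn: (1) for any real closed field, $EMW_\bullet[1/2]\Mod$ is generated as a localizing subcategory by its tensor unit; (2) $f^*$ is fully faithful; and (3) $f^*$ is essentially surjective. Together (2) and (3) give that $f^*$ is an equivalence, and the ``generated by the unit'' assertion is (1) applied to both $k$ and $l$ (only real-closedness is used, so the argument is symmetric).

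For (2) I would apply the strong form of Lemma \ref{lemm:rigidity-tunit-trick} to the symmetric monoidal adjunction $f^* \dashv f_*$, where $f_*$ is the right adjoint furnished by Brown representability. The hypotheses to verify are: $EMW_k[1/2]\Mod$ is compact-rigidly generated (recalled in the proof of Lemma \ref{lemm:EMW-conservativity}); $f^*$ preserves compact objects, which is clear since $f^*(FX) = F(X_l)$ with $X_l \in Sm(l)$, and the $F(Y)$ for $Y \in Sm(l)$ are compact generators; and the unit $\tunit \to f_*f^*\tunit$ is an isomorphism. For the last point I would test against the generators $FX$, $X \in Sm(k)$: under the identifications $[FX[-p],\tunit] = H^p_{\ret}(X,\ul W[1/2])$ (in $EMW_k[1/2]\Mod$) and $[f^*FX[-p], f^*\tunit] = H^p_{\ret}(X_l, \ul W[1/2])$ (in $EMW_l[1/2]\Mod$) the relevant map becomes pullback along $X_l \to X$, which is an isomorphism by Theorem \ref{thm:real-closed-comparison} (applied to the presheaf $\tilde W[1/2]$, whose $\ret$-sheafification is $\ul W[1/2]$). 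Granting (1), step (3) is then formal: $f^*$ preserves coproducts, so being fully faithful and triangulated its essential image is a localizing subcategory; that image contains $f^*\tunit = \tunit$, hence all of $\langle\tunit\rangle = EMW_l[1/2]\Mod$.

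The real work is step (1). Here I would pass to $(EMW_k[1/2]\Mod)^{\ret}$ via Lemma \ref{lemm:EMW-conservativity}; this category is generated by the $FX$ for $X \in Sm(k)$ affine, so it suffices to show each such $FX$ lies in the thick subcategory generated by $\tunit$. Using a semialgebraic triangulation of $R(X)$ over the real closed field $k$ (available over any real closed field; cf.\ \cite{delfs1991homology}), I would choose a finite $\ret$-cover of $X$ (e.g.\ by open stars of a triangulation) whose members and all of whose finite intersections have semialgebraically contractible real spectra. For such an open $U$ one has $FU \simeq \tunit$: the groups $[FU[-p],-]$ are governed by $\ret$-cohomology of $U$, which agrees with semialgebraic cohomology (as in the proof of Theorem \ref{thm:real-closed-comparison}) and is invariant under semialgebraic homotopy equivalence \cite{delfs1991homology}. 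Finally $FX$ is reconstructed from the \v{C}ech nerve of this cover by $\ret$-descent; since only finitely many intersections $U_{i_0\cdots i_p}$ occur, the homotopy colimit realizes $FX$ as an iterated cofibre of copies of $\tunit$, so $FX \in \langle\tunit\rangle$.

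The main obstacle is precisely this last step: importing enough semialgebraic topology — finiteness of the semialgebraic homotopy type of $R(X)$, i.e.\ triangulability, together with homotopy invariance of semialgebraic/$\ret$-cohomology — to conclude that $F$ of a smooth variety is a ``finite cell object'' built from $\tunit$. A secondary point requiring care is the identification of $[FX[-p],\tunit]$ with $\ret$-cohomology (rather than Nisnevich cohomology) of $\ul W[1/2]$; this is exactly what Lemma \ref{lemm:EMW-conservativity} provides, since the functor $L$ there is an equivalence. Once (1) is in hand, full faithfulness of $f^*$ via the $\tunit$-rigidity trick together with the comparison theorem, and then essential surjectivity, are comparatively routine.
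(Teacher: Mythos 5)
Your full-faithfulness step is exactly the paper's argument (Lemma \ref{lemm:rigidity-tunit-trick} applied to $f^*\dashv f_*$, compact-rigid generation, and the comparison of $H^p(X,\ul W[1/2])$ with $H^p(X_l,\ul W[1/2])$ via Theorem \ref{thm:real-closed-comparison} and Corollary \ref{corr:Nis-ret-sheaves}), and the deduction of essential surjectivity from generation by the unit is also the same formal step. The genuine gap is in your step (1). First, for a piece $U$ with semialgebraically contractible $R(U)$, computing $[FU[-p],\tunit]\cong H^p_{\ret}(U,\ul W[1/2])$ only controls maps \emph{into} $\tunit$; to conclude that the natural map $FU\to\tunit$ is an equivalence you must show its cone dies against the \emph{generators} mapping in, i.e.\ compute $[FX[n],FU]$ (a Witt \emph{homology} of $U$), or equivalently the homotopy sheaves of $FU$ -- and neither is supplied by the cohomological comparison. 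Testing only against $\tunit$ would suffice precisely if the category were already generated by $\tunit$, which is what you are trying to prove, so the argument as sketched is circular. Second, the proposed cover is problematic: open stars of a semialgebraic triangulation of $R(X)$ are semialgebraic open subsets that are in general not of the form $R(U)$ for Zariski opens $U\subset X$, and rét-covers must consist of étale $X$-schemes; moreover the \v{C}ech nerve is built from fibre products $U_{i}\times_X U_{j}$, not from intersections inside $R(X)$, so one would have to arrange an étale cover all of whose iterated fibre products have contractible real spectra. Nothing in the sketch addresses this, and it is a substantial extra input (essentially the content of a later, stronger theorem identifying the rét-localisation with sheaves on the real spectrum), not a routine import from \cite{delfs1991homology}.

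The paper avoids all of this by proving generation \emph{after}, and by means of, full faithfulness: if $F$ is a heart object with $F(k)=0$, then $F$ is unramified, and by Corollary \ref{corr:Nis-ret-sheaves} it is a rét-sheaf, so it vanishes as soon as $F(K^r)=0$ for every real closure $K^r$ of every finitely generated $K/k$; but $K^r/k$ is again an extension of real closed fields, so by the full faithfulness already established $F(K^r)=[f^*\tunit,f^*F]=[\tunit,F]=F(k)=0$. Since over a field $[\tunit[n],E]$ computes $\ul\pi_n(E)(k)$, this heart statement gives generation by the unit for every real closed base, in particular for $l$, which is all that essential surjectivity needs. I recommend you restructure your step (1) along these lines rather than via a cellular decomposition; as it stands, that step is not proved.
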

\begin{proof}
We first show that the functor is fully faithful. We wish to apply Lemma
\ref{lemm:rigidity-tunit-trick}. Certainly the functor $f^*$ is monoidal with
right adjoint $f_*$, and both $\DM_W(k, \ZZ[1/2])$ and $\DM_W(l, \ZZ[1/2])$ are
compact-rigidly generated (by Lemma \ref{lemm:DMW-basic}).
It remains to show that $f_*f^* \tunit \wequi
\tunit$. Since the $FX$ for $X \in Sm(k)$ generate $\DM_W(k, \ZZ[1/2])$, this
amounts to comparing $[FX[-p], \tunit] = H^p_\ret(X, \ZZ[1/2])$ and $[FX[-p], f_*f^*\tunit] =
[FX_l[-p], \tunit] = H^p_\ret(X_l, \ZZ[1/2])$. Thus we conclude using
Theorem \ref{thm:real-closed-comparison} and Corollary \ref{corr:Nis-ret-sheaves}

Next we need to show that the functor is essentially surjective. To do this it
suffices to show that $\DM_W(k,\ZZ[1/2])$ is generated by the monoidal unit $\tunit$.
For this it is enough to show that if $F \in \DM_W(k, \ZZ[1/2])^\heart$ is a sheaf
such that $F(k) = 0$ then $F=0$. Now $F$ is unramified, so it is enough to show
that $F(K) = 0$ for every finitely generated field extension $K/k$. But
Corollary \ref{corr:Nis-ret-sheaves} applies and thus $F$ is a rét-sheaf.
Hence it suffices to
show that $F(K^r) = 0$ for every real closure $f: Spec(K^r) \to Spec(K)$. But
this just says that $[f^* \tunit, f^* F] = 0$, which we know is true because $[f^*
\tunit, f^* F] = [\tunit, F] = F(k)$ by the fully faithfulness we already
proved.
\end{proof}

\begin{prop} \label{prop:W-real-closed-comparison}
The category $\DM_W(k, \ZZ[1/2])$ is canonically equivalent to $D(\ZZ[1/2])$,
where $k$ is any real closed field.
\end{prop}
\begin{proof}
There exists an adjunction of sites $e: \ZZ[1/2]\Mod \leftrightarrows
Shv(Sm(k)_\ret, \ZZ[1/2]): r$,
where $(rF) = F(Spec(k))$ and $e$ is the constant sheaf functor.
Composing with the localisation adjunction, we get $\tilde{e}: D(\ZZ[1/2]) \leftrightarrows
\DM_W(k, \ZZ[1/2])^\ret = \DM_W(k, \ZZ[1/2]): \tilde{r}$. Now $e \ZZ[1/2] =
a_\ret \ZZ[1/2]$ is already $\Aone$-$\Gm$-local, and so $\tilde{r} \tilde{e}
\tunit = \tunit$. By a further application of Lemma
\ref{lemm:rigidity-tunit-trick} we conclude that $\tilde{e}$ is fully faithful.
Proposition \ref{prop:real-closed-comp} implies that $\tilde{e}$ has dense image
and so is essentially surjective. This concludes the proof.
\end{proof}

Now let $k$ be an
arbitrary field (of characteristic zero) and $\sigma$ an ordering of $k$. Write
$k_\sigma$ for a real closure of $(k, \sigma)$. We denote by $M_\sigma[1/2]:
\SH(k) \to D(\ZZ[1/2])$ the composite $\SH(k) \to \SH(k_\sigma) \to
\DM_W(k_\sigma, \ZZ[1/2]) \wequi D(\ZZ[1/2])$ and call it the \emph{real motive
associated with $\sigma$}. We have thus proved the following:

\begin{thm}[Conservativity II] \label{thm:conservativity-II}
Let $k$ be a field of characteristic zero. Assume that $k$ has finite virtual
2-étale cohomological dimension. Write $Sper(k)$ for the set of orderings of $k$.

If $E \in \SH(k)$ is connective and slice-connective, and we have that $0 \wequi
ME \in \DM(k)$ and that for each $\sigma \in Sper(k)$, $0 \wequi M_\sigma[1/2](E)
\in D(\ZZ[1/2])$, then $E \wequi 0$.
\end{thm}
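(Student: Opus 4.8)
The plan is to combine the two conservativity results already established, namely Corollary \ref{corr:M-SH-} and the study of $EMW_k[1/2]\Mod$ culminating in Corollary \ref{corr:W-real-closed-comparison}. First I would apply Corollary \ref{corr:M-SH-}: since $k$ has characteristic zero and finite virtual $2$-étale cohomological dimension, and $E$ is connective, slice-connective, with $ME \simeq 0$, we conclude that $E \in \SH(k)_2^-$. In particular $2$ acts invertibly on $E$ and $\epsilon$ acts as $+1$, so $\eta$ is invertible on $E$. It now suffices to prove that an object of $\SH(k)_2^-$ which is connective and slice-connective and killed by all the real motives $M_\sigma^\RR[1/2]$ is zero.

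The next step is to pass to the category $EMW_k[1/2]\Mod$ via the functor $F$ of Lemma \ref{lemm:F-conservativity}, which is conservative on connective objects. Thus it is enough to show that $FE \simeq 0$ in $EMW_k[1/2]\Mod$. Here I would invoke Corollary \ref{corr:W-basechange-cons} with respect to the family $\{Spec(k_\sigma) \to Spec(k)\}_{\sigma \in Sper(k)}$, where $k_\sigma$ is a real closure of $(k,\sigma)$: every ordering of $k$ is realized by one of these, so this family is a rét-covering and the base change functor $EMW_k[1/2]\Mod \to \prod_\sigma EMW_{k_\sigma}[1/2]\Mod$ is conservative. Hence it suffices to show that the image of $FE$ in each $EMW_{k_\sigma}[1/2]\Mod$ vanishes. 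But that image, composed with the canonical equivalence $EMW_{k_\sigma}[1/2]\Mod \approx D(\ZZ[1/2])$, is by definition precisely $M_\sigma^\RR[1/2](E)$, which is assumed to be $\simeq 0$. (One must note that $F$ commutes with the base change along $Spec(k_\sigma) \to Spec(k)$ up to natural isomorphism, so that the composite $\SH(k)_2^- \xrightarrow{F} EMW_k[1/2]\Mod \to EMW_{k_\sigma}[1/2]\Mod$ agrees with $\SH(k) \to \SH(k_\sigma) \xrightarrow{F} EMW_{k_\sigma}[1/2]\Mod$; this is a routine compatibility of the constructions.) Therefore $FE \simeq 0$, whence $E \simeq 0$ in $\SH(k)_2^-$, and so $E \simeq 0$ in $\SH(k)$.

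The only genuine subtlety — and the step I expect to require the most care — is the bookkeeping that identifies the composite functor with the real motive $M_\sigma^\RR[1/2]$ as it was defined in the paragraph preceding the statement. This is purely a matter of unwinding definitions: $M_\sigma^\RR[1/2]$ was defined as $\SH(k) \to \SH(k_\sigma) \to EMW_{k_\sigma}[1/2]\Mod \approx D(\ZZ[1/2])$, and one needs that the functor $F: \SH(k) \to EMW_k[1/2]\Mod$ followed by base change to $k_\sigma$ equals, up to canonical isomorphism, base change to $k_\sigma$ followed by $F$ on $\SH(k_\sigma)$ — which holds because $EMW$, being built from the sphere spectrum by a truncation that is stable under the (smooth, $t$-exact) base change functor, is preserved by base change, together with the fact that smashing commutes with base change. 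Given that compatibility, everything else is a direct concatenation of the conservativity statements already proved. One should also remark that when $k$ is non-orderable the set $Sper(k)$ is empty, $\SH(k)_2^- = 0$, and the theorem reduces to Corollary \ref{corr:M-SH-} alone, so the statement is consistent in that degenerate case.
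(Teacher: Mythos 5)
Your proof is correct and follows exactly the paper's route: the paper's proof is literally the concatenation of Corollary \ref{corr:M-SH-}, Lemma \ref{lemm:F-conservativity}, Corollary \ref{corr:W-basechange-cons} and Corollary \ref{corr:W-real-closed-comparison}, and your write-up fills in the same chain, including the (routine but necessary) compatibility of $F$ with base change along $Spec(k_\sigma) \to Spec(k)$ that makes the composite agree with the definition of $M_\sigma^\RR[1/2]$.
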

\begin{proof}
Combine Corollary \ref{corr:M-SH-}, Proposition \ref{prop:DMW-t-tsructure}, Corollary
\ref{corr:W-basechange-cons} and Proposition \ref{prop:W-real-closed-comparison}.
\end{proof}

\paragraph{Remark 1.} If $\sigma: k \subset \RR$ is an embedding of ordered
fields then one might define $M_\sigma'[1/2](E) \in D(\ZZ[1/2])$ as
$C_*(R_\RR(E), \ZZ[1/2])$, where $R_\RR$ denotes ``real realisation''
\cite[Section 3.3.3]{A1-homotopy-theory} and $C_*$ denotes the singular complex.
It is not hard to find a canonical equivalence $M_\sigma'[1/2](E) \wequi
M_\sigma[1/2](E)$, but we do not need this here.

\paragraph{Remark 2.} The author contends that the results in this section can be
proved more directly by using transfer considerations. For example to prove
Corollary \ref{corr:Nis-ret-sheaves}, it seems like a good first step to prove:
if $k$ is a field of characteristic zero and $H_*$ is a homotopy module which is
also a module over $\ul{W}[1/2]$ (i.e. $2$ and $\eta$ act invertibly on $H_*$),
then for any finitely generated field extension $K/k$ and any real étale cover
$\{Spec(L_i) \to Spec(K)\}_{i \in I}$, the map
\[ H_*(K) \to \prod_{i \in I} H_*(L_i) \]
is injective. Since $H_*$ has transfers satisfying the projection formula,
for this it is enough to prove that the ideal inside $W(K)[1/2]$ generated by
$tr_{L_i/K}(W(L_i)[1/2])$ is all of $W(K)[1/2]$. This can be checked directly,
using that $W(K)[1/2] \wequi \ZZ[1/2]^{Sper(K)}$ (and similarly for the $L_i$).

\emph{Added during revision:} this hope has now been fulfilled
\cite{bachmann-real-etale}.

\appendix
\section{Compact Objects in Abelian and Triangulated Categories}
\label{sec:compact-objects}

Here we collect some well-known but hard to reference facts about compact
objects. Recall that if $\mathcal{C}$ is a category with (countable) filtered
colimits then an object $X \in \mathcal{C}$ is called \emph{(countably) compact} if for
every (countable) filtering diagram $\{T_\lambda\}_{\lambda \in \Lambda}$ in
$\mathcal{C}$ the natural map $\colim_\lambda \Hom(X, T_\lambda) \to \Hom(X,
\colim_n T_\lambda)$ is an isomorphism.

Since triangulated categories do not usually have filtered colimits, this
definition would not make much sense. Instead an object $X$ in a triangulated
category $\mathcal{C}$ with (countable) coproducts is called (countably) compact
if for every (countable) family $\{T_\lambda\}_{\lambda \in \Lambda}$ (here $\Lambda$ is
just a set) the natural map $\bigoplus_\lambda \Hom(X, T_\lambda) \to \Hom(X,
\bigoplus_\lambda T_\lambda)$ is an isomorphism \cite[Definition 0.1]{Neeman1992}.

Finally recall sequential countable homotopy colimits in triangulated
categories. Suppose $\{T_n, f_n: T_n \to T_{n+1}\}_{n \in \NN}$ is a sequence in
the triangulated category $\mathcal{C}$. There is a natural map
\[ \id - s: \bigoplus_n T_n \to \bigoplus_n T_n,
   \text{``}(t_1, t_2, \dots) \mapsto (t_1, t_2 - f_1 t_1, t_3 - f_2 t_2, \dots).\text{''} \]
A cone on this map is denoted $\hocolim_n T_n$ \cite[Definition 1.4]{Neeman1992}.

Suppose $X$ is a countably compact object in the triangulated category $\mathcal{C}$ and
$\{T_n\}$ is a sequence. After choosing a presentation as
a cone, there is a natural map $\colim_n \Hom(X, T_n) \to \Hom(X, \hocolim_n T_n)$.
This map is an isomorphism \cite[Lemma 1.5]{Neeman1992}.

\begin{lemm} Let $\mathcal{C}$ be a $t$-category with countable coproducts and
such that $\mathcal{C}^\heart$ satisfies Ab5 and the inclusion
$\mathcal{C}^\heart \hookrightarrow \mathcal{C}$ preserves countable coproducts.
If $\{T_n\} \in
\mathcal{C}^\heart$ is a sequence of objects then there is an isomorphism
\[ \hocolim_n T_n \wequi \colim_n T_n. \]
Here $\hocolim_n T_n$ is computed in $\mathcal{C}$ by the above formula, and
$\colim_n T_n$ is computed in the abelian category $\mathcal{C}^\heart$.
\end{lemm}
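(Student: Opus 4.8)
The plan is to realize both $\hocolim_n T_n$ and $\colim_n T_n$ as the third term of a distinguished triangle with the same first map, and then invoke the (non-canonical) uniqueness of cones.

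First I would work inside the abelian category $\mathcal{C}^\heart$. Recall the standard description of a sequential colimit: writing $s\colon \bigoplus_n T_n \to \bigoplus_n T_n$ for the morphism carrying the $n$-th summand into the $(n{+}1)$-st via $f_n$ (coproduct taken in $\mathcal{C}^\heart$), one has $\colim_n T_n = \operatorname{coker}(\id - s)$, and moreover $\id - s$ is a monomorphism. Monicity is exactly where Ab5 enters: $\id - s$ is the filtered colimit over $N$ of the maps $\bigoplus_{n\le N} T_n \to \bigoplus_{n\le N+1} T_n$, each of which is a split monomorphism (composing with the projection back to $\bigoplus_{n\le N}T_n$ yields $\id$ minus a nilpotent endomorphism), and filtered colimits in an Ab5 category are exact, hence preserve monomorphisms. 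Thus there is a short exact sequence $0 \to \bigoplus_n T_n \xrightarrow{\id - s} \bigoplus_n T_n \to \colim_n T_n \to 0$ in $\mathcal{C}^\heart$, and since a short exact sequence in $\mathcal{C}^\heart$ always extends to a distinguished triangle in $\mathcal{C}$, we obtain a triangle $\bigoplus_n T_n \xrightarrow{\id - s} \bigoplus_n T_n \to \colim_n T_n \to (\bigoplus_n T_n)[1]$.

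On the other side, the defining formula for the homotopy colimit is a triangle $\bigoplus_n T_n \xrightarrow{\id - s} \bigoplus_n T_n \to \hocolim_n T_n \to (\bigoplus_n T_n)[1]$ with the same morphism $\id - s$ — provided the coproduct occurring here, formed in $\mathcal{C}$, coincides with the one formed in $\mathcal{C}^\heart$. This comparison is the crux. One half is easy: $\mathcal{C}_{\ge 0}$ is closed under coproducts (because $\Hom(\bigoplus_i X_i, -) = \prod_i \Hom(X_i,-)$ and $\mathcal{C}_{\ge 0} = {}^{\perp}\mathcal{C}_{\le -1}$), so the coproduct $X$ formed in $\mathcal{C}$ lies in $\mathcal{C}_{\ge 0}$; and since truncation $(-)_{\le 0}$ is a left adjoint it preserves coproducts, so from $\Hom_{\mathcal{C}^\heart}(X_{\le 0}, A) = \Hom_{\mathcal{C}}(X, A) = \prod_n \Hom_{\mathcal{C}^\heart}(T_n, A)$ for $A \in \mathcal{C}^\heart$ one reads off $\pi_0^{\mathcal{C}}(X) = \colim_n$-side's coproduct, i.e. $X_{\le 0} \cong \bigoplus_n^{\heart} T_n$. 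It remains to show $X$ has no homotopy in positive degrees, i.e. lies in $\mathcal{C}^\heart$; here I would write $X$ as the homotopy colimit of the partial sums $T_1 \oplus \dots \oplus T_N$ — honest biproducts, hence in $\mathcal{C}^\heart$ — feed the defining triangle of that homotopy colimit into the long exact sequence of $\pi_*^{\mathcal{C}}$, use once more the Ab5 argument to see $\id - s$ is injective on each $\pi_i^{\mathcal{C}}$ of a coproduct of heart objects, and combine this with the fact that a cone on a morphism between objects of $\mathcal{C}_{\le m}$ lies in $\mathcal{C}_{\le m+1}$; a careful induction then yields $X \in \mathcal{C}^\heart$. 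Granting this, the two triangles above share a first morphism, so their third terms are isomorphic, giving $\hocolim_n T_n \approx \colim_n T_n$.

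The main obstacle is precisely this last point: that the countable coproduct of objects of $\mathcal{C}^\heart$, formed in $\mathcal{C}$, already lies in $\mathcal{C}^\heart$ (equivalently, that $\pi_0^{\mathcal{C}}$ commutes with countable coproducts of heart objects). This is where both the countability of the coproduct and the Ab5 hypothesis are genuinely used; without a bound on the hearts the statement can fail, so the argument must bootstrap through the finite-coproduct and homotopy-colimit presentation rather than treat the infinite coproduct head-on.
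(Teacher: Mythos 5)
Your main line is exactly the paper's: Ab5 makes $\id - s$ a monomorphism (the paper just says the sequence is exact on the left "because of Ab5"; your presentation of $\id-s$ as a filtered colimit of split monomorphisms is a correct way to see this), the resulting short exact sequence in $\mathcal{C}^\heart$ lifts to a distinguished triangle, and comparing two cones on the same morphism $\id - s$ gives the (non-canonical) isomorphism. Up to that point there is nothing to object to.

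The problem is the step you yourself single out as the crux: that the countable coproduct $X=\bigoplus_n T_n$ formed in $\mathcal{C}$ lies in $\mathcal{C}^\heart$, equivalently agrees with the coproduct formed in the heart. Your sketched argument for this is circular. You write $X \approx \hocolim_N (T_1\oplus\dots\oplus T_N)$, but the first two terms of the defining triangle of that homotopy colimit are themselves countable $\mathcal{C}$-coproducts of heart objects, i.e. objects of exactly the unknown type: to "use once more the Ab5 argument" for injectivity of $\id-s$ on $\pi_i^{\mathcal{C}}$ of such a coproduct you would already need $\pi_i^{\mathcal{C}}\bigl(\bigoplus_N S_N\bigr) \cong \bigoplus_N \pi_i^{\mathcal{C}}(S_N)$, and to get your induction started via "a cone on a morphism between objects of $\mathcal{C}_{\le m}$ lies in $\mathcal{C}_{\le m+1}$" you would need the coproduct to be bounded above in the first place; both are instances of the statement being proved. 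Closure of $\mathcal{C}_{\le 0}$ under countable coproducts is an additional property of a $t$-structure, not a formal consequence of "$\mathcal{C}$ has countable coproducts and $\mathcal{C}^\heart$ satisfies Ab5": the Ab5 hypothesis governs colimits internal to the heart and gives no hold on $\pi_i^{\mathcal{C}}$ of a $\mathcal{C}$-coproduct for $i>0$. To be fair, the paper's own proof silently identifies the two coproducts as well; in the intended applications ($\SH(k)$ and $EMW_k[1/2]\Mod$ with the homotopy $t$-structure) the identification is standard since homotopy sheaves commute with arbitrary direct sums. So the honest repair is to add the hypothesis that $\mathcal{C}_{\le 0}$ is closed under countable coproducts (equivalently, that $\pi_0^{\mathcal{C}}$ preserves countable coproducts of heart objects), or to verify it in the application, rather than to attempt to deduce it as you do.
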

We recall that Ab5 means that filtered colimits are exact. This is true in many
abelian categories, including $\HI_*(k) = \SH(k)^\heart$.
\begin{proof}
The point is that because of Ab5 the sequence
\[ 0 \to \bigoplus_n T_n \xrightarrow{\id - s} \bigoplus_n T_n \to \colim_n T_n \to 0 \]
is exact on the left (exactness at all other places follows from the definition
of colimit). In any $t$-category short exact sequences in the heart yield
distinguished triangles in the triangulated category, so we are done by the
definition of $\hocolim$.
\end{proof}

\begin{corr} In the above situation, if $E \in \mathcal{C}_{\ge 0}$ is countably compact
then so is $\pi_0^\mathcal{C} E \in \mathcal{C}^\heart$.
\end{corr}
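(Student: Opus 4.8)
The plan is to reduce everything to the two results already available: the preceding lemma identifying $\hocolim$ with $\colim$ in the heart, and Neeman's lemma relating countably compact objects to homotopy colimits, glued together by the adjunction between $\mathcal{C}$ and $\mathcal{C}_{\le 0}$. A general countable filtered diagram may be replaced at the outset by a cofinal $\NN$-chain, so it suffices to treat a sequence $\{T_n\}_{n \in \NN}$ in $\mathcal{C}^\heart$ and show that $\colim_n [\pi_0^\mathcal{C} E, T_n] \to [\pi_0^\mathcal{C} E, \colim_n T_n]$ is an isomorphism, the right-hand colimit being formed in $\mathcal{C}^\heart$.

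First I would observe that for $E \in \mathcal{C}_{\ge 0}$ one has $E \approx E_{\ge 0}$, hence $\pi_0^\mathcal{C} E = (E_{\ge 0})_{\le 0} = E_{\le 0}$, and this object already lies in $\mathcal{C}^\heart$. Since $(-)_{\le 0}$ is left adjoint to the inclusion $\mathcal{C}_{\le 0} \hookrightarrow \mathcal{C}$, the unit $E \to E_{\le 0}$ induces, for every $T \in \mathcal{C}_{\le 0}$, a natural isomorphism $[\pi_0^\mathcal{C} E, T] = [E_{\le 0}, T] \xrightarrow{\sim} [E, T]$. This applies, compatibly, to every $T_n$, to $\colim_n T_n$ (which lies in $\mathcal{C}^\heart \subset \mathcal{C}_{\le 0}$), and to the structure maps between them, so that the comparison map $\colim_n [\pi_0^\mathcal{C} E, T_n] \to [\pi_0^\mathcal{C} E, \colim_n T_n]$ is identified with $\colim_n [E, T_n] \to [E, \colim_n T_n]$.

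Next I would invoke the previous lemma to rewrite $\colim_n T_n \approx \hocolim_n T_n$, the homotopy colimit being formed in $\mathcal{C}$, in a way compatible with the canonical maps out of the $T_n$ (both factor through $\bigoplus_n T_n$). Since $E$ is countably compact in $\mathcal{C}$, Neeman's lemma \cite[Lemma 1.5]{Neeman1992} then gives that $\colim_n [E, T_n] \to [E, \hocolim_n T_n]$ is an isomorphism. Composing, $\colim_n [\pi_0^\mathcal{C} E, T_n] \approx \colim_n [E, T_n] \approx [E, \hocolim_n T_n] \approx [E, \colim_n T_n] \approx [\pi_0^\mathcal{C} E, \colim_n T_n]$, and since each arrow is the canonical comparison map, so is the composite, which finishes the proof.

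The main thing to be careful about is naturality bookkeeping: one must check that the isomorphism $\hocolim_n T_n \approx \colim_n T_n$ produced by the previous lemma genuinely intertwines the two families of canonical maps out of the $T_n$, and that the resulting comparison map agrees with the one built into the definition of countable compactness in $\mathcal{C}^\heart$. Both become transparent once $\hocolim_n T_n$ is written as the cone on $\id - s \colon \bigoplus_n T_n \to \bigoplus_n T_n$, but this is the step that needs attention rather than the purely formal manipulations.
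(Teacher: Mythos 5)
Your proof is correct and follows essentially the same route as the paper: identify $\pi_0^\mathcal{C} E$ with $E_{\le 0}$ and use the truncation adjunction to replace it by $E$ against objects of $\mathcal{C}_{\le 0}$, rewrite $\colim_n T_n$ as $\hocolim_n T_n$ via the preceding lemma, and conclude by countable compactness of $E$ in $\mathcal{C}$ (Neeman's lemma). Your extra care about cofinal $\NN$-chains and the compatibility of the comparison maps only makes explicit what the paper leaves implicit.
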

\begin{proof}
Let $\{T_n\}$ be a sequence in $\mathcal{C}^\heart$. We compute
\begin{align*}
  \Hom(\pi_0^\mathcal{C} E, \colim_n T_n) &\stackrel{(1)}{=} \Hom(E, \colim_n T_n) \\
  &\stackrel{(2)}{=} \Hom(E, \hocolim_n T_n) \\
  &\stackrel{(3)}{=} \colim_n \Hom(E, T_n) \\
  &\stackrel{(4)}{=} \colim_n \Hom(\pi_0^\mathcal{C} E, T_n).
\end{align*}
This is the required result. (Here (1) is because $\colim_n T_n \in
\mathcal{C}_{\le 0}$, (2) is by the lemma, (3) is by compactness of $E$ and (4)
is because $T_n \in \mathcal{C}_{\le 0}$.)
\end{proof}

\section{Construction of $F_\bullet H$; General Case}
\label{sec:constructing-F*}

In this section $k$ is a perfect field; we shall prove Theorem
\ref{thm:slice-filtration} by constructing the relevant sheaves by hand and
establishing the necessary properties. We are trying to make this appendix
reasonably self-contained, in particular the first and introductory subsection.
There is no way around the fact however that it relies very heavily on the first
half of \cite{A1-alg-top}, so when proving technical results we will freely
reference that book.

\subsection{Overview of the Proof}
\label{subsec:app-overview}

Suppose for a moment that we can show (1) to (3). In Appendix
\ref{sec:compact-objects} we proved that if $E \in \SH(k)^{cpt}_{\ge 0}$ then
also $\ul \pi_0(E)_* \in \HI_*(k)$ is (countably) compact. (We also clarified the meaning of
compactness there.) Statement (4) then follows from the observation that
\[ H_* = \bigcup_n F_n H_*. \]
(Indeed $(F_n H)_n = H_n$ by definition.)

Thus we need only establish the following.

\begin{thm} \label{thm:slices-by-hand}
Let $k$ be a perfect field, $M_* \in \HI_*(k)$, and $n > 0, m \in
\ZZ$. Then there exists a strictly invariant sheaf $(\ul K_n^{MW} M_m)^{tr}$
with sections over fields given by formula \eqref{eq:tr-closure-formula}, and it
satisfies $\Omega(\ul K_n^{MW} M_m)^{tr} = (\ul K_{n-1}^{MW} M_m)^{tr}$.
(We put $(\ul K_0^{MW} M_m)^{tr} = M_m$.)
\end{thm}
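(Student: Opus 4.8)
The strategy is to work entirely with the sections over finitely generated (or more generally essentially smooth) field extensions $L/k$, exploiting the fact that strictly invariant sheaves are unramified, so that a strictly invariant subsheaf of a given strictly invariant sheaf is uniquely determined by its sections over fields, and conversely a compatible system of subgroups $G(L) \subseteq H(L)$ that is ``unramified'' (stable under residues, transfers, etc.) assembles to such a subsheaf. Thus the first step is to define, for each $L \in \mathcal F_k$, the subgroup
\[ (\ul K_n^{MW} M_m)^{tr}(L) := \langle tr_{L'/L}(\ul K_n^{MW}(L') \cdot M_m(L')) \rangle_{L'/L \text{ finite}} \subseteq M_{n+m}(L), \]
using the total pairing $\ul K_*^{MW} \otimes M_* \to M_*$ recalled in Section \ref{sec:homotopy-modules} and the cohomological transfers of Morel. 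The heart of the argument is then to show that this assignment $L \mapsto (\ul K_n^{MW} M_m)^{tr}(L)$ actually comes from a (necessarily unique) strictly invariant \emph{subsheaf} of $M_{n+m}$, and that these subsheaves are compatible across $n$ via the contraction $\Omega$, i.e. $\Omega(\ul K_n^{MW} M_m)^{tr} = (\ul K_{n-1}^{MW} M_m)^{tr}$ inside $M_{n+m-1}$.

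The key steps, in order: (1) Recall Morel's axiomatic description of strictly invariant sheaves in terms of their fibers over fields together with residue/specialization maps and transfers; in particular recall the criterion that a subpresheaf defined on fields extends to a strictly invariant subsheaf precisely when it is stable under restriction to specializations and compatible with the residue maps $\partial_v$ (this is where the ``unramifiedness'' bookkeeping lives). (2) Check the residue compatibility: a residue of a transferred product $\partial_v(tr_{L'/L}(a \cdot b))$ can be computed, via the compatibility of transfers with residues (Morel's geometric transfers come from honest maps of Gersten-type complexes), in terms of residues of $a \in \ul K_n^{MW}$ and $b \in M_m$ and transfers along the induced residue field extensions; since $\ul K_n^{MW}$ itself is a strictly invariant sheaf with transfers and the ``$(\cdot)^{tr}$'' construction is tautologically closed under transfers, the residue lands back in the analogously-defined subgroup at the residue field, giving step (1)'s criterion. (3) Identify $\Omega$ of the resulting sheaf: using $\ul K_n^{MW} = \Omega \ul K_{n+1}^{MW}$ (Morel) together with the fact that contraction $\Omega = \iHom(\ZZ\Gm, -)$ is exact on $\HI(k)$ and commutes with the pairing, compute $\Omega(\ul K_n^{MW} M_m)^{tr}$ on fields: $\Omega$ of a section is the ``residue at the canonical valuation'' of $\ZZ\Gm$, and one gets $(\ul K_{n-1}^{MW} M_m)^{tr}$ on the nose; the base case $n=1$, where $\ul K_0^{MW} = \ul{GW}$ and the pairing with the unit recovers $M_m$ itself, is checked directly, and $n=0$ is the convention.

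I expect the main obstacle to be step (2) — the compatibility of the transfer–product subgroups with residue maps — because it requires knowing that Morel's cohomological transfers interact correctly both with the $\ul K_*^{MW}$-module structure (projection formula) and with the residue maps appearing in the unramifiedness criterion, and these facts, while all ``known'', are scattered and stated with varying conventions in \cite{A1-alg-top}. Concretely, I would need a clean statement that $\ul K_*^{MW}$ is a ``sheaf with generalized transfers'' in the sense that makes the Gersten complex a complex of such sheaves, and that the module pairing $\ul K_*^{MW} \otimes M_* \to M_*$ is a morphism of such; granting that, the subsheaf criterion is formal. A secondary (but routine) point is verifying that the subgroups are genuinely functorial for field extensions $L \to \tilde L$ in $\mathcal F_k$ (not just finite ones), which follows from base-change compatibility of transfers, i.e. transfers commute with flat pullback.
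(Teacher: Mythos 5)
Your skeleton (define the transfer--product subgroups over fields, check stability under residues using compatibility of Morel's transfers with the Rost--Schmid differentials, assemble into a subsheaf, compute $\Omega$ on fields) is the same as the paper's, but there is a genuine gap at the decisive point: you claim that once the subgroups over fields are stable under specializations and residues, the passage to a \emph{strictly} $\Aone$-invariant subsheaf is ``formal''. What Morel's axiomatic description of unramified sheaves gives you formally (this is Proposition \ref{prop:constructing-I} in the paper, via \cite[Theorem 2.11]{A1-alg-top}) is only an unramified Nisnevich subsheaf $I \subset M_{n+m}$ with $I(X) = M(X) \cap I(X^{(0)})$; being a subsheaf of an invariant sheaf it is $\Aone$-invariant, but \emph{strict} invariance is not automatic and is where most of the work lies. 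The paper shows (Lemma \ref{lemm:invariant-I}) that such a subsheaf is strictly invariant iff $H^1(\Aone_L, I) = 0$ for all $L \in \mathcal{F}_k$, and verifying this vanishing requires building the truncated Rost--Schmid-type complex out of the groups $(\ul K_\bullet^{MW} M_m)^{tr}$, proving it is exact on one-dimensional Nisnevich local schemes (which needs surjectivity of the boundary onto the transferred subgroup at the closed point, obtained by lifting finite extensions of the residue field to finite extensions of the function field and invoking Theorem \ref{thm:comm-tr}), and then proving that the differential $(\ul K_n^{MW} M_m)^{tr}(K(T)) \to \bigoplus_{x \in \Aone_K^{(1)}} (\ul K_{n-1}^{MW} M_m)^{tr}(k(x), \Lambda_x^{\Aone})$ is surjective. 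None of this is formal, and your plan contains no substitute for it.

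Two further points in the same direction. First, your identification of $\Omega$ needs both inclusions: for a strictly invariant subsheaf one has $\Omega(\ul K_n^{MW} M_m)^{tr}(K) = \partial_0^T\bigl((\ul K_n^{MW} M_m)^{tr}(K(T))\bigr)$, and while the inclusion into $(\ul K_{n-1}^{MW} M_m)^{tr}(K)$ follows from the residue lemma you envisage, the reverse inclusion is exactly the surjectivity statement above --- the same nontrivial ingredient as for strict invariance, so it cannot be waved through as ``$\Omega$ commutes with the pairing''. Second, the specialization stability needed to invoke the extension criterion is deduced from residue stability via $s^\pi(x) = \langle -1 \rangle \partial^\pi([-\pi]x)$, and this in turn rests on a Leibniz-type formula $\partial^\pi(\alpha m) = \partial^\pi(\alpha)s^\pi(m) + s^\pi(\alpha)\partial^\pi(m) + \partial^\pi(\alpha)[-1]\partial^\pi(m)$ valid for an arbitrary homotopy module $M_*$; this is not in the literature in the form you need and must be proved (the paper does so in Proposition \ref{prop:s-partial-formulas} by a $\Theta$-homomorphism argument reducing to the generators $[\pi]$ and units of $\mathcal{O}_v$). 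You correctly flag this second item as the delicate spot, so I count it as incomplete rather than wrong; but the missing strict-invariance argument is a real gap in the proposal.
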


In this section we shall outline the strategy, postponing the proofs of certain
technical lemmas to separate subsections at the end. As a very highlevel
overview, we will construct a presheaf $(\ul K_n^{MW} M_m)^{tr}$ which has the
correct sections on fields, show that it is strictly invariant, and finally
deduce that $\Omega(\ul K_n^{MW} M_m)^{tr} = (\ul K_{n-1}^{MW} M_m)^{tr}$.

Recall that if $F$ is a presheaf of sets on $Sm(k)$ and $X$ is an essentially
smooth scheme, i.e. the inverse limit in the category of schemes of a filtering
diagram of smooth schemes with affine transition morphisms $X \iso \lim_i X_i$,
then the colimit $F(X) := \colim_i F(X_i)$ is well-defined independent of the
presentation $X \iso \lim_i X_i$ \cite[Proposition 8.13.5]{EGAIV}. One thus obtains an extension of
$F$ to the category of essentially smooth schemes which is continuous in the
sense that it turns filtering inverse limits with affine transition morphisms
into colimits.

Recall also that a presheaf $F$ on $Sm(k)$ is called \emph{unramified} if (a) for
every $X \in Sm(k)$ with connected components $X_\alpha$ (i.e. $\alpha \in
X^{(0)}$) we have $F(X) = \prod_\alpha F(X_\alpha)$, (b) for every $U \subset X$
everywhere dense open ($X \in Sm(k)$) the pullback $F(X) \to F(U)$ is
injective, and finally (c) for every $X \in Sm(k)$ connected, the map $F(X) \to
\cap_{x \in X^{(1)}} F(\mathcal{O}_{X,x})$ is bijective \cite[Definition
2.1]{A1-alg-top}. Here the intersection is taken inside $F(k(X))$ (into which
each of the groups embeds by (b)). Note that we
are routinely being somewhat cavalier about the arguments of $F$; if $A$ is a
ring then we write $F(A) := F(Spec(A))$. Also if $X$ is a scheme and $x \in X$
is a point, we write $F(x) := F(k(x)) = F(Spec(k(x)))$, where $k(x)$ denotes the
residue field of $x$. This should always be unambiguous.

Write $\mathcal{F}_k$ for the category of field extensions of $k$ of finite
transcendence degree. Note that $\mathcal{F}_k^{op}$ is equivalent to a full
subcategory of the category of essentially smooth $k$-schemes. Hence any
presheaf of sets on $Sm(k)$ defines a continuous (covariant) functor
$F|_{\mathcal{F}_k}: \mathcal{F}_k \to Set$.

Suppose that $K \in \mathcal{F}_k$ and $\mathcal{O}_v \subset K$ is a discrete
valuation ring (dvr) with
residue field $\kappa(v)$. (We will always assume that $v|_k$ is trivial.) Then
we get a pullback $F(\mathcal{O}_v) \to F(K)$, as well as the specialisation $s:
F(\mathcal{O}_v) \to F(\kappa(v))$. If $F$ is invariant, then it is believable that this
data (for varying $K, \mathcal{O}_v$) almost determines $F$. One might then try
to define an unramified subpresheaf $I \subset F$ by specifying $I(K) \subset
F(K)$, putting $I(\mathcal{O}_v) = F(\mathcal{O}_v) \cap I(K)$, and trying to
restrict the specialisation maps. (If $F$ is
part of a homotopy module and $I$ is to be a homotopy sub-module, then one may
show that this definition of $I(\mathcal{O}_v)$ is the only one possible.) The
result below says that up to a compatibility condition, this strategy does
indeed work.

Recall the notion of an excellent ring \cite[Definition 2.35]{liu2002algebraic}.
Most rings which occur in
practice are excellent. Instead of considering all dvrs, it turns out that it
suffices to consider excellent drvs.

\begin{prop}[proof in Subsection \ref{subsec:constructing-strictly-inv}] \label{prop:constructing-I}
Let $M$ be a strictly invariant sheaf and $I: \mathcal{F}_k \to Ab$ a continuous
subfunctor of $M|_{\mathcal{F}_k}$. Suppose that for each $K \in \mathcal{F}_k$
and for each excellent dvr $\mathcal{O}_v \subset K$ we have that $s(I(K) \cap
M(\mathcal{O}_v)) \subset I(\kappa(v))$.

Then there exists a unique unramified sub-presheaf $I$ of $M$ extending $I:
\mathcal{F}_k \to Ab$ and with $I(\mathcal{O}_v) = I(K) \cap
M(\mathcal{O}_v)$ for each $K \in \mathcal{F}_k$ and dvr $\mathcal{O}_v \subset
K$. It is a Nisnevich sheaf and we have $I(X) = M(X) \cap I(X^{(0)})$, for all
$X \in Sm(k)$.
\end{prop}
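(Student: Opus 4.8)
The plan is to realise $I$ as an \emph{unramified sheaf} in the sense of Morel and to obtain it from $M$ by Morel's reconstruction procedure \cite[Chapter 2]{A1-alg-top}. Recall that a strictly invariant sheaf such as $M$ is unramified \cite[Lemma 6.4.4]{morel2005stable}, and is therefore completely determined by, and can be rebuilt from, the following datum: the restriction $M|_{\mathcal{F}_k}: \mathcal{F}_k \to Ab$; for each $K \in \mathcal{F}_k$ and each dvr $\mathcal{O}_v \subset K$ trivial on $k$, the subgroup $M(\mathcal{O}_v) \subseteq M(K)$; and for each such $v$, the specialisation homomorphism $s_v: M(\mathcal{O}_v) \to M(\kappa(v))$. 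Such a datum satisfies a list of axioms (functoriality along separable extensions and along towers of dvrs, a ``geometric'' axiom relating dvrs to codimension-one points of smooth varieties, and so on), and conversely any datum satisfying those axioms is the datum of a unique unramified sheaf $S$, recovered by $S(X) = \bigcap_{x \in X^{(1)}} S(\mathcal{O}_{X,x}) \subseteq S(k(X))$ for $X$ smooth irreducible and $S(\sqcup X_i) = \prod_i S(X_i)$ in general.

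First I would write down the candidate datum for $I$: the functor $I|_{\mathcal{F}_k}$ is the one we are given; set $I(\mathcal{O}_v) := I(K) \cap M(\mathcal{O}_v) \subseteq M(\mathcal{O}_v)$; and let the specialisation map be the restriction of $s_v$. The standing hypothesis $s_v(I(K) \cap M(\mathcal{O}_v)) \subseteq I(\kappa(v))$ is exactly the statement that this restricted $s_v$ lands in $I(\kappa(v))$, so the datum is well-formed. Next I would check Morel's axioms for it. They split into two kinds. The \emph{compatibility axioms} — that certain squares built from restriction and specialisation maps commute — hold for the $I$-datum simply by restricting the corresponding identities for $M$ to the subgroups $I(-)\subseteq M(-)$, using that $I|_{\mathcal{F}_k}$ is a subfunctor. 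The \emph{closure axioms} — that a specified element of some $S(K')$ lies in the subgroup $S(\mathcal{O}_w)$ — require, for $I$, both that the element lies in $M(\mathcal{O}_w)$ (the axiom for $M$) and that it lies in $I(K')$; the latter holds because in each case the element is the image, under a morphism of $\mathcal{F}_k$, of an element already known to lie in an $I$-group, and $I|_{\mathcal{F}_k}$ is a subfunctor. Granting the axioms, Morel's machine produces an unramified, hence Nisnevich, sheaf, which is our $I$; unwinding the reconstruction formula and using unramifiedness of $M$ gives $I(X) = M(X) \cap I(X^{(0)})$, and the inclusions $I(\mathcal{O}_v) \hookrightarrow M(\mathcal{O}_v)$ assemble into a monomorphism $I \hookrightarrow M$. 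Uniqueness is then immediate: any unramified sub-presheaf of $M$ with the prescribed values on fields and on the $\mathcal{O}_v$ is forced on every smooth $X$ by $I(X) = \bigcap_{x \in X^{(1)}} I(\mathcal{O}_{X,x})$.

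The step I expect to require the most care is the ``geometric'' compatibility axiom — the one allowing a dvr not of the form $\mathcal{O}_{X,x}$ to interact with codimension-one points — together with checking that specialisation for $I$ behaves correctly along \emph{chains} of dvrs. The resolution is to factor the specialisation $M(\mathcal{O}) \to M(\kappa)$ of a regular local ring $\mathcal{O}$ essentially smooth over $k$ (with residue field $\kappa$) through a chain of dvrs coming from a regular system of parameters, all of whose fraction fields again lie in $\mathcal{F}_k$, and to apply the hypothesis at each stage: one then has at step $j$ an element of $I(K_j) \cap M(\mathcal{O}_{v_{j+1}})$, whose image under $s_{v_{j+1}}$ lies in $I(K_{j+1})$, while the $M$-side of the assertion is handled by the specialisation theory already available for strictly invariant sheaves. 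With this in hand, functoriality of $I$ along an arbitrary $f: X \to Y$ — the last point needed to see that $I$ is genuinely a sub-presheaf — follows by factoring $M(Y) \to M(k(x))$, for $x \in X^{(0)}$ mapping to $\eta = f(x)$, through the specialisation of $M(\mathcal{O}_{Y,\eta})$.
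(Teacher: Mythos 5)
Your proposal is correct and takes essentially the same route as the paper: both invoke Morel's reconstruction theorem for unramified sheaves \cite[Theorem 2.11]{A1-alg-top}, feed it the datum $I(\mathcal{O}_v) = I(K) \cap M(\mathcal{O}_v)$ with specialisations restricted from $M$ (well-defined exactly by the hypothesis $s(I(K)\cap M(\mathcal{O}_v)) \subset I(\kappa(v))$), observe that the axioms are inherited from those of $M$ together with subfunctoriality of $I|_{\mathcal{F}_k}$, and deduce uniqueness and $I(X) = M(X) \cap I(X^{(0)})$ from unramifiedness. The only cosmetic difference is that the paper checks the Nisnevich sheaf property by a short explicit gluing argument (glue in $M$, then verify membership in $I$ at generic points via the cover), whereas you get it from Morel's machinery directly; your extra discussion of the chain-of-dvrs compatibilities is more detail than the paper records but consistent with its assertion that the axioms follow from those for $M$.
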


The difficulty with applying this result is in showing that $s(I(K) \cap
M(\mathcal{O}_v)) \subset I(\kappa(v))$, since $M(\mathcal{O}_v)$ is not usually
very accessible. If $M$ is part of a homotopy module $M_*$, then the situation
is somewhat better. Namely if $K \in \mathcal{F}_k$, $\mathcal{O}_v \subset K$
is a dvr and $\pi$ is a uniformizer of $\mathcal{O}_v$, then there exists a
\emph{boundary morphism} $\partial^\pi: M_*(K) \to M_{*-1}(\kappa(v))$.
We may use the boundary to extend the specialisation maps to all of $M_*(K)$.
Recall that $M_*(K)$ is a module over $K_*^{MW}(K)$ and that every element
$a \in K^\times$ defines elements $\langle a\rangle \in K_0^{MW}(K), [a] \in K_0^{MW}(K)$.

\begin{prop}[proof in Subsection \ref{subsec:interaction-boundary-special}]
\label{prop:s-partial-formulas-firstpart}
Let $\mathcal{O}_v \subset K$ be a dvr with uniformizer $\pi$,
$M_*$ a homotopy module. Write $s: M_*(\mathcal{O}_v) \to M_*(\kappa(v))$ for
the specialisation and $\partial^\pi: M_*(K) \to M_{*-1}(\kappa(v))$ for the
boundary. Put \[s^\pi: M_*(K) \to M_*(\kappa(v)), s^\pi(m) =
\langle -1 \rangle \partial^\pi([-\pi]m).\]
Then for $m \in M_*(\mathcal{O}_v) \subset M_*(K)$ we have $s^\pi(m) = s(m)$
\end{prop}

It is thus important to analyse the interaction of the boundary maps with
transfers.

\begin{lemm}[proof in Subsection \ref{subsec:interaction-boundary-transfer}] \label{lemm:KnMmtr-bdry}
Let $\mathcal{O}_v \subset K$ be an excellent dvr with uniformizer $\pi$.
Then $\partial^\pi (\ul K_n^{MW} M_m)^{tr}(K) \subset (\ul K_{n-1}^{MW}
M_m)^{tr}(\kappa(v)) \subset M_{n+m-1}(\kappa(v))$, for each $n>0$.
\end{lemm}

\begin{corr}
The (so far hypothetical) sheaf $(\ul K_n M_m)^{tr} \subset M_{n+m}$
exists and is unramified.
\end{corr}
\begin{proof}
We want to apply Proposition \ref{prop:constructing-I}. For this we need to show
that if $\mathcal{O}_v \subset K$ is an excellent dvr then $s((\ul K_n^{MW} M_m)^{tr}(K)
\cap M_{n+m}(\mathcal{O}_v)) \subset (\ul K_n^{MW} M_m)^{tr}(\kappa(v))$. Using
Proposition \ref{prop:s-partial-formulas-firstpart}
we know that $s(x) = s^\pi(x) =
\langle -1 \rangle \partial^\pi([-\pi] x)$ and so $s((\ul K_n^{MW} M_m)^{tr}(K)
\cap M_{n+m}(\mathcal{O}_v)) \subset \partial^\pi([-\pi] (\ul K_n^{MW}
M_m)^{tr}(K)) \subset 
\partial^\pi((\ul K_{n+1}^{MW} M_m)^{tr}(K)) \subset (\ul K_n^{MW}
M_m)^{tr}(\kappa(v))$
by the lemma. (Here we are using the projection formula in the form $[-\pi]
tr(x) = tr([-\pi] x)$, see Subsection
\ref{subsec:interaction-boundary-transfer}.)
\end{proof}

Next we need to prove that the unramified Nisnevich sheaf $(\ul K_n M_m)^{tr}
\subset M_{n+m}$ is strictly homotopy invariant, i.e. that $H^p(X \times \Aone,
(\ul K_n M_m)^{tr}) \to H^p(X, (\ul K_n M_m)^{tr})$ is an isomorphism for all
$n$ and all $X \in Sm(k)$. A deep theorem of Morel says that it is only
necessary to prove this for $p=0, 1$. This together with some diagram chasing
yields the following general criterion.

\begin{lemm}[proof in Subsection \ref{subsec:constructing-strictly-inv}] \label{lemm:invariant-I}
Let $M$ be a strictly invariant sheaf and $I$ a subsheaf such that $I(X) = M(X)
\cap I(X^{(0)})$. Then $I$ is strictly invariant if and only if for all $L \in
\mathcal{F}_k$ we have $H^1(\mathbb{A}^1_L, I) = 0$.
\end{lemm}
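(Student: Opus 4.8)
The forward implication is immediate: if $I$ is strictly $\Aone$-invariant then $H^1(\Aone_L, I) \cong H^1(\mathrm{Spec}\, L, I)$, which vanishes because $\mathrm{Spec}\, L$ is a point for the Nisnevich topology (writing $L \in \mathcal{F}_k$ as a filtered colimit of smooth models and using continuity of Nisnevich cohomology). So the content lies in the converse, and the plan is to verify that $I$ is \emph{strongly} $\Aone$-invariant — i.e.\ that $H^0(-, I)$ and $H^1(-, I)$ are $\Aone$-invariant — and then to invoke Morel's theorem that a strongly $\Aone$-invariant sheaf of abelian groups over a perfect field is strictly $\Aone$-invariant \cite{A1-alg-top}.

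Before that I would record the structural consequences of the hypotheses. Since $I(X) = M(X) \cap I(X^{(0)})$, the sheaf $I$ is unramified, and so is each contraction $I_{-p} = \Omega^p I \subseteq M_{-p}$; moreover the residue and specialisation homomorphisms appearing in Morel's Rost--Schmid complex $C^\bullet(X; M)$ restrict to the corresponding $I$-data (this is the compatibility underlying Proposition \ref{prop:constructing-I}). Thus $C^\bullet(X; I) \subseteq C^\bullet(X; M)$ is a subcomplex with $C^p(X; I) = \bigoplus_{x \in X^{(p)}} I_{-p}(k(x))$ (appropriately twisted) and $H^0 = I(X)$. The $\Aone$-invariance of $H^0$ is then easy: by unramifiedness of $I$ and $\Aone$-invariance of $M$, the equality $I(U) = I(U\times\Aone)$ reduces to $I(F(t)) \cap M(F) = I(F)$ inside $M(F(t))$ for every $F \in \mathcal{F}_k$, and the nonobvious inclusion follows by specialising at $t = 0$, a homomorphism which is the identity on $M(F) \subseteq M(F[t]_{(t)})$ and which carries $I$ into $I$.

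The $\Aone$-invariance of $H^1$ is the crux, and is where the hypothesis is used. Writing $\pi\colon X\times\Aone \to X$, it suffices to show $R^1\pi_* I = 0$; since the Nisnevich stalk of $R^1\pi_* I$ at $x \in X$ identifies with $H^1(\Aone_R, I)$ for $R = \mathcal{O}_{X,x}^h$, this amounts to proving $H^1(\Aone_R, I) = 0$ for all essentially smooth henselian local $k$-algebras $R$. I would argue by induction on $\dim R$: when $\dim R = 0$, $R$ is a field $L$, so $H^q(\Aone_L, I) = 0$ for $q \ge 2$ by the dimension bound and $H^1(\Aone_L, I) = 0$ by hypothesis; for $\dim R > 0$ one runs a dévissage along the coniveau filtration of $\mathrm{Spec}\, R$ (equivalently a localisation sequence relating $\Aone_R$, $\Aone$ over the closed point, and $\Aone$ over the punctured spectrum), patterned on Morel's analysis, reducing the vanishing to lower-dimensional cases.

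Equivalently, one can phrase the whole converse through the complex $C^\bullet(-; I)$: one shows it is a Gersten--Rost--Schmid resolution of $I$ (hence computes $H^*_{Nis}(-, I)$), after which strict invariance follows from the standard homotopy argument comparing the complexes of $X\times\Aone$ and of $X$ via the zero section, whose only field-level input is the acyclicity of $C^\bullet(\Aone_L; I)$ in positive degrees — and since $\dim\Aone_L = 1$ this is exactly $H^1(\Aone_L, I) = 0$. Either way the main obstacle is the same: establishing the Gersten-type resolution, equivalently the henselian-local vanishing, for $I$. I expect this to come out by imitating Morel's dévissage for $M$ and checking at each step that the constructions remain inside $I$, using unramifiedness of the $I_{-p}$ together with the defining relation $I(X) = M(X) \cap I(X^{(0)})$.
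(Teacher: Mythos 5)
Your reduction to strong invariance plus Morel's theorem is the right frame (that is exactly what the paper's citation of \cite[Theorem 5.46]{A1-alg-top} accomplishes), and you correctly identify the crux: $\Aone$-invariance of $H^1(-,I)$, i.e.\ vanishing of $H^1(\Aone_R,I)$ for essentially smooth henselian local $R$. But at that point the proposal stops proving and starts hoping. The inductive ``d\'evissage along the coniveau filtration, patterned on Morel's analysis'' has no mechanism: the localisation/coniveau sequences you would run on $\Aone_R$ require control of cohomology with supports of $I$, i.e.\ purity or a Gersten--Rost--Schmid resolution for $I$ — which is essentially the strict invariance you are trying to prove, so the plan is circular (you concede as much in the last sentence). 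Worse, the structural claims you lean on are not consequences of the lemma's hypotheses: the statement only assumes $I \subset M$ is a subsheaf with $I(X) = M(X) \cap I(X^{(0)})$; nothing guarantees that the residue maps of Morel's complex for $M$ carry $I$-sections into (twisted) sections of $\Omega I$, so the subcomplex $C^\bullet(-;I)$ you invoke need not exist at this level of generality. (In the paper's application that closure is a separate, nontrivial fact, Lemma \ref{lemm:KnMmtr-bdry}, proved \emph{after} this lemma.)

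The missing idea is to work with the quotient sheaf $C = M/I$ rather than with $I$ itself, so that the strict invariance of $M$ does all the heavy lifting. Comparing the five-term exact sequences $H^0(I) \to H^0(M) \to H^0(C) \to H^1(I) \to H^1(M)$ for $\Aone \times X$ and for $X$ (via the zero section), and using that the maps on $H^0(I)$, $H^0(M)$, $H^1(M)$ are isomorphisms, a diagram chase shows that $H^1(-,I)$ is invariant if and only if $C$ is invariant. The hypothesis $I(X) = M(X) \cap I(X^{(0)})$ enters exactly once, to show $C(X) \hookrightarrow \bigoplus_{F \to X} C(F)$ for henselian local (hence all) $X$; this reduces invariance of $C$ to injectivity of $C(F \times \Aone) \to C(F)$ for $F \in \mathcal{F}_k$, and over a field the same five-term diagram identifies the failure of that injectivity with $H^1(\Aone_F, I)$. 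Thus the field-level hypothesis closes the argument with no d\'evissage, no henselian-local induction, and no Rost--Schmid complex for $I$. If you want to salvage your $R^1\pi_*$ formulation, note that for henselian local $R$ one has $H^1(\Aone_R, I) = \operatorname{coker}\left(C(R) \to C(\Aone_R)\right)$ by the same exact sequence, so the needed vanishing is again precisely invariance of $C$ — i.e.\ you are led back to the quotient-sheaf argument, not to an induction on $\dim R$.
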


In order to use this, we need to get down to the nitty gritty details of
constructing an actual resolution of $(\ul K_n M_m)^{tr}$ on $\Aone_L$. In order
to motivate this, recall that any homotopy module $M_*$ has a \emph{Rost-Schmid}
resolution \cite[Chapter 5]{A1-alg-top}. This is essentially built out of the boundary maps
$\partial^\pi$. Unfortunately these depend on the choice of uniformizer, which
is very inconvenient. It is possible to remove this problem by \emph{twisting}.
Let us discuss this technique first.

Recall that if $M$ is a
sheaf with a $\ZZ \Gm$-module structure and $\mathcal{L}$ is a line bundle on
$X$, then one writes $M(\mathcal{L})$ for the sheaf tensor product $M|_X
\otimes_{\ZZ \Gm}
\ZZ[\mathcal{L}^\times]$, where $\ZZ[\mathcal{L}^\times](U)$ is the free abelian
group on the trivialisations of $\mathcal{L}|_U$. The sections are denoted $M(U, \mathcal{L}) :=
M(\mathcal{L})(U)$. If $\mathcal{L}$ is trivial on the affine $U$, then $M(U, \mathcal{L}) =
M(U) \times_{\mathcal{O}^\times(U)} \mathcal{L}(U)^\times$, where
$\mathcal{L}(U)^\times$ is the set of generators of the free rank one
$\mathcal{O}(U)$-module $\mathcal{L}(U)$.
Thus $M(U, \mathcal{L})$ is isomorphic
to $M(U)$ for such $U$, but not canonically so.

If $M_*$ is a homotopy module then each $M_m$ is a $\ZZ \Gm$-module, via the
natural map $\ZZ \Gm \to \ul K_0^{MW}, a \mapsto \langle a \rangle$. This is the
only $\ZZ \Gm$-module structure we shall use for twisting. It follows that
$M_*(\mathcal{L}' \otimes \mathcal{L}^{\otimes 2}) \wequi M_*(\mathcal{L}')$,
canonically.

If $x \in X$, we put $\Lambda_x^X = \Lambda^{max}(m_x/m_x^2)$, where $m_x
\subset \mathcal{O}_{X, x}$ is the defining ideal and $\Lambda^{max}$ denotes
the maximal non-zero exterior power. This is a one-dimensional
$k(x)$-vector space.

Suppose that $X \in Sm(k)$ is connected and $x \in X^{(1)}$. Choose a
uniformizer $\pi$ of the dvr $\mathcal{O}_x$ (which we view as an element of
$m_x$) and define a homomorphism
\[ \partial_x: M_*(k(X)) \to M_{*-1}(k(x), \Lambda_x^X),
   m \mapsto m \otimes \pi. \]
It turns out that this is independent of the choice of $\pi$ \cite[Lemma 5.10]{A1-alg-top}. The
Rost-Schmid complex then starts as
\[ M_*(X) \to M_*(k(X)) \xrightarrow{\oplus \partial_x} \bigoplus_{x \in
      X^{(1)}} M_{*-1}(\kappa(x), \Lambda_x^X) \to \dots. \]
The next term involves $M_{*-2}(x)$ with $x \in X^{(2)}$ and so if $X$ has
dimension one, then the complex has to stop. It is thus plausible to try to
resolve $(\ul K_n^{MW} M_m)$ as follows:

\begin{equation} \label{eq:RS-complex}
0 \to (\ul K_n^{MW} M_m)^{tr}(X) \to (\ul K_n^{MW} M_m)^{tr} (X^{(0)})
    \xrightarrow{\partial}
    \bigoplus_{x \in X^{(1)}} (\ul K_{n-1}^{MW} M_m)^{tr}(k(x), \Lambda_x^X) \to 0.
\end{equation}

Certainly the first map is injective because $(\ul K_n^{MW} M_m)^{tr}$ is
unramified.
The boundary map $\partial$ is the same as for $M_{m+n}$.This makes sense
(i.e. the map lands in the group on the right) by Lemma \ref{lemm:KnMmtr-bdry}.
Since $M_*$ is a homotopy module its Rost-Schmid complex is a resolution.
Consequently in the complex \eqref{eq:RS-complex} the kernel of $\partial$ is
$M_{m+n}(X) \cap (K_n^{MW} M_m)^{tr} (X^{(0)})$. But by construction, i.e. the
last part of Proposition \ref{prop:constructing-I}, this is the same as $(\ul
K_n^{MW} M_m)^{tr}(X)$.

Note that the various complexes \eqref{eq:RS-complex} assemble to a complex of
presheaves on
$X_{Nis}$. Moreover, the  two resolving presheaves on the right are
acyclic Nisnevich sheaves \cite[Lemma 5.42]{A1-alg-top}. Thus the following
lemma will show that the Rost-Schmid complex is an acyclic resolution:

\begin{lemm}[proof in Subsection \ref{subsec:interaction-boundary-transfer}]
\label{lemm:RS-resn}
Let $X$ be the Henselization of a smooth variety over $k$ in a point of
codimension one. Then the differential
\[ (\ul K_n^{MW} M_m)^{tr} (X^{(0)})
    \xrightarrow{\partial}
    \bigoplus_{x \in X^{(1)}} (\ul K_{n-1}^{MW} M_m)^{tr}(k(x), \Lambda_x^X) \]
is surjective.
\end{lemm}

The next step is then to use this to show that $H^1(\Aone_K, (\ul K_n^{MW} M_m)^{tr})
= 0$. This is the content of the following lemma:

\begin{lemm}[proof in Subsection \ref{subsec:interaction-boundary-transfer}]
 \label{lemm:A1-diff-surjective}
Let $K \in \mathcal{F}_k$.
The differential
\[ (\ul K_n^{MW} M_m)^{tr}(K(T)) \to \bigoplus_{x \in
(\Aone_K)^{(1)}} (\ul K_{n-1}^{MW} M_m)^{tr}(k(x), \Lambda_x^{\Aone})\] is surjective.
\end{lemm}

We have thus established that $(\ul K_n^{MW} M_m)^{tr} \subset M_{m+n}$ is a
strictly homotopy invariant subsheaf. It remains to show that we have
constructed a homotopy module, i.e. that $\Omega (\ul K_n^{MW} M_m)^{tr} = (\ul
K_{n-1}^{MW} M_m)^{tr}$. Note that both sides are canonically subsheaves of
$M_{m+n-1}$ so this makes sense. This is essentially formal:

\begin{corr}
We have $\Omega (\ul K_n^{MW} M_m)^{tr} = (\ul K_{n-1}^{MW} M_m)^{tr}$.
\end{corr}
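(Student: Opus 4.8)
The plan is to realise both $\Omega(\ul K_n^{MW} M_m)^{tr}$ and $(\ul K_{n-1}^{MW} M_m)^{tr}$ as strictly invariant subsheaves of $M_{n+m-1}$ and then compare their sections over fields. Since $\Omega = \iHom(\ZZ\Gm, \bullet)$ is a right adjoint it is left exact, and it preserves $\HI(k)$; applying it to the inclusion $(\ul K_n^{MW} M_m)^{tr} \hookrightarrow M_{n+m}$ exhibits $\Omega(\ul K_n^{MW} M_m)^{tr}$ as a strictly invariant subsheaf of $\Omega M_{n+m} = M_{n+m-1}$. As $(\ul K_{n-1}^{MW} M_m)^{tr}$ is also a strictly invariant subsheaf of $M_{n+m-1}$, by the unramifiedness discussion of Section \ref{sec:homotopy-modules} it suffices to check that the two have the same sections over every $L \in \mathcal{F}_k$.

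Next I would make the $L$-sections of $\Omega F$ explicit for a strictly invariant $F$. Using the localisation sequence for $\{0\} \hookrightarrow \Aone_L \hookleftarrow \Gm_L$ together with $\Aone$-invariance (so $F(\Aone_L) = F(L)$ and $H^1(\Aone_L, F) = 0$), one gets $H^1_{\{0\}}(\Aone_L, F) = F(\Gm_L)/F(L) = \Omega F(L)$, and trivialising the twist by the coordinate $T$ the residue $\partial_0^T$ at the origin becomes a surjection $F(\Gm_L) \twoheadrightarrow \Omega F(L)$ with kernel $F(L)$. For $F = (\ul K_n^{MW} M_m)^{tr} \subseteq M_{n+m}$ this is compatible with the corresponding statement for $M_{n+m}$ (the induced map on $H^1_{\{0\}}$ is injective because $F(\Gm_L) \cap M_{n+m}(\Aone_L) = F(\Aone_L)$), so that $\Omega(\ul K_n^{MW} M_m)^{tr}(L) = \partial_0^T\big((\ul K_n^{MW} M_m)^{tr}(\Gm_L)\big) \subseteq M_{n+m-1}(L)$. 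Moreover, by the description of the sheaf in Proposition \ref{prop:constructing-I}, $(\ul K_n^{MW} M_m)^{tr}(\Gm_L)$ is exactly the set of $\beta \in (\ul K_n^{MW} M_m)^{tr}(L(T))$ with $\partial_x\beta = 0$ for all $x \in \Gm_L^{(1)}$.

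The inclusion $\Omega(\ul K_n^{MW} M_m)^{tr}(L) \subseteq (\ul K_{n-1}^{MW} M_m)^{tr}(L)$ is then immediate from Lemma \ref{lemm:KnMmtr-bdry} applied to the dvr $\mathcal{O}_{\Aone_L, 0}$ with uniformiser $T$. For the reverse inclusion I would invoke the surjectivity lemma just proven with $x_0 = 0 \in \Aone_L^{(1)}$: given $\alpha \in (\ul K_{n-1}^{MW} M_m)^{tr}(L)$, viewed via $T$ as an element of $(\ul K_{n-1}^{MW} M_m)^{tr}(\kappa(x_0), \Lambda_{x_0}^{\Aone})$, it produces $\beta \in (\ul K_n^{MW} M_m)(L(T)) \subseteq (\ul K_n^{MW} M_m)^{tr}(L(T))$ with $\partial_0\beta = \alpha$ and $\partial_y\beta = 0$ for $y \neq 0$; the latter vanishing places $\beta$ in $(\ul K_n^{MW} M_m)^{tr}(\Gm_L)$, so $\alpha = \partial_0^T\beta$ lies in $\Omega(\ul K_n^{MW} M_m)^{tr}(L)$.

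I expect the main obstacle to be the bookkeeping in the second paragraph: pinning down the isomorphism $F(\Gm_L)/F(L) \cong \Omega F(L)$ (including the role of the line-bundle twist $\Lambda_0^{\Aone}$ and of the chosen coordinate $T$) and verifying it is compatible with the inclusion into $M_{n+m}$, together with keeping track of the degenerate convention $(\ul K_0^{MW} M_m)^{tr} = M_m$ when $n = 1$. Everything else is a formal consequence of Lemma \ref{lemm:KnMmtr-bdry} and the surjectivity lemma established immediately before the statement.
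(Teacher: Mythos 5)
Your argument is correct and essentially the paper's own proof: identify the field sections of $\Omega(\ul K_n^{MW} M_m)^{tr}$ with the image of the boundary $\partial_0^T$ at $0 \in \Aone_L$, get one inclusion from Lemma \ref{lemm:KnMmtr-bdry} and the other from the surjectivity lemma, and conclude since two strictly invariant subsheaves of $M_{n+m-1}$ with the same sections over all $L \in \mathcal{F}_k$ coincide. The only (harmless) difference is that you apply $\partial_0^T$ to $(\ul K_n^{MW} M_m)^{tr}(\Gm_L)$ while the paper applies it to the full $(\ul K_n^{MW} M_m)^{tr}(L(T))$; strict invariance ($H^1(\Aone_L,\cdot)=0$) makes these images agree, so the extra bookkeeping you flag is fine but not needed.
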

\begin{proof}
If $F$ is any strictly homotopy invariant Nisnevich sheaf, then in \cite[Chapter
5]{A1-alg-top} there is constructed for any $X$ essentially $k$-smooth the
Rost-Schmid complex (resolution)
\[ 0 \to F(X) \to F(X^{(0)}) \to \bigoplus_{x \in X^{(1)}} (\Omega F)(x) \to \dots. \]
Applying this to $X = \Aone_K$ and using strict homotopy invariance shows that
$(\Omega F)(K) = \partial_0^T(F(K(T)))$, where $\partial_0^T$ is the boundary
map corresponding to the point $0 \in \Aone_K$.

This construction is functorial in $F$. Given a subsheaf $F \to G$, we obtain an injection
$\Omega F \to \Omega G$, and we find that $(\Omega F)(K) = \partial_0^T(F(K(T)))
\subset (\Omega G)(K)$, where $\partial_0^T$ can mean either the boundary in the
Rost-Schmid complex of $F$ or $G$, because they are compatible.

Thus if $K \in \mathcal{F}_k$ then $\Omega (\ul K_n^{MW} M_m)^{tr}(K)
  = \partial_0^T((\ul K_n^{MW} M_m)^{tr}(K(T))) = (\ul K_{n-1}^{MW}
M_m)^{tr}(K)$.
Indeed the first equality holds for every strictly invariant sheaf, as we just
explained, and the second holds by Lemma \ref{lemm:A1-diff-surjective}.

Thus $\Omega (\ul K_n^{MW} M_m)^{tr}$ and $(\ul
K_{n-1}^{MW} M_m)^{tr}$ are two strictly invariant subsheaves of $M_{m+n-1}$
with the same sections on fields and hence equal. (If $F$ is strictly invariant
so is $\Omega F$ \cite[Lemma 2.32 and Theorem 5.46]{A1-alg-top}.)
\end{proof}

All parts of Theorem \ref{thm:slices-by-hand} are now proved (modulo the
postponed technical points).

\subsection{An Application}
We cannot resist
giving the following application. It is not used in the rest of the text.

\begin{corr}[compare \cite{rigidity-in-motivic-homotopy-theory}]
\label{corr:rigidity}
Let $M_*$ be a homotopy module which is torsion of exponent prime
to the characteristic of $k$, and $L/K$ an extension of algebraically closed
fields. Then $M_*(L) = M_*(K)$.
\end{corr}
\begin{proof}
We shall call any sheaf $F$ with $F(L) = F(K)$ rigid. Rigid sheaves are stable
under extensions, limits and colimits.

Let $K_n = ker(\eta^n: M_{*+n} \to M_*)$. Then there are exact sequences $0 \to
K_n \to K_{n+1} \to K_{n+1}/K_n \to 0$. Since $\eta$ acts trivially on $K_{n+1}/K_n$,
this homotopy module has transfers by Theorem
\ref{thm:deglise} and so is rigid \cite[Theorem 7.20]{lecture-notes-mot-cohom}.
Also $\eta$ acts trivially on $K_1$, so this
module is rigid by the same argument. By induction we find that $K_n$ is rigid
for all $n$ (recall that rigid modules are stable under extensions), and hence
$K_\infty := \colim_n K_n$ is rigid. It is thus sufficient to show that
$M/K_\infty$ is rigid. We may thus assume that $\eta: M_{*+1} \to M_*$ is
\emph{injective}.

Next consider the exact sequences $0 \to K^{(n)} :=
\eta^n(M_{*+n})/\eta^{n+1}(M_{*+n+1}) \to M_*/\eta^{n+1}M_{*+n+1} \to
M_*/\eta^n M_{*+n} \to 0$. By induction on $n$ (starting at $n=0$) using the
fact that $\eta$ acts trivially on $K^{(n)}$ and so $K^{(n)}$ is rigid as
before, one finds that $M_*/\eta^n M_{*+n}$ is rigid for all $n$. Let
$\eta^\infty(M)_* = \cap_n \eta^n M_{*+n}$. It follows that
$M_*/\eta^\infty(M)_*$ is rigid, being an inverse limit of rigid sheaves. It
remains to show that $\eta^\infty(M)$ is rigid.

Now $\eta$ was assumed injective on $M$. From this it follows that $\eta$ is an
isomorphism on $\eta^\infty(M)$. So in particular from now on we may assume that $\eta$ is
\emph{surjective} on $M$.

Consider $\tilde{F}_1M/\tilde{F}_0M =: M'$. It is straightforward to check that $\eta$
acts trivially on this module, so it is rigid. On the other hand
$(\tilde{F}_1M)_1(K) = M_1(K)$ whereas $(\tilde{F}_0M)_1(K) = K_1^{MW}(K)M_0(K)
= K_1^{MW}(K)\eta M_1(K) = I(K) M_1(K) = 0$, since $K$ is algebraically closed.
Thus $M'(K)_1 = M(K)_1$ and similarly for $L$, so we conclude that $M_1$ is
rigid. The general result follows by replacing $M_*$ by $M_{*+n}$.
\end{proof}

\subsection{Constructing Strictly Invariant Subsheaves: Proofs of Proposition
   \ref{prop:constructing-I} and Lemma \ref{lemm:invariant-I}}
\label{subsec:constructing-strictly-inv}

\begin{prop*}[\ref{prop:constructing-I}]
Let $M$ be a strictly invariant sheaf and $I: \mathcal{F}_k \to Ab$ a continuous
subfunctor of $M|_{\mathcal{F}_k}$. Suppose that for each $K \in \mathcal{F}_k$
and for each excellent dvr $\mathcal{O}_v \subset K$ we have that $s(I(K) \cap
M(\mathcal{O}_v)) \subset I(\kappa(v))$.

There exists a unique unramified sub-presheaf $I$ of $M$ extending $I:
\mathcal{F}_k \to Ab$ and with $I(\mathcal{O}_v) = I(K) \cap
M(\mathcal{O}_v)$ for each $K \in \mathcal{F}_k$ and excellent dvr $\mathcal{O}_v \subset
K$. It is a Nisnevich sheaf and we have $I(X) = M(X) \cap I(X^{(0)})$.
\end{prop*}
\begin{proof}[Proof of Proposition \ref{prop:constructing-I}.]
We use \cite[Theorem 2.11]{A1-alg-top}. This says that the functor $I:
\mathcal{F}_k \to Ab$ together with the data $I(\mathcal{O}_v)$ and the
specialisations inherited from $M$ extends to a (unique) unramified presheaf,
provided the data satisfies certain axioms called (A1) to (A4). In fact the
theorem establishes an equivalence of categories, so $I$ will be a subpresheaf
of $M$. Also while it is not stated in the source, the proof of the theorem only
ever uses the value of $I(\mathcal{O}_v)$ on geometrically constructed dvrs,
i.e. localisations of finitely generated rings over a field, and such rings are
excellent \cite[Theorem 2.39(a, c)]{liu2002algebraic}. Hence we only need the data $I(\mathcal{O}_v)$ for
excellent dvrs.

It is easy to check that the axioms (A1) to (A4) for
$I$ are all direct consequences of the axioms for $M$ and our definitions, so
we know that $I$ exists as a presheaf, and that it has the correct sections over
fields and over dvrs.

The formula $I(X) = I(X^{(0)}) \cap M(X)$ for all $X$ follows from the formula
being true for $X = Spec(\mathcal{O}_v)$, because $I$ is unramified. Indeed we
compute for $X$ connected
\begin{gather*}
  I(X) = \bigcap_{x \in X^{(1)}} I(\mathcal{O}_{X,x})
       = \bigcap_{x \in X^{(1)}} (I(k(X)) \cap M(\mathcal{O}_{X,x})) \\
        = I(k(X)) \cap \bigcap_{x \in X^{(1)}} M(\mathcal{O}_{X,x})
        = I(k(X)) \cap M(X).
\end{gather*}
(The first equality is because $I$ is unramified, the second is by definition,
and the fourth is because $M$ is unramified.) The case of non-connected $X$
follows directly from this because for an unramified presheaf $F$ we have $F(X
\coprod Y) = F(X) \oplus F(Y)$, by definition.

To prove that $I$ is a sheaf, let $\{U_\alpha\}_\alpha$ be a Nisnevich cover of
$X$ and $\{i_\alpha\}_\alpha$ a compatible family, $i_\alpha \in I(U_\alpha)$.
Then there exists a unique
``glued'' section $i \in M(X)$ with $i|_{U_\alpha} = i_\alpha$. We need to show that
$i \in I(X)$, i.e. that $i \in I(X^{(0)})$. Let $\xi \in X^{(0)}$. Since $\{U_\alpha\}_\alpha$ is a
Nisnevich cover there exists $\alpha_\xi$ such that $\xi \in U_{\alpha_\xi}$. It
then follows that $i|_\xi = (i|_{U_\xi})|_\xi = i_{\alpha_\xi}|_\xi \in I(\xi)$.
This concludes the proof.
\end{proof}

\begin{lemm*}[\ref{lemm:invariant-I}]
Let $M$ be a strictly invariant sheaf and $I$ a subsheaf such that $I(X) = M(X)
\cap I(X^{(0)})$ for all $X$.
Then $I$ is strictly invariant if and only if for all $L \in
\mathcal{F}_k$ we have $H^1(\mathbb{A}^1_L, I) = 0$.
\end{lemm*}
\begin{proof}[Proof of Lemma \ref{lemm:invariant-I}.]
We note that an arbitrary presheaf $F$ is $\Aone$-invariant if and only if for each $X$,
the map $F(X \times \Aone) \to F(X)$ induced by the zero section $X \to X \times
\Aone$ is injective. (Pullback along the zero section shows that $F(X)$
is a retract of $F(X \times \Aone)$.)
In particular $I$ is invariant, being a subsheaf of an
invariant sheaf.

By \cite[Theorem 5.46]{A1-alg-top} we then know that $I$ is strictly invariant
if and only if $H^1(\bullet, I)$ is invariant. Let $C=M/I$ be the Nisnevich
quotient sheaf and consider the following commutative diagram with exact rows
induced by the zero section $X \to \Aone \times X$.
\begin{figure*}[h]
\centering
\begin{tikzcd}[column sep=tiny]
H^0(\Aone \times X, I) \arrow{r} \arrow{d}{\wequi} & H^0(\Aone \times X, M) \arrow{r} \arrow{d}{\wequi}
  & H^0(\Aone \times X, C) \arrow{r} \arrow{d}{\alpha}
  & H^1(\Aone \times X, I) \arrow{r} \arrow{d}{\beta} & H^1(\Aone \times X, M) \arrow{d}{\wequi} \\
H^0(X, I) \arrow{r} & H^0(X, M) \arrow{r} & H^0(X, C) \arrow{r} & H^1(X, I) \arrow{r} & H^1(X, M)
\end{tikzcd}
\end{figure*}
A chase reveals that $\alpha$ is injective if and only if $\beta$ is injective.
Hence $I$ is strictly invariant if and only if $C$ is invariant.

Now let $X$ be essentially smooth over $k$ and Nisnevich local (i.e. the
spectrum of a Henselian ring) with fraction field $K$, and consider the
following commutative diagram with exact rows.
\begin{equation*}
\begin{CD}
  I(X) @>>> M(X) @>>> C(X) @>>> 0 \\
 @VVV       @VVV      @VV{\gamma}V \\
  I(K) @>>> M(K) @>>> C(K) @>>> 0
\end{CD}
\end{equation*}
Since the left square is Cartesian by the assumption that $I(X) = M(X) \cap
I(K)$, the morphism $\gamma$
is injective. Now let $X$ be arbitrary. Since $C$ is a Nisnevich sheaf, we
conclude that the composite $C(X) \to \bigoplus_{x \in X}C(\mathcal{O}_{X,x}^h)
\to \bigoplus_{x \in X}C(Frac(\mathcal{O}_{X,x}^h))$ is injective, and hence so
is $C(X) \to \bigoplus_{F \to X} C(F)$. Here the sum is over all fields $F \in
\mathcal{F}_k$ and all $F$-points of $X$. Now consider the following commutative
diagram.
\begin{equation*}
\begin{CD}
C(X \times \Aone) @>{\alpha}>> \bigoplus_{F \to X}C(F \times \Aone) @>>> \bigoplus_{F \to X \times \Aone} C(F) \\
@VVV                             @VVV \\
C(X) @>>> \bigoplus_{F \to X}C(F)
\end{CD}
\end{equation*}
The top right map comes from the fact that $\Hom(Spec(F), X \times \Aone) =
\Hom(Spec(F), X) \times \Hom(Spec(F), \Aone)$. By what we just said the top
composite is injective and hence so is $\alpha$. Thus the left vertical map is
injective if the right one is. That is, $C$ is invariant if (and only if) $C(F \times \Aone)
\to C(F)$ is injective for all $F \in \mathcal{F}_k$, which (going back to the
first diagram) is the same as $H^1(F \times \Aone, I) = 0$. This concludes the
proof.
\end{proof}

\subsection{Interaction of Boundary and Specialisation: Proof of Proposition
  \ref{prop:s-partial-formulas-firstpart}}
\label{subsec:interaction-boundary-special}

We will actually prove the following more general statement, which we need later
anyway. Recall that for $a \in K^\times$ there are elements $\langle a \rangle
\in K_0^{MW}(K), [a] \in
K_1^{MW}(K)$ \cite[Section 3.1]{A1-alg-top}.

\begin{prop} \label{prop:s-partial-formulas}
Let $\mathcal{O}_v \subset K$ be a dvr with uniformizer $\pi$,
$M_*$ a homotopy module. Write $s: M_*(\mathcal{O}_v) \to M_*(\kappa(v))$ for
the specialisation and $\partial^\pi: M_*(K) \to M_{*-1}(\kappa(v))$ for the
boundary. Put \[s^\pi: M_*(K) \to M_*(\kappa(v)), s^\pi(m) =
\langle -1 \rangle \partial^\pi([-\pi]m).\] Then the following hold.
\begin{enumerate}[(i)]
\item For $m \in M_*(\mathcal{O}_v)$ we have $s^\pi(m) = s(m)$.
\item For $\alpha \in \ul K_*^{MW}(K), m \in M_*(K)$ we have
      $s^\pi(\alpha m) = s^\pi(\alpha) s^\pi(m)$.
\item For $\alpha \in \ul K_*^{MW}(K), m \in M_*(K)$ we have
      \[ \partial^\pi(\alpha m) = \partial^\pi(\alpha) s^\pi(m) + s^\pi(\alpha)
         \partial^\pi(m) + \partial^\pi(\alpha) [-1] \partial^\pi(m). \]
\end{enumerate}
\end{prop}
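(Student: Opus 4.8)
The plan is to reduce everything to Milnor--Witt K-theory and then invoke Morel's calculus of residues and specializations from \cite[Chapter 3]{A1-alg-top}. The key structural point is that each formula is a statement about sections over fields (with a dvr), so by unramifiedness it suffices to verify identities in $M_*(\kappa(v))$ and $M_{*-1}(\kappa(v))$ directly; no sheaf-theoretic subtleties intervene. First I would recall the module structure: $M_*$ is a module over $\ul K_*^{MW}$, and the boundary $\partial^\pi$ together with the specialization $s$ on the subring $\ul K_*^{MW}(\mathcal{O}_v)$ are the restrictions to $M_*$ of the corresponding operators on $\ul K_*^{MW}$ itself, which are defined via contraction by $[\pi]$ followed by evaluation at the residue field. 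The element $\langle -1 \rangle$ appearing in the definition of $s^\pi$ is exactly the unit $\epsilon$-twist that makes the graded Leibniz signs work out.

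For part (i), the plan is: for $m \in M_*(\mathcal{O}_v)$ one has $[-\pi]m = ([\pi] + [-1])m$ in $M_*(K)$ modulo the relation $[\pi] + [-\pi] = [\pi][-\pi]\eta + \dots$; more cleanly, apply the known Milnor--Witt identity $[-\pi] = [-1] + \langle -1\rangle[\pi]$ (valid since $[ab] = [a] + [b] + \eta[a][b]$ and $[\text{unit}]$ absorbs the $\eta$-term appropriately), so $\partial^\pi([-\pi]m) = \partial^\pi([-1]m) + \langle -1\rangle\partial^\pi([\pi]m)$. Since $[-1]$ is defined over $\mathcal{O}_v$, the Leibniz rule gives $\partial^\pi([-1]m) = \partial^\pi([-1])s(m) + s([-1])\partial^\pi(m) + (\text{error})$, and $\partial^\pi$ of a unit class vanishes while $\partial^\pi([\pi]m) = \langle\text{something}\rangle s(m)$ by the normalization $\partial^\pi([\pi]) = 1$. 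Collecting terms and using $\langle -1\rangle^2 = 1$ yields $s^\pi(m) = s(m)$. I would extract the precise computation from \cite[Lemma 3.16 and Lemma 3.17]{A1-alg-top} or the analogous statements.

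For (ii) and (iii), the strategy is to establish (iii) first as the master identity --- it is the graded Leibniz rule for $\partial^\pi$ on the module $M_*$ --- and then deduce (ii) as the statement that the ``defect'' term, after twisting by $[-\pi]$ and $\langle -1\rangle$, is multiplicative. Concretely: $\partial^\pi$ on the ring $\ul K_*^{MW}$ satisfies $\partial^\pi(\alpha\beta) = \partial^\pi(\alpha)\bar\beta + \overline{\bar\alpha}\partial^\pi(\beta)$ in the appropriate $\epsilon$-graded sense, where $\bar{\cdot}$ denotes the ``residue'' (which is $s^\pi$); iterating the definition of $s^\pi(m) = \langle-1\rangle\partial^\pi([-\pi]m)$ and applying the ring-level Leibniz rule to $[-\pi]\cdot\alpha\cdot m$ produces exactly the three-term formula in (iii) (the third term $\partial^\pi(\alpha)[-1]\partial^\pi(m)$ is the characteristic Milnor--Witt correction coming from $\eta[-1] = \langle-1\rangle - 1$). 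Then (ii) follows by substituting $\alpha m$ for $m$ in the definition of $s^\pi$ and using (iii) together with the already-proved (i) to kill the boundary terms that land in the image of a unit.

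The main obstacle I anticipate is bookkeeping the $\epsilon = -\langle -1\rangle$ signs and the $\eta[-1]$-corrections consistently: Milnor--Witt K-theory is only graded-commutative up to the $\langle -1\rangle$-twist, so the Leibniz rule for $\partial^\pi$ carries twists that must be tracked carefully, and the term $\partial^\pi(\alpha)[-1]\partial^\pi(m)$ in (iii) is precisely the residue of this subtlety. A secondary point to be careful about is that $\partial^\pi$ and $s$ a priori depend on the choice of uniformizer $\pi$ (hence the superscript), and one must check the formulas are compatible with the twisted/untwisted comparison $M_*(\kappa(v), \Lambda_v) \cong M_*(\kappa(v))$ induced by $\pi$; but since $\pi$ is fixed throughout the statement, this is a matter of unwinding definitions rather than a genuine difficulty. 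I expect the whole proof to amount to carefully transcribing Morel's residue calculus from the ring $\ul K_*^{MW}$ to the module $M_*$, with the only real work being the sign audit in (iii).
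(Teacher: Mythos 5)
There is a genuine gap: your argument for (ii) and (iii) is circular. You propose to obtain the module-level Leibniz rule (iii) by ``applying the ring-level Leibniz rule to $[-\pi]\cdot\alpha\cdot m$'', but $m$ lives in $M_*(K)$, not in $\ul K_*^{MW}(K)$, so Morel's residue calculus (\cite[Lemma 3.16]{A1-alg-top} and its neighbours) says nothing about such products --- it is stated and proved only for $M_* = \ul K_*^{MW}$. The entire content of the proposition is the extension of that calculus from the ring to an arbitrary homotopy module, and your plan assumes this extension rather than proving it. The same problem already infects your argument for (i): the identities $\partial^\pi([-1]m) = \partial^\pi([-1])s(m) + \dots$ and $\partial^\pi([\pi]m) = \partial^\pi([\pi])s(m)$ for $m \in M_*(\mathcal{O}_v)$ are instances of the very compatibility you have not yet established.

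What is missing is the input the paper isolates as its ``second preliminary'': for $\alpha \in \ul K_*^{MW}(\mathcal{O}_v)$ and $m \in M_*(K)$ (or the other way around) one has $\partial^\pi(\alpha m) = s(\alpha)\partial^\pi(m)$ (resp.\ $\partial^\pi(\alpha)s(m)$). This is \emph{not} a formal consequence of the ring-level calculus; it is proved by going back to the geometric construction of $\partial^\pi$ as the boundary map $H^0(K,M_*) \to H^1_v(\mathcal{O}_v,M_*)$ in cohomology with support, where linearity over sections defined over $\mathcal{O}_v$ is a general fact about boundary maps (checkable via Godement resolutions). With this in hand, part (i) is immediate from $\partial^\pi([-\pi]) = \langle -1\rangle$, and (ii), (iii) follow by a generation argument you do not have in your outline: $\ul K_*^{MW}(K)$ is generated by $\ul K_*^{MW}(\mathcal{O}_v)$ together with $[\pi]$ (by \cite[Theorem 3.22, Lemma 3.14]{A1-alg-top}), the two formulas are packaged as multiplicativity of $\Theta(a) = s^\pi(a) + \partial^\pi(a)\xi$ with values in $M_*(\kappa(v))[\xi]$, $\xi^2 = \xi[-1]$, and multiplicativity need then only be checked on the single extra generator $\alpha = [\pi]$, which is a short direct computation using $[-\pi][\pi] = 0$ and $[\pi] = [-1] + \langle -1\rangle[-\pi]$. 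Your sign bookkeeping concerns are real but secondary; without the $\mathcal{O}_v$-semilinearity statement and the generation step, the proof does not get off the ground.
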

\begin{proof}
We begin with two preliminaries. Firstly, if $M_* = \ul K_*^{MW}$, then all of
the relations hold by \cite[Lemma 3.16]{A1-alg-top}.

Secondly, suppose that
$\alpha \in \ul K_*^{MW}(\mathcal{O}_v)$ and $m \in M_*(K)$ (respectively
$\alpha \in \ul K_*^{MW}(K)$ and $m \in M_*(\mathcal{O}_v)$). Then we have
$\partial^\pi(\alpha m) = s(\alpha) \partial^\pi(m)$ (respectively
$\partial^\pi(\alpha m) = \partial^\pi(\alpha) s(m)$). To see this, one goes
back to the geometric construction of $\partial^\pi$ as in \cite[Corollary
2.35]{A1-alg-top}. That is we observe that after canonical identifications,
$\partial^\pi$ corresponds to the boundary map $\partial: H^0(K, M_*) \to
H^1_v(\mathcal{O}_v, M_*)$ in the long exact sequence for cohomology with
support. Our claim then follows from the observation that for any sheaf of rings
$K$ and $K$-module $M$, the boundary map in cohomology with support satisfies
our claim. This is easy to prove using e.g. Godement resolutions.

Statement (i) follows now from $\partial^\pi([-\pi]) = \langle -1 \rangle$.

We can prove statements (ii) and (iii) most quickly together. To do this, let $\ul
K_*^{MW}(K)[\xi]$ be the graded-$\epsilon$-commutative ring with $\xi$ in degree
one, $\xi^2 =
\xi[-1]$ and let $M_*(K)[\xi] := M_*(K) \oplus M_{*-1}(K)\xi$ be the evident $\ul
K_*^{MW}(K)[\xi]$-module. Put $\Theta: M_*(K) \to M_*(\kappa(v))[\xi],
\Theta(a) = s^\pi(a) + \partial^\pi(a) \xi$. (See \cite[Lemma
3.16]{A1-alg-top}.) Then (ii) and (iii) are equivalent to $\Theta(\alpha m) =
\Theta(\alpha) \Theta(m)$. It is sufficient to prove this on generators $\alpha$
of $\ul K_*^{MW}(K)$. Moreover (ii) and (iii) for $\alpha \in
K_*^{MW}(\mathcal{O}_v)$ hold by the second preliminary point. It follows from
\cite[Theorem 3.22, Lemma 3.14]{A1-alg-top} and standard formulas in Milnor-Witt
K-theory that $\ul K_*^{MW}(K)$ is generated by $\ul K_*^{MW}(\mathcal{O}_v)$
together with $[\pi]$. Hence to establish (ii) and (iii) we need only consider
$\alpha = \pi$.

These are easy computations: $s^\pi([\pi] m) = \partial^\pi([-\pi] [\pi] m) = 0$
because $[-\pi] [\pi] = 0$ \cite[Lemma 3.7 1)]{A1-alg-top} and also $s^\pi([\pi]) =
0$, so $s^\pi([\pi]) s^\pi(m) = 0 = s^\pi([\pi]m)$ as desired. Similarly
$\partial^\pi([\pi] m) = \partial^\pi(([-1] + \langle -1 \rangle [-\pi])m) =
[-1] \partial^\pi (m) + \langle -1 \rangle \partial^\pi([-\pi]m)$ and this is
the same as $\partial^\pi([\pi]) s^\pi(m) + s^\pi([\pi]) \partial^\pi(m) +
\partial^\pi([\pi]) [-1] \partial^\pi(m)$ because $\partial^\pi([\pi]) = 1$ and
$s^\pi([\pi]) = 0$.
\end{proof}

\subsection{Interaction of Boundary and Transfer: Proof of Lemmas
\ref{lemm:KnMmtr-bdry}, \ref{lemm:RS-resn} and \ref{lemm:A1-diff-surjective}}
\label{subsec:interaction-boundary-transfer}

Finally we come to transfers. Write $\omega$ for the canonical sheaf. We will
use it for twisting. By
\cite[Chapter 5]{A1-alg-top}, there are \emph{absolute transfers} $Tr^L_K: M_*(L,
\omega) \to M_*(K, \omega)$ for any finite extension $f: Spec(L) \to Spec(K)$ in $\mathcal{F}_k$.
These satisfy many properties, for example the projection formula
$Tr^L_K(f^*(\alpha) m) = \alpha Tr^L_K(m)$ whenever $\alpha \in \ul K_*^{MW}(K),
m \in M_*(L)$.
It follows that for a line bundle $\mathcal{L}$ on $K$ there exist natural
transfers $M_*(L, f^*(\mathcal{L}) \otimes \omega) \to M_*(K, \mathcal{L}
\otimes \omega)$. Put $\nu(f) = \omega \otimes f^*(\omega)^{-1}$. Then we find
the twisted transfer $M_*(L, \nu(f)) \to M_*(K)$. We shall need
the following result, which says that the transfers on fields extend to
arbitrary finite morphisms.

\begin{thm}[\cite{A1-alg-top}, Corollary 5.30] \label{thm:comm-tr}
Let $f: X' \to X$ be a finite morphism of essentially $k$-smooth schemes. Then
there is a commutative diagram
\begin{equation*}
\begin{CD}
M_*(X', \nu(f)) @>>> M_*(X'^{(0)}, \nu(f)) @>>> \bigoplus_{x \in X'^{(1)}} M_{*-1}(k(x), \nu(f) \otimes \Lambda_x^{X'}) \\
@VVV                   @VV{Tr}V                       @VV{Tr}V \\
M_*(X)          @>>> M_*(X^{(0)})          @>>> \bigoplus_{x \in X^{(1)}} M_{*-1}(k(x), \Lambda_x^X).
\end{CD}
\end{equation*}
\end{thm}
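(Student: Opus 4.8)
The plan is to localise, reduce to a monogenic field extension, and then to Milnor--Witt $K$-theory, where the identity is classical reciprocity. Since the two rows of the diagram are the beginning of the Rost--Schmid complex and the transfer maps are defined termwise on residue fields, commutativity of the left square is just functoriality of transfers under $Spec(k(X)) \to Spec(k(X'))$, while the right square can be checked one point $x \in X^{(1)}$ at a time. Localising $X$ at such an $x$ reduces us to the case $X = Spec(\mathcal{O}_v)$, with $\mathcal{O}_v$ an essentially $k$-smooth discrete valuation ring and $f \colon X' \to X$ finite. As $X'$ is essentially smooth it is normal, hence, being finite over a DVR, a disjoint union of spectra of semilocal Dedekind domains; using additivity of transfers in $X'$ we may assume $X'$ irreducible, $X' = Spec(B)$ with fraction field $L = k(X')$ and closed points the places $w \mid v$. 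After trivialising the twists $\nu(f)$ and $\Lambda_x$, the theorem becomes the single local identity
\[ \partial_v^\pi \circ Tr_{L/K} \;=\; \sum_{w \mid v} Tr_{k(w)/k(v)} \circ \partial_w^{\pi_w} \colon M_*(L) \longrightarrow M_{*-1}(k(v)), \qquad K := k(X). \]

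Next I would reduce to the case that $L/K$ is monogenic, by induction on $[L:K]$. If there is a proper intermediate field $K \subsetneq L' \subsetneq L$, let $\mathcal{O}'$ be the integral closure of $\mathcal{O}_v$ in $L'$ (again essentially smooth, being regular and essentially of finite type over the perfect field $k$); writing $Tr_{L/K} = Tr_{L'/K} \circ Tr_{L/L'}$ (transfers compose, \cite[Chapter 5]{A1-alg-top}), applying $\partial_v^\pi$, and using the inductive hypothesis for $L/L'$ over each localisation of $\mathcal{O}'$ and then for $L'/K$ over $\mathcal{O}_v$, together with transitivity of boundary maps and of transfers on fields, gives the claim. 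If no proper intermediate field exists then $L = K(\theta)$ is monogenic, and one uses Morel's construction of the transfer: one embeds $Spec(L) \hookrightarrow \mathbb{A}^1_{\mathcal{O}_v} \subset \mathbb{P}^1_{\mathcal{O}_v}$ via $\theta$, identifies $Tr_{L/K}$ with (minus) the residue at infinity in the sequence coming from homotopy invariance on $\mathbb{P}^1$, and then the compatibility with $\partial_v$ follows from vanishing of the total residue on $\mathbb{P}^1$ over both $K$ and $k(v)$ --- a bookkeeping argument with residues in the universal situation on $\mathbb{P}^1_{\mathcal{O}_v}$.

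Within the monogenic case the essential reduction is from a general homotopy module $M_*$ to $M_* = \ul K_*^{MW}$. For this I would use the $\ul K_*^{MW}$-module structure on $M_*$, the projection formula $Tr(f^*(\alpha)m) = \alpha\,Tr(m)$, and the multiplicativity of $s^\pi$ and the twisted Leibniz rule for $\partial^\pi$ from Proposition \ref{prop:s-partial-formulas} (the homomorphism $\Theta$): expanding an arbitrary element of $M_*(L)$ in terms of Milnor--Witt symbols acting on a single module element, exactly as in the proof of Proposition \ref{prop:s-partial-formulas}, both sides of the desired identity become the same expression once one knows the corresponding statement for symbols, i.e. for $\ul K_*^{MW}$ itself. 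For $\ul K_*^{MW}$ the identity is exactly the compatibility of Milnor--Witt residues with transfers established by Morel \cite[Chapter 4]{A1-alg-top}, which on the associated graded is Scharlau's reciprocity law for Scharlau transfers (compare the discussion around Lemma \ref{lemm:inj-p}).

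The step I expect to be the main obstacle is the twist bookkeeping together with inseparable residue extensions: one must match $\nu(f) \otimes \Lambda_{x'}^{X'}$ with $\Lambda_x^X$ coherently along all $w \mid v$ through every reduction, and one cannot assume $k(w)/k(v)$ separable --- purely inseparable extensions really do occur over a perfect $k$ --- so the monogenic reduction and the $\mathbb{P}^1_{\mathcal{O}_v}$-residue computation must be arranged to cover that case as well. These are precisely the points at which Morel's careful set-up in \cite[Chapter 5]{A1-alg-top} is needed, and the honest proof is the one given there.
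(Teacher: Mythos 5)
First, note that the paper does not prove this statement at all: it is quoted verbatim as Corollary 5.30 of Morel's book \cite{A1-alg-top}, and the only text following it explains what the vertical maps are (the left vertical arrow being \emph{defined} by commutativity, using injectivity of the left horizontal maps). So your proposal is attempting something the paper deliberately outsources. Your overall geometry — localise at a point of $X^{(1)}$, reduce to a semilocal Dedekind situation, handle monogenic extensions via the embedding into $\mathbb{P}^1_{\mathcal{O}_v}$ and a total-residue argument — is indeed in the spirit of Morel's actual proof. But there is a genuine gap in your central reduction step, the passage from a general homotopy module $M_*$ to $\ul K_*^{MW}$. You propose ``expanding an arbitrary element of $M_*(L)$ in terms of Milnor--Witt symbols acting on a single module element, exactly as in the proof of Proposition \ref{prop:s-partial-formulas}.'' That is not what that proof does, and it cannot be done: the proposition only establishes formulas for products $\alpha m$ with $\alpha \in \ul K_*^{MW}$, reducing to generators \emph{of $\ul K_*^{MW}$}, whereas a general element of $M_*(L)$ admits no presentation as a sum of symbols applied to elements over the base (or to any preferred elements). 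Moreover the transfer $Tr_{L/K}$ on $M_*$ is extra structure, not determined by the $\ul K_*^{MW}$-module structure together with the projection formula --- the projection formula only constrains $Tr$ on elements of the form $f^*(\alpha)m$, which do not span $M_*(L)$. So the identity $\partial_v^\pi \circ Tr_{L/K} = \sum_{w\mid v} Tr \circ \partial_w$ cannot be deduced from its validity for $\ul K_*^{MW}$ by this mechanism, and Scharlau reciprocity does not enter.

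The way the statement is actually proved (and the reason the paper can simply cite it) is that Morel's absolute transfers for monogenic extensions are themselves defined geometrically, via the Rost--Schmid complex of $M_*$ with supports on $\Aone_K \subset \mathbb{P}^1_K$; the compatibility with residues is then a statement about the Rost--Schmid complex of $M_*$ itself on the two-dimensional scheme $\mathbb{P}^1_{\mathcal{O}_v}$ (in essence that the differential squares to zero, plus homotopy invariance to kill the contribution at infinity), which holds uniformly in $M_*$ with no reduction to symbols. Your other reductions (induction over intermediate fields, treatment of the twists $\nu(f)\otimes\Lambda_x$, inseparable residue extensions) are the right concerns and are handled in \cite[Chapter 5]{A1-alg-top}, but as you concede yourself, your sketch ultimately defers to that source at exactly the point where the work lies; with the flawed $\ul K_*^{MW}$-reduction removed, what remains is a citation rather than a proof, which is in effect what the paper gives.
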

Here the middle vertical arrow is the transfer just defined. If $x \in X'^{(1)}$
then we have $\nu(f) \otimes \Lambda_x^{X'} \wequi \omega \otimes
f^*(\omega_X)^{-1}$ and hence there is a natural transfer $M_{*-1}(k(x), \nu(f)
\otimes \Lambda_x^{X'}) \to M_{*-1}(k(f(x)), \Lambda_{f(x)}^X)$. The right
vertical map is just the evident sum of such transfers. The left horizontal maps
are injective, so the theorem asserts that the right square is commutative and
that the unique vertical map on the left making the diagram commutative exists.

We now put \[(\ul K_n^{MW} M_m)^{tr}(K) = \left\langle Tr^L_K (\ul
K_n^{MW}(L, \nu(L/K)) M_m(L)) \right\rangle_{L/K}.\]
This is the ``correct'' definition of
$(\ul K_n^{MW} M_m)^{tr}(K),$ using twisted absolute transfers.
But note that if we just choose some isomorphisms
$\omega_K \wequi K$ and $\omega_L \wequi L$, we get the composite
$K_n(L)M_m(L) \subset M_{n+m}(L) \wequi M_{n+m}(L, \omega_L) \xrightarrow{Tr} M_{m+n}(K,
\omega_K) \wequi M_{m+n}(K)$ and this has the same image as the ``correct''
transfer, so the twists do not actually matter in some sense. (Note also that in
the definition of $(\ul K_n^{MW} M_m)^{tr}(K)$, it makes no difference if we
view the right hand side as the subgroup generated or the sub-$GW(K)$-module
generated. This follows from the projection formula.)

\begin{lemm*}[\ref{lemm:KnMmtr-bdry}]
Let $\mathcal{O}_v \subset K$ be an excellent dvr with uniformizer $\pi$.
Then $\partial^\pi (\ul K_n^{MW} M_m)^{tr}(K) \subset (\ul K_{n-1}^{MW}
M_m)^{tr}(\kappa(v)) \subset M_{n+m-1}(\kappa(v))$, for each $n>0$.
\end{lemm*}
\begin{proof}[Proof of Lemma \ref{lemm:KnMmtr-bdry}.]
Let $f: Spec(L) \to Spec(K)$ be a finite extension and $A$ the integral closure of
$\mathcal{O}_v$ in
$L$. Then $A$ is semilocal, smooth over $k$ (indeed $A$ is normal and
one-dimensional) and finite over
$\mathcal{O}_v$ \cite[Theorem 2.39(d)]{liu2002algebraic}. Let $x_1, \dots,
x_n$ be the closed points. From Theorem
\ref{thm:comm-tr} we get the commutative diagram
\begin{equation*}
\begin{CD}
M_{n+m}(L, \nu(f)) @>>> \bigoplus_i M_{n+m-1}(k(x_i), \Lambda_{x_i}^A \otimes \nu(f)) \\
@VV{Tr}V            @VV{\Sigma_i Tr}V \\
M_{n+m}(K) @>>> M_{m+n-1}(\kappa(v), \Lambda_v^{\mathcal{O}_v}).
\end{CD}
\end{equation*}
The choice of $\pi$ trivialises $\Lambda_v^{\mathcal{O}_v}$ and the bottom
horizontal arrow becomes $\partial^\pi$. Let $\alpha \in \ul K_n^{MW}(L, \nu(f))$ and $m
\in M_m(L)$. Then $\partial^\pi(Tr(\alpha m)) = \Sigma_i Tr(\partial_i(\alpha))
m)$ and we see that it is enough to prove that $\partial_i(\ul K_n^{MW}(L, \nu(f)) M_m(L))
\subset \ul K_{n-1}^{MW}(k(x_i), \Lambda_{x_i}^A \otimes \nu(f)) M_m(k(x_i))$.
We note that $\nu(f)$ is (non-canonically) trivial so we can ignore that twist.

We may thus forget about the extension $L/K$ and just prove that
$\partial^\pi(\ul K_n^{MW}(K) M_m(K)) \subset K_{n-1}(K) M_m(K)$ (renaming $L$
to $K$). But this
follows from Proposition \ref{prop:s-partial-formulas} part (iii) and the fact
that $\ul K_{> 0}^{MW}(K)$ is generated in degree 1.
\end{proof}

\begin{lemm*}[\ref{lemm:RS-resn}]
Let $X$ be the Henselization of a smooth variety over $k$ in a point of
codimension one. Then the differential
\[ (\ul K_n^{MW} M_m)^{tr} (X^{(0)})
    \xrightarrow{\partial}
    \bigoplus_{x \in X^{(1)}} (\ul K_{n-1}^{MW} M_m)^{tr}(k(x), \Lambda_x^X) \]
is surjective.
\end{lemm*}
\begin{proof}[Proof of Lemma \ref{lemm:RS-resn}.]
Let $X$ have generic point $\xi$ and closed point $x$.
We need to prove that $\partial: (\ul K_n^{MW} M_m)^{tr}(\xi) \to (\ul K_{n-1}^{MW}
M_m)^{tr}(\kappa(x), \Lambda_x^X)$ is surjective. Let $l/\kappa(x)$ be a finite extension.
Writing this as a sequence of simple extensions and lifting minimal polynomials
of generators (which remain irreducible by Gauss' lemma),
we can find a finite extension $L/K$ with $[L:K] = [l/\kappa(x)]$
such that if $f: \tilde{X} \to X$ is the integral closure of $X$ in $L$ then $l$ is one
of the residue fields of $\tilde{X}$. Note that $\tilde{X}$ is connected
($\mathcal{O}_{\tilde{X}} \subset L$ is a domain), essentially $k$-smooth (since
normal) and
finite over $X$ \cite[Theorem 2.39(d)]{liu2002algebraic} ($X$ is excellent
\cite[Corollaire 18.7.6]{EGAIV}),
so is in fact Nisnevich local (i.e. the spectrum of a Henselian
local ring) \cite[Tag 04GH]{stacks-project}. Let $\tilde{x}$ be the unique point above $x$, so that
$l = \kappa(\tilde{x})$.
From Theorem \ref{thm:comm-tr} and Lemma \ref{lemm:KnMmtr-bdry} we get the commutative diagram
\begin{equation*}
\begin{CD}
(\ul K_n^{MW} M_m)(k(\tilde{X}), \nu(f)) @>>> (\ul K_{n-1}^{MW} M_m)(\kappa(\tilde{x}), \nu(f) \otimes \Lambda_{\tilde{x}}^{\tilde{X}}) \\
@VV{Tr}V                               @VV{Tr}V \\
(\ul K_n^{MW} M_m)^{tr}(k({X})) @>>> (\ul K_{n-1}^{MW} M_m)^{tr}(\kappa(x), \Lambda_x^X). \\
\end{CD}
\end{equation*}
Observing that $\nu(f)$ is (non-canonically) trivial (and that $l$ was
arbitrary)
it is thus enough to show that $(\ul K_n^{MW} M_m)(\xi) \to \ul
K_{n-1}(\kappa(x), \Lambda_x^X) M_m(\kappa(x))$ is surjective (renaming $\tilde{X}$ to $X$).

Since $k$ is perfect $\kappa(x)/k$ is smooth. It is then not hard to see (using
the fact that $X$ is Henselian) that $x \to X$ admits a section, and so $s: M_m(X)
\to M_m(\kappa(x))$ is surjective. Also $\partial: \ul K_n^{MW}(\xi) \to \ul
K_{n-1}(\kappa(x), \Lambda_x^X)$ is surjective because $\ul K_*^{MW}$ is a homotopy
module. So for $\alpha \in \ul K_{n-1}^{MW}(\kappa(x), \Lambda_x^X)$ and $m \in
M_m(\kappa(x))$ we can find $\alpha' \in \ul K_n^{MW}(\xi)$ and $m' \in K_m(X)$
with $\partial(\alpha') = \alpha, s(m') = m$ and thus $\partial(\alpha' m') =
\alpha m$ by a (very) special case of Proposition \ref{prop:s-partial-formulas}
part (iii).

This concludes the proof.
\end{proof}

\begin{lemm*}[\ref{lemm:A1-diff-surjective}]
Let $K \in \mathcal{F}_k$.
The differential
\[ (\ul K_n^{MW} M_m)^{tr}(K(T)) \to \bigoplus_{x \in
(\Aone_K)^{(1)}} (\ul K_{n-1}^{MW} M_m)^{tr}(k(x), \Lambda_x^{\Aone})\] is surjective.
\end{lemm*}
\begin{proof}[Proof of Lemma \ref{lemm:A1-diff-surjective}.]
Fix $x_0 \in (\Aone_K)^{(1)}$. It suffices to prove that given $\alpha \in (\ul
K_{n-1}^{MW} M_m)^{tr}(\kappa(x_0), \Lambda_{x_0}^{\Aone})$ there exists $\beta \in (\ul K_n^{MW}
M_m)(K(T))$ such that $\partial_{x_0}(\beta) = \alpha$ and $\partial_y(\beta) =
0$ for $y \ne x_0$. We may suppose that $\alpha$ is obtained by transfer from a
finite extension $L/\kappa(x)$. Then $L/K$ is also finite.

Letting $f: \Aone_L \to \Aone_K$ be the canonical map, as
before we get a commutative diagram
\begin{equation*}
\begin{CD}
(\ul K_n^{MW} M_m)(L(T), \nu(f)) @>>> \bigoplus_{y \in (\Aone_L)^{(1)}} (\ul K_{n-1}^{MW} M_m)(k(y), \nu(f) \otimes \Lambda_y^{\Aone}) \\
   @VV{Tr}V                                @VV{Tr}V    \\
(\ul K_n^{MW} M_m)^{tr}(K(T)) @>>> \bigoplus_{x \in (\Aone_K)^{(1)}} (\ul K_{n-1}^{MW} M_m)^{tr}(k(x), \Lambda_x^{\Aone}). \\
\end{CD}
\end{equation*}
We note that there is at least one point $y_0 \in \Aone_L$ above $x_0 \in
\Aone_K$ with $\kappa(y_0) = \kappa(x_0)$ and that $\nu(f)$ is (non-canonically)
trivial. Hence it suffices to prove: given
$y_0 \in (\Aone_L)^{(1)}$ with $\kappa(y) = L$ and $\alpha \in (\ul K_{n-1}^{MW}
M_m)(\kappa(y_0), \Lambda_{y_0}^{\Aone})$ there exists $\beta \in (\ul K_n^{MW}
M_m)(L(T))$ with $\partial_{y_0}(\beta) = \alpha$ and $\partial_y(\beta) = 0$
for $y \ne y_0$.

We may assume that $\alpha = c m$ with $c \in \ul K_{n-1}^{MW}(\kappa(y_0),
\Lambda_{y_0}^{\Aone})$ and $m \in M_m(\kappa(y_0))$. Since $\ul K_*^{MW}$ is a
homotopy module there exists $c' \in \ul K_n^{MW}(L(T))$ with
$\partial_{y_0}(c') = c$ and $\partial_y(c') = 0$ for $y \ne y_0$. Moreover
since $\kappa(y_0) = \kappa(y)$ the natural map $M_m(L) \to M_m(\kappa(y))$ is
an isomorphism and we may choose $m' \in M_m(L)$ mapping to $m$. Then $m' \in
M_m(\mathcal{O}_{\Aone_L, y})$ for all $y$ and hence $\partial_{y}(c' m') =
\partial_y(c') s(m')$ for all $y$ (including $y_0$). Thus
$\partial_{y_0}(c'm') = cm$ and $\partial_y(c'm') = 0$ for $y \ne y_0$. This
concludes the proof.
\end{proof}

\bibliographystyle{plainc}
\bibliography{bibliography}

\end{document}